\providecommand\@dotsep{5}
\def\listtodoname{List of Todos}
\def\listoftodos{\@starttoc{tdo}\listtodoname}
\newtheorem{theorem}{Theorem}[section]
\newtheorem{proposition}[theorem]{Proposition}
\newtheorem{corollary}[theorem]{Corollary}
\newtheorem{lemma}[theorem]{Lemma}
\newtheorem{conjecture}[theorem]{Conjecture}
  \theoremstyle{definition}
\newtheorem{example}[theorem]{Example}
\newtheorem{remark}[theorem]{Remark}
\newcommand{\dbE}{\mathbb{E}}
\newcommand{\dbH}{{\mathbb H}}
\newcommand{\dbK}{\mathbb{K}}
\newcommand{\dbR}{\mathbb{R}}
\newcommand{\dbS}{\mathbb{S}}
\newcommand{\dbZ}{\mathbb{Z}}
\newcommand{\calA}{{\mathcal A}}
\newcommand{\calF}{{\mathcal F}}
\newcommand{\calG}{{\mathcal G}}
\newcommand{\calH}{{\mathcal H}}
\newcommand{\calI}{{\mathcal I}}
\newcommand{\calK}{{\mathcal K}}
\newcommand{\calM}{{\mathcal M}}
\newcommand{\calV}{{\mathcal V}}
\newcommand{\calW}{{\mathcal W}}
\newcommand{\ktheory}{$K$-theory }
\newcommand{\KR}{\mathbb{K}_R}
\newcommand{\vcyc}{V\text{\tiny{\textit{CYC}}}}
\newcommand{\fin}{F\text{\tiny{\textit{IN}}}}
\newcommand{\all}{A\text{\tiny{\textit{LL}}}}
\newcommand{\func}[3]{#1:#2\to#3}
\newcommand{\beq}{\begin{equation}}
\newcommand{\eeq}{\end{equation}}
\newcommand{\barG}{\overline{G}}
\newcommand{\evc}{\underline{\underline{E}}}
\newcommand{\efin}{\underline{E}}
\newcommand{\kleingp}{\dbZ\rtimes \dbZ}
\theoremstyle{theorem}
\newtheorem*{relfinite}{Theorem}
\newtheorem*{relvc}{Theorem}
\begin{document}

\title[K-theory of 3-manifold groups]{On the algebraic K-theory of 3-manifold groups}

\author{Daniel Juan-Pineda}
\address{Centro de Ciencias Matem\'aticas\\
Universidad Nacional Aut\'onoma de M\'exico,
Campus Morelia\\
Morelia, Michoac\'an, Mexico 58089}
\email{daniel@matmor.unam.mx}


\subjclass[2020]{Primary 19A31, 19B28, 14C35,57K32,57K31, 05C25,16S34}

\author{Luis Jorge S\'anchez Salda\~na}
\address{Departamento de Matem\' aticas, Facultad de Ciencias, Universidad Nacional Aut\'onoma de M\'exico, Circuito Exterior S/N, Cd. Universitaria, Colonia Copilco el Bajo, Delegaci\'on Coyoac\'an, 04510, M\'exico D.F., Mexico}
\email{luisjorge@ciencias.unam.mx}

\date{}


\keywords{$K$-Theory, Farrell-Jones conjecture, 3-manifold groups, Whitehead groups}

\begin{abstract}
We provide descriptions of the Whitehead  groups, and the algebraic $K$-theory groups, of the fundamental group of a connected, oriented, closed $3$-manifold in terms of Whitehead groups of their finite subgroups and certain Nil-groups. The main tools we use are: the K-theoretic Farrell-Jones isomorphism conjecture, the construction of  models for the universal space for the family of virtually cyclic subgroups in 3-manifold groups, and both the prime and JSJ-decompositions together with the well-known geometrization theorem.
\end{abstract}
\maketitle

\section{Introduction}

Let $M$ be a connected, closed, oriented $3$-manifold, and let $\Gamma=\pi_1(M)$ be the fundamental group of $M$. We say that $\Gamma$ is a \emph{$3$-manifold group}.


Let $R$ be a ring, for many years, there has been a formidable development in the study of the algebraic $K$-theory of $R$.
The case of group rings $R[\Gamma]$ has strong connections with geometry and topology and their understanding requires deep knowledge
of the group and its geometric properties. F. T. Farrell and L. Jones established in \cite{FJ93} their fundamental conjecture to try
to understand these $K$ groups for group rings. Roughly, they proposed that the groups $K_*(R[\Gamma])$ should be determined 
by the universal space for actions with virtually cyclic isotropy, $\evc \Gamma$, and homological information, see section \ref{prel} for details.

Calculations of Whitehead groups for 3-manifold groups have been a matter of study for a long time. For example, F. Waldhausen proved they vanish for certain family of 3-manifold groups in \cite{Wa78}. S. K. Roushon \cite[Theorem~1.2]{Ro11} and F. T. Farrell and L. Jones \cite{FJ85} showed that $Wh_n(\Gamma)\otimes  \dbZ[1/N]=0$, for all non-negative numbers, where $N=[(n+1)/2]!$ when $\Gamma$ is a torsion free  3-manifold group. In particular, $\widetilde K_0(\dbZ\Gamma)=Wh(\Gamma)=Wh_2(\Gamma)=0$. Moreover, $K_n(\dbZ[\Gamma])=0$ for all negative integers $n$. 

In \cite{JLSS} Joecken, Lafont and Sánchez Saldaña, computed the virtually cyclic dimension of a $3$-manifold group $\Gamma$. To achieve this, they constructed explicit models for $\underline{\underline{E}}\Gamma$. On the other hand, after the confirmation of Thurston geometrization conjecture, it is now well established that the Farrell-Jones isomorphism conjecture is valid for 3-manifold groups (see \cite[Theorem 2-(6)]{KLR16}). Armed with this knowledge, it is natural to try to use the Farrell-Jones isomorphism conjecture and the aforementioned models for $\underline{\underline{E}}\Gamma$ to compute the algebraic K-theory groups of $\dbZ \Gamma$ and the Whitehead groups of $\Gamma$. The main goal of this paper is to carry out this task.

In this paper, we provide formulas for the Whitehead groups $Wh^R_{i}(R[\Gamma])$, a direct factor of $K_i(R[\Gamma])$ where $\Gamma$ is a $3$-manifold group possibly with torsion and  $R$ is any ring with unitary element. As a byproduct we describe a summand of $K_i(R[\Gamma])$, explicitly, the summand given by the relative homology groups $H^{\Gamma}_*(\underline{\underline{E}}\Gamma,\underline{E}\Gamma; \dbK_R)$, see Section \ref{prel} for their definition.  Our main tools to carry out with these computations are: the $K$-theoretic Farrell-Jones isomorphism conjecture, the Knesser-Milnor and JSJ-de\-compo\-sitions, and the models for $\evc \Gamma$ constructed in \cite{JLSS}. 

We now outline more explicitly our strategy and state the main results of this paper. Let $\Gamma$ be a 3-manifold group, and let $\evc \Gamma$, $\efin \Gamma$ be models for the universal space with virtually cyclic isotropy and finite isotropy, respectively. By the veracity of the Farrell-Jones isomorphism conjecture for $\Gamma$ we have isomorphisms for $K$-theory and Whitehead groups
 \begin{align*}
K_*(R[\Gamma])\cong H^\Gamma_*(\underline{E}\Gamma; \dbK_R)\oplus H^\Gamma_*(\underline{\underline{E}}\Gamma,\underline{E}\Gamma; \dbK_R)\\
Wh^R_*(\Gamma) \cong H^\Gamma_*(\underline{E}\Gamma, E\Gamma; \dbK_R)\oplus H^\Gamma_*(\underline{\underline{E}}\Gamma,\underline{E}\Gamma; \dbK_R).
\end{align*}

 The main purpose of this work is to describe the relative terms $H^{\Gamma}_*(\underline{E}\Gamma, E\Gamma; \dbK_R)$ and $H^\Gamma_*(\underline{\underline{E}}\Gamma,\underline{E}\Gamma; \dbK_R)$. Our main theorems are as follows.
 
 In the following theorem we describe the first direct factor of $Wh_i^R(\Gamma)$.
 
 \begin{relfinite}[Relative terms for finites,  \cref{finiterel}]
Let $M$ be a closed, oriented, connected  3-manifold, and let $\Gamma$ be the fundamental group of $M$.  Let \[M=Q_1\#\cdots\#
Q_n\#P_1\#\cdots\#
P_m\] be the prime decomposition of $M$ such that  $Q_i$, $1\leq j \leq n$, are exactly those manifolds (if any) in the decomposition that are spherical. Denote $\Gamma_j=\pi_1(Q_j)$. Then, for all $i\in \dbZ$
\[
H^{\Gamma}_i(\underline{E}\Gamma, E\Gamma; \dbK_R)\cong \bigoplus_{j=1}^{n} Wh_i^R(\Gamma_j). 
\]
Equivalently, the splitting of $H^{\Gamma}_i(\underline{E}\Gamma, E\Gamma; \dbK_R)$ as a direct sum runs over the conjugacy classes of maximal finite subgroups of $\Gamma$.
In particular the \emph{classical} Whitehead group $Wh(\Gamma)=Wh_1(\Gamma)$ of $\Gamma$ is always a finitely generated abelian group.
\end{relfinite}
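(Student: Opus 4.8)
The plan is to reduce the computation of $H^\Gamma_i(\underline{E}\Gamma, E\Gamma; \dbK_R)$ to a sum indexed by conjugacy classes of maximal finite subgroups, and then to identify these maximal finite subgroups concretely via the prime decomposition. The first reduction is a standard technique for relative assembly maps: since $E\Gamma \to \underline{E}\Gamma$ is a map between models for the family $\{1\}$ and $\calFIN$, the relative term $H^\Gamma_i(\underline{E}\Gamma, E\Gamma; \dbK_R)$ is built (e.g.\ via a Mayer--Vietoris or pushout argument, or the induction structure of equivariant homology) from contributions of the finite subgroups. More precisely, one uses the fact that a model for $\underline{E}\Gamma$ can be constructed by attaching cells along the maximal finite subgroups, so that the relative homology decomposes as $\bigoplus_{[F]} \big(H^F_i(E F, EF;\dbK_R)\big)$ induced up, where $[F]$ ranges over conjugacy classes of maximal finite subgroups, and this in turn is exactly $\bigoplus_{[F]} Wh^R_i(F)$ by definition of the Whitehead groups.

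The second step is to identify the maximal finite subgroups of a $3$-manifold group $\Gamma$. Here I would invoke geometrization together with the Kneser--Milnor prime decomposition $M = Q_1 \# \cdots \# Q_n \# P_1 \# \cdots \# P_m$, so that $\Gamma = \Gamma_1 * \cdots * \Gamma_n * \pi_1(P_1) * \cdots * \pi_1(P_m)$ (modulo the usual care with $S^2 \times S^1$ summands, which contribute $\dbZ$ free factors and no torsion). By the Kurosh subgroup theorem, every finite subgroup of a free product is conjugate into one of the free factors; the aspherical summands $P_k$ have torsion-free fundamental group, so the only free factors containing nontrivial torsion are the spherical ones $\Gamma_j = \pi_1(Q_j)$, which are themselves finite. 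Thus each $\Gamma_j$ is a maximal finite subgroup, these exhaust the conjugacy classes of maximal finite subgroups, and no two distinct $\Gamma_j$ are conjugate (they are distinct free factors). This yields the stated isomorphism $H^\Gamma_i(\underline{E}\Gamma, E\Gamma; \dbK_R) \cong \bigoplus_{j=1}^n Wh^R_i(\Gamma_j)$ and its reformulation in terms of maximal finite subgroups.

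For the final sentence about the classical Whitehead group, I would specialize to $R = \dbZ$ and $i = 1$: since each $\Gamma_j$ is a finite group, $Wh(\Gamma_j) = Wh_1(\dbZ[\Gamma_j])$ is a finitely generated abelian group by the classical theorem of Bass (finite generation of $Wh$ of a finite group), and a finite direct sum of finitely generated abelian groups is finitely generated. One should also note here that the prime decomposition has only finitely many factors, so the index set is finite.

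The main obstacle I anticipate is the first step: carefully justifying that the relative term $H^\Gamma_i(\underline{E}\Gamma, E\Gamma; \dbK_R)$ splits as the claimed direct sum over conjugacy classes of maximal finite subgroups. This requires knowing that $\Gamma$ satisfies the relevant structural hypothesis — essentially that distinct maximal finite subgroups intersect trivially and are self-normalizing, or at least that the poset of finite subgroups is controlled enough to apply the pushout construction of $\underline{E}\Gamma$ from $E\Gamma$. For $3$-manifold groups this follows from the free-product structure above (a nontrivial finite subgroup lies in a unique conjugate of a unique spherical free factor, and is normalized only within that factor), but the bookkeeping with conjugacy and the degenerate $S^2\times S^1$ and $\dbR P^3 \# \dbR P^3$ cases needs to be handled with some care.
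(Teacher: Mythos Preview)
Your proposal is correct and follows essentially the same route as the paper: the paper first establishes that $\Gamma$ satisfies properties (M) and (NM) (exactly the ``self-normalizing, trivially intersecting'' condition you flag as the main obstacle), then invokes the standard pushout/splitting argument from \cite{DL03,BSS,BJPP01} to get $H^\Gamma_i(\underline{E}\Gamma, E\Gamma; \dbK_R) \cong \bigoplus_{F\in\calM} Wh_i^R(F)$, and finally identifies $\calM$ with the spherical factors via the prime decomposition, just as you do. One small remark on your last paragraph: the ``in particular'' claim concerns the full $Wh(\Gamma)$, which has a second summand $H^\Gamma_1(\underline{\underline{E}}\Gamma,\underline{E}\Gamma;\dbK_\dbZ)$; you should note that this vanishes because $\dbZ$ is regular (so all the relevant Nil-groups are zero), not just that the first summand is finitely generated.
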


As for the second relative term $H^\Gamma_*(\underline{\underline{E}}\Gamma, \underline{E}\Gamma)$ of $Wh_i^R(\Gamma)$ we proceed en several steps.
First, assuming $\Gamma=\pi_1(M)$ is any 3-manifold group, we describe $H^\Gamma_*(\underline{\underline{E}}\Gamma, \underline{E}\Gamma)$ by means of a long exact sequence in terms of the corresponding relative terms of the fundamental groups of the prime factors $M$, and certain Nil-groups. This is done in the following theorem.


\begin{relvc}[Relative terms for virtually cyclics: prime case \cref{reltermprimedecomposition}]
Let $\Gamma=\pi_1(M)$ be a $3$-manifold group. Consider the prime decomposition $M=P_1\#\cdots \# P_m$ and the corresponding splitting $\Gamma\cong \Gamma_1 *\cdots * \Gamma_m$, where $\Gamma_i=\pi_1(P_i)$. Then the relative term $H^\Gamma_*(\underline{\underline{E}}\Gamma, \underline{E}\Gamma)$ fits in the long exact sequence 
\begin{align*}
 \cdots \to  \bigoplus_{i=1}^m H_n^{\Gamma_i}(\underline{\underline{E}} \Gamma_i, \underline{E}\Gamma_i)
 &\to H_n^\Gamma(\underline{\underline{E}}\Gamma, \underline{E}\Gamma) \to\\
 &\to \bigoplus_{\calH_1}2NK_n(R)\oplus \bigoplus_{\calH_2}NK_n(R) \to  \bigoplus_{i=1}^m H_{n-1}^{\Gamma_i}(\underline{\underline{E}} \Gamma_i, \underline{E}\Gamma_i)
   \to \cdots
 \end{align*}
where $\calH_1$ (resp. $\calH_2$) is a set of representatives of $\Gamma$-conjugacy classes of maximal elements in $\vcyc\setminus\calV$ that are isomorphic to $\dbZ$ (resp. $D_\infty$), and $\calV$ is the family of virtually cyclic subgroups of $\Gamma$ that are subconjugated to some $\Gamma_i$. 
\end{relvc}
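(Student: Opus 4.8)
The plan is to exploit the compatibility of the equivariant homology theory $H^\Gamma_*(-;\dbK_R)$ with the free-product decomposition $\Gamma \cong \Gamma_1 * \cdots * \Gamma_m$ coming from the prime decomposition. The key geometric input is that a free product acts on a tree $T$ (the Bass–Serre tree of the graph-of-groups decomposition with vertex groups $\Gamma_i$ and trivial edge groups), so there is a $\Gamma$-pushout expressing $\underline{E}\Gamma$ (and likewise $\underline{\underline{E}}\Gamma$) out of the spaces $\Gamma \times_{\Gamma_i} \underline{E}\Gamma_i$, the edge pieces $\Gamma \times_{\{1\}} \underline{E}\{1\} = \Gamma$, and $T$ itself. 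Applying $H^\Gamma_*(-;\dbK_R)$ to such a pushout yields a Mayer–Vietoris sequence, and then the relative version for the pair $(\underline{\underline{E}}\Gamma, \underline{E}\Gamma)$ is obtained by comparing the two Mayer–Vietoris sequences (for finite and for virtually cyclic isotropy) via the five lemma / a diagram chase, or more cleanly by working directly with the cofiber spectra.

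**Identifying the error term.** The term $\bigoplus_{\calH_1} 2NK_n(R) \oplus \bigoplus_{\calH_2} NK_n(R)$ must come from the virtually cyclic subgroups of $\Gamma$ that are \emph{not} conjugate into any vertex group $\Gamma_i$ — equivalently those that act on the tree $T$ without a fixed point, i.e.\ with an invariant axis or an invariant pair of ends. Since edge stabilizers are trivial, such a virtually cyclic subgroup $H$ maps injectively to $\dbZ$ or surjects onto $D_\infty$ with trivial kernel, so $H \cong \dbZ$ or $H \cong D_\infty$; these are the families $\calH_1$ and $\calH_2$. The contribution of each such maximal $H$ to $H^\Gamma_*(\underline{\underline{E}}\Gamma, \underline{E}\Gamma)$ is, by the general analysis of the transfer reducibility / the structure of $\underline{\underline{E}}\Gamma$ relative to $\underline{E}\Gamma$ (as in the work on the Farrell–Jones conjecture and Nil-terms, e.g.\ the splitting results of Davis–Quinn–Reich and Bartels–Lück), the reduced $K$-theory of the corresponding Waldhausen/Farrell Nil-category: for $H \cong \dbZ \cong \{1\}\rtimes_{\mathrm{id}} \dbZ$ this is $2NK_n(R) = NK_n(R)\oplus NK_n(R)$, and for $H\cong D_\infty = \{1\} *_{\{1\}} \{1\}$ this is $NK_n(R)$ — exactly the Waldhausen Nil of an amalgam over trivial groups. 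I would cite the relevant computation of $H^H_*(\underline{\underline{E}}H,\underline{E}H;\dbK_R)$ for $H=\dbZ$ and $H=D_\infty$ (which is standard) and then assemble the global answer by summing over conjugacy classes of maximal such $H$, using that distinct maximal infinite virtually cyclic non-elliptic subgroups have ``disjoint'' contributions.

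**The exact sequence.** Concretely I would proceed as follows. First, write down the $\Gamma$-pushout for $\underline{E}\Gamma$ associated to the action on $T$ and the analogous one for $\underline{\underline{E}}\Gamma$, then pass to the mapping cone to get a $\Gamma$-space/spectrum whose $\dbK_R$-homology is $H^\Gamma_*(\underline{\underline{E}}\Gamma,\underline{E}\Gamma)$. Second, use the induction structure of the equivariant homology theory to rewrite $H^\Gamma_*(\Gamma\times_{\Gamma_i} X)$ as $H^{\Gamma_i}_*(X)$; the edge contributions involve $\Gamma\times_{\{1\}}(\text{pt})$, whose equivariant homology is the (non-equivariant) $\dbK_R$-homology of a point, and these cancel between the $\underline{E}$ and $\underline{\underline{E}}$ layers \emph{except} for the Nil-discrepancy concentrated at the non-elliptic virtually cyclics — this is the heart of the matter. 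Third, reorganize the resulting long exact sequence so that the vertex contributions assemble to $\bigoplus_i H^{\Gamma_i}_*(\underline{\underline{E}}\Gamma_i,\underline{E}\Gamma_i)$ and the remaining third term is precisely $\bigoplus_{\calH_1} 2NK_n(R)\oplus\bigoplus_{\calH_2}NK_n(R)$, with connecting maps as claimed.

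**Main obstacle.** The hard part will be step two: correctly matching up the virtually cyclic subgroups of the free product that are not subconjugate to a vertex group with the sets $\calH_1,\calH_2$, and showing that their collective contribution to the relative term is exactly the sum of Nil-groups with no cross-terms or extension problems. One must verify (i) that every maximal element of $\vcyc\setminus\calV$ really is isomorphic to $\dbZ$ or $D_\infty$ — this uses triviality of edge stabilizers in the prime decomposition, so that no finite group survives in an edge and a non-elliptic virtually cyclic subgroup is torsion-free or virtually $\dbZ$ over a trivial kernel; (ii) that the ``new'' part of $\underline{\underline{E}}\Gamma$ not already in $\underline{E}\Gamma$ decomposes, up to $\Gamma$-homotopy, as a disjoint union indexed by $\calH_1\sqcup\calH_2$ of pieces of the form $\Gamma\times_{H}(\underline{\underline{E}}H,\underline{E}H)$ — this is where the explicit models for $\underline{\underline{E}}\Gamma$ from \cite{JLSS} (or the general push-out construction of Lück–Weiermann) enter; and (iii) the identification $H^H_*(\underline{\underline{E}}H,\underline{E}H;\dbK_R)\cong 2NK_*(R)$ for $H\cong\dbZ$ and $\cong NK_*(R)$ for $H\cong D_\infty$, which is a direct appeal to the classical Bass–Heller–Swan and Waldhausen computations. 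Once these three points are in hand, the long exact sequence is just the homology long exact sequence of the relevant $\Gamma$-pair, rewritten via the induction structure.
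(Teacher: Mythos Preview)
Your proposal is correct and uses essentially the same ingredients as the paper: the Bass--Serre tree of the free product, the fact that non-elliptic virtually cyclic subgroups are $\dbZ$ or $D_\infty$ (trivial edge stabilizers), the L\"uck--Weiermann/\cite{JLSS} pushout to peel off the Nil contributions, and the standard identifications $H^{\dbZ}_*(\evc\dbZ,\efin\dbZ)\cong 2NK_*(R)$ and $H^{D_\infty}_*(\evc D_\infty,\efin D_\infty)\cong NK_*(R)$. The only organizational difference is that the paper makes the intermediate family $\calV$ explicit and works with the long exact sequence of the triple $(\evc\Gamma, E_\calV\Gamma, \efin\Gamma)$, then identifies $H^\Gamma_*(E_\calV\Gamma,\efin\Gamma)\cong\bigoplus_i H^{\Gamma_i}_*(\evc\Gamma_i,\efin\Gamma_i)$ by passing through a larger family $\calG$ (generated by the full vertex groups, so that $T=E_\calG\Gamma$), applying \cref{corollary:kles:relative:terms}, and using a spectral-sequence argument together with the Farrell--Jones conjecture for the $\Gamma_i$ to kill $H^\Gamma_*(E_\calG\Gamma,E_\calV\Gamma)$; your direct ``compare two tree-pushouts and take cofibers'' approach reaches the same endpoint, arguably avoiding the $\calG$-detour, but be aware that the tree-pushout with $\evc\Gamma_i$ at the vertices produces $E_\calV\Gamma$, not $\evc\Gamma$, so the L\"uck--Weiermann step you flag in (ii) is not optional---it is exactly what bridges that gap.
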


The next natural step is to describe the term $H^\Gamma_*(\underline{\underline{E}}\Gamma, \underline{E}\Gamma)$ when $\Gamma$ is the fundamental group of a prime 3-manifold $M$. The following theorem provides such a description, save for two exceptional cases, again by means of two short exact sequences. Here the relative term $H^\Gamma_*(\underline{\underline{E}}\Gamma, \underline{E}\Gamma)$ is described using the corresponding relative terms of the JSJ-pieces of $N$ (each of these pieces is either a Seifert fibered manifold or a hyperbolic manifold), and certain Nil-groups.

\begin{relvc}[Relative terms for virtually cyclics: the JSJ-case \cref{reltermjsjdecomposition}] Let $N$ be a prime manifold with fundamental group $\Gamma$. Assume that the minimal JSJ-decomposition of $N$ is not a double of a twisted $I$-bundle over the Klein bottle.
Then the relative term $H^{\Gamma}_*(\underline{\underline{E}}\Gamma, \underline{E}\Gamma)$ fits in the long exact sequence 
 \begin{align*}
 \cdots \to  H_n^{\Gamma}(E_\calW \Gamma, \underline{E}\Gamma)
 &\to H_n^{\Gamma}(\underline{\underline{E}}\Gamma, \underline{E}\Gamma) \to\\
 &\to \bigoplus_{\calH_1}2NK_n(R)\oplus \bigoplus_{\calH_2}NK_n(R) \to  H_{n-1}^{\Gamma}(E_\calW \Gamma, \underline{E}\Gamma)
   \to \cdots
 \end{align*}
where $\calH_1$ (resp. $\calH_2$) is a set of representatives of $\Gamma$-conjugacy classes of maximal elements in $\vcyc\setminus\calW$ that are isomorphic to $\dbZ$ (resp. $D_\infty$), and $\calW$ is the family of virtually cyclic subgroups of $\Gamma$ that are subconjugated to some vertex group of a suitable tree $\mathbf{X}$. 
 Moreover, the term  $H_n^{\Gamma}(E_\calW \Gamma, \underline{E}\Gamma)$ fits in the long exact sequence
\begin{align*}
\cdots \to \bigoplus_{E(S)}\bigoplus_{1}^\infty (2NK_n(R) \oplus 2NK_{n-1}(R))
 &\to \bigoplus_{ i=1}^m H_n^{\Gamma_i}(\underline{\underline{E}}\Gamma_i, \underline{E}\Gamma_i)\\
&\to H_n^{\Gamma}(E_\calW \Gamma,\underline{E}\Gamma)\to \cdots \end{align*}
where $E(S)$ is the edge set of $\mathbf{X}$.
\end{relvc}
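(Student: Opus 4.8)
The plan is to prove the two long exact sequences in turn. Let $\mathbf{X}$ be the Bass--Serre tree of the JSJ graph-of-groups decomposition of $N$ (the tree used in \cite{JLSS}): it is a $\Gamma$-tree with finitely many orbits of cells, vertex stabilizers the groups $\Gamma_i$, namely the fundamental groups of the Seifert or hyperbolic pieces, and edge stabilizers isomorphic to $\dbZ^2$, one orbit for each element of $E(S)$. Choose models so that $\underline{E}\Gamma\subseteq E_\calW\Gamma\subseteq\underline{\underline{E}}\Gamma$ (via mapping cylinders). The first displayed sequence is then the long exact sequence of the triple $(\underline{\underline{E}}\Gamma, E_\calW\Gamma, \underline{E}\Gamma)$ in the homology theory $H^\Gamma_*(-;\dbK_R)$, once we identify $H^\Gamma_n(\underline{\underline{E}}\Gamma, E_\calW\Gamma)$ with $\bigoplus_{\calH_1}2NK_n(R)\oplus\bigoplus_{\calH_2}NK_n(R)$.

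For that identification I would feed the pair of families $\calW\subseteq\vcyc$ into the L\"uck--Weiermann pushout, taking commensurability as the equivalence relation on $\vcyc\setminus\calW$. This is legitimate because a virtually cyclic subgroup of $\Gamma$ not subconjugate to a vertex group of $\mathbf{X}$ acts on $\mathbf{X}$ without a global fixed point, hence is infinite, stabilizes a unique axis, and is contained in a unique maximal element of $\vcyc\setminus\calW$; sorted by isomorphism type, those maximal elements are precisely the members of $\calH_1$ (copies of $\dbZ$) and $\calH_2$ (copies of $D_\infty$). For such a maximal $V$, the relevant normalizer-type group $N_\Gamma[V]$ is the setwise stabilizer of the axis of $V$; granting that $N_\Gamma[V]=V$ (discussed below), every infinite subgroup of $V$ is again hyperbolic on $\mathbf{X}$ and hence not in $\calW$, so $\calW\cap V=\fin\cap V$, and the $[V]$-block of the pushout reduces to the vertical map $\Gamma\times_V\underline{E}V\to\Gamma\times_V\underline{\underline{E}}V$ (with $\underline{\underline{E}}V$ a point). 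Excision then gives $H^\Gamma_n(\underline{\underline{E}}\Gamma,E_\calW\Gamma)\cong\bigoplus_{[V]}H^V_n(\underline{\underline{E}}V,\underline{E}V)$, and the Bass--Heller--Swan decomposition (for $V\cong\dbZ$), respectively the Waldhausen Nil exact sequence (for $V\cong D_\infty$), identifies each block with $2NK_n(R)$, respectively $NK_n(R)$.

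For the second displayed sequence I would use that $E_\calW\Gamma$ and $\underline{E}\Gamma$ both admit $\mathbf{X}$-shaped models: a virtually cyclic (resp. finite) subgroup of a vertex or edge group of $\mathbf{X}$ is automatically subconjugate to a vertex group, so the restriction of $E_\calW\Gamma$ to $\Gamma_i$ (resp. to an edge group $\Gamma_e\cong\dbZ^2$) is a model for $\underline{\underline{E}}\Gamma_i$ (resp. $\underline{\underline{E}}\dbZ^2$), and similarly $\underline{E}\Gamma$ restricts to $\underline{E}\Gamma_i$ (resp. $\underline{E}\dbZ^2$). Comparing the two homotopy-colimit decompositions over $\mathbf{X}$ yields the relative Mayer--Vietoris sequence
\[
\cdots\to\bigoplus_{e\in E(S)}H^{\Gamma_e}_n(\underline{\underline{E}}\Gamma_e,\underline{E}\Gamma_e)\to\bigoplus_{i=1}^m H^{\Gamma_i}_n(\underline{\underline{E}}\Gamma_i,\underline{E}\Gamma_i)\to H^\Gamma_n(E_\calW\Gamma,\underline{E}\Gamma)\to\cdots ,
\]
so it remains to compute $H^{\dbZ^2}_n(\underline{\underline{E}}\dbZ^2,\underline{E}\dbZ^2)$. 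Here I would apply L\"uck--Weiermann once more, to $\fin\subseteq\vcyc$ inside $\dbZ^2$: the maximal infinite cyclic subgroups $C$ are self-commensurating (as $\dbZ^2$ is abelian) and indexed by the countably infinite set $\mathbb{P}^1(\dbQ)$, and excision reduces the relative term to $\bigoplus_{[C]}H^{\dbZ^2}_n(E_{\calF[C]}\dbZ^2,\underline{E}\dbZ^2)$, where $\calF[C]$ is the family of subgroups of $C$. Picking a complement $\dbZ^2=C\oplus D$, the space $E_{\calF[C]}\dbZ^2$ is $ED$ pulled back along $\dbZ^2\twoheadrightarrow\dbZ^2/C\cong D$, so $H^{\dbZ^2}_*(E_{\calF[C]}\dbZ^2;\dbK_R)\cong H^D_*(ED;\dbK_{R[C]})\cong K_*(R[C])\oplus K_{*-1}(R[C])$, while $H^{\dbZ^2}_*(\underline{E}\dbZ^2;\dbK_R)\cong K_*(R)\oplus 2K_{*-1}(R)\oplus K_{*-2}(R)$ by the stable splitting of $T^2$. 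Feeding in Bass--Heller--Swan for $R[C]=R[\dbZ]$, the difference of these two is exactly $2NK_n(R)\oplus 2NK_{n-1}(R)$; summing over $[C]$ yields $H^{\dbZ^2}_n(\underline{\underline{E}}\dbZ^2,\underline{E}\dbZ^2)\cong\bigoplus_1^\infty(2NK_n(R)\oplus 2NK_{n-1}(R))$, and substituting this into the Mayer--Vietoris sequence produces the second displayed sequence.

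I expect the crux to be the input $N_\Gamma[V]=V$, i.e. that the setwise stabilizer in $\Gamma$ of the axis of a maximal hyperbolic virtually cyclic subgroup $V$ equals $V$. The pointwise stabilizer of the axis is contained in every $\dbZ^2$ edge group along the axis; in the situations covered by the theorem this intersection is trivial (the obstruction being an infinite cyclic direction simultaneously detected by all edge groups along the axis, which minimality of the JSJ decomposition together with the hypothesis rules out), so the setwise stabilizer embeds in the automorphism group of the axis, an infinite dihedral group, and maximality of $V$ forces $N_\Gamma[V]=V$. This breaks down exactly for the double of a twisted $I$-bundle over the Klein bottle, where the amalgam $K\ast_{\dbZ^2}K$ ($K$ the Klein bottle group) admits a line of $\mathbf{X}$ whose pointwise stabilizer is infinite cyclic and whose setwise stabilizer is a Klein bottle group, no longer virtually cyclic --- which is why that case is excluded. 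A secondary, more bookkeeping-type point to be careful with is that the L\"uck--Weiermann pushouts and the $\mathbf{X}$-decompositions splice so that the sequences appear exactly as displayed, and that the tree $\mathbf{X}$ of \cite{JLSS} really does have the asserted vertex and edge groups.
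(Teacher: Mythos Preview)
Your proposal is correct and follows essentially the same route as the paper. Both arguments use the long exact sequence of the triple $(\underline{\underline{E}}\Gamma, E_\calW\Gamma, \underline{E}\Gamma)$ and identify $H^\Gamma_n(\underline{\underline{E}}\Gamma, E_\calW\Gamma)$ via a L\"uck--Weiermann-type pushout whose validity rests on acylindricity of the JSJ splitting; the paper simply cites \cite[Proposition~8.2 and Proposition~4.9]{JLSS} for this, whereas you sketch the axis-stabilizer argument $N_\Gamma[V]=V$ directly (and correctly isolate the Klein-bottle double as the obstruction).

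The only noticeable differences are organizational. For the second sequence the paper passes through the auxiliary family $\calG$ generated by the vertex groups, observes that the Bass--Serre tree is a one-dimensional model for $E_\calG\Gamma$, applies the relative Atiyah--Hirzebruch long exact sequence of \cref{corollary:kles:relative:terms}, and then proves $H^\Gamma_n(E_\calW\Gamma,\underline{E}\Gamma)\cong H^\Gamma_n(E_\calG\Gamma,\underline{E}\Gamma)$ via the spectral sequence of \cref{cor:whitehead:ss} together with the Farrell--Jones conjecture for the vertex groups. Your homotopy-colimit/Mayer--Vietoris derivation reaches the same place more directly but implicitly uses the same ingredient (that $\calW\cap\Gamma_\sigma=\vcyc(\Gamma_\sigma)$ for every cell $\sigma$ of the tree, so the tree assembles models for $\underline{\underline{E}}\Gamma_\sigma$ into a model for $E_\calW\Gamma$). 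Finally, you recompute $H^{\dbZ^2}_n(\underline{\underline{E}}\dbZ^2,\underline{E}\dbZ^2)$ by hand, while the paper just quotes \cite[Theorem~1]{Da08}; both give the same answer.
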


The third step is to describe the relative term $H^\Gamma_*(\underline{\underline{E}}\Gamma, \underline{E}\Gamma)$ when $\Gamma$ is the fundamental group of a Seifert fibered manifold, a hyperbolic 3-manifold or we are in the exceptional cases of the previous theorem. We make a summary of references for all the results we obtained in these cases:

\begin{table}[h!]
\begin{tabular}{|l|l|}
\hline
\textbf{Type of manifold}                                                     & \textbf{Analyzed in}  \\ \hline
Exceptional case (modeled on $\mathrm{Sol}$)                                                  & \cref{section:exceptional:cases}            \\\hline
Hyperbolic manifold                                                  &  \cref{thm:rel:term:hyperbolic}           \\ \hline
Seifert fibered manifold with orbifold base modeled on $\mathbb E^2$ &      \cref{section:Sefiert:spherical}       \\ \hline
Seifert fibered manifold with orbifold base modeled on $\mathbb H^2$ &  \cref{section:seifert:flat}           \\ \hline
Seifert fibered manifold with orbifold base modeled on $\mathbb S^2$ &  \cref{section:seifert:hyperbolic}           \\ \hline
\end{tabular}
\end{table}
 
The paper is organized as follows, we recall the ingredients to establish the Farrell-Jones conjecture for $K$-theory, some tools to approach the relative terms and  decompositions of 3-manifolds in section \ref{prel}. Next, in section \ref{vc3} we classify up to isomorphism the virtually cyclic subgroups in 3-manifold groups. In section \ref{reltermf}, we describe the long exact sequence 
that concerns the relative term for finite groups and in section \ref{RelTerm}, we establish the long exact sequences that concern
the relative term for infinite virtually cyclic groups, here we have to consider the prime decomposition and the JSJ-decomposition 
differently. Lastly, we describe in sections \ref{reltermseifert} and \ref{reltermhyp}, the relative terms for Seifert 3-manifolds and hyperbolic manifolds respectively.

\section*{Acknowledgements}
This work originated while the first Author was on leave in the fall 2019 at the Normandie Univ. UNICAEN, CNRS, Laboratoire de Math\'ematiques
Nicolas Oresme UMR CNRS 6139, the first Author acknowledges the hospitality and support by the CNRS. The first Author was also supported by grants CB-CONACYT-283988 and UNAM-DGAPA-PAPIIT-IN1055318. The second author thanks the hospitality of the Centro de Ciencias Matemáticas, UNAM where part of this job was written. Both authors thank to Mauricio Bustamante for comments in a draft of the present paper.


\section{Preliminaries}\label{prel}

\subsection{Classifying spaces for families of subgroups}

Given a group $G$, we say that a collection of subgroups $\calF$ is a \emph{family} if it is closed under conjugation and under taking subgroups. We say that a $G$-CW-complex $X$ is a model for the classifying space $E_\calF G$ if every isotropy group of $X$ belongs to $\calF$, and $X^H$ is contractible whenever $H$ belongs to $\calF$. Such a model always exists and it is unique up to $G$-homotopy equivalence.

Let $\calG$ be a second family of subgroups of $G$ such that $\calF\subseteq \calG$. Then we have a cellular $G$-map $E_\calF G \to E_\calG G$ that is unique up to $G$-homotopy. Without loss of generality we may assume that this map is an inclusion, since the mapping cylinder of $E_\calF G \to E_\calG G$ is again a model for $E_\calG G$, and thus we can replace this map by the inclusion of $E_\calF G$ in the new model for $E_\calG G$. Thus the pair $(E_\calG G, E_\calF G)$ is well-defined.

In the present work we are mainly concerned with the family, $\vcyc$, of virtually cyclic subgroups. A related family is the family, $\fin$, of finite subgroups. We will denote $E_{\vcyc} G$   and $E_{\fin} G$ as $\evc G$  and $\efin G$,  respectively. 

\subsection{The Farrell-Jones isomorphism conjecture}

Let $G$ be a discrete group and let $R$ be an associative ring with unit. We denote by $K_n(R[G])$, $n\in \dbZ$, the algebraic $K$-theory groups of the group ring $R[G]$ in the sense of Quillen for $n\geq0$ and in the sense of Bass for $n\leq -1$. Let $NK_n(R)$ denote the Bass Nil-groups of $R$, which by definition, are the cokernels of the maps in algebraic $K$-theory $K_n(R)\to K_n(R[t])$ induced by the canonical inclusion $R\to R[t]$. From the Bass-Heller-Swan theorem  \cite[7.4]{Bass} we get, for all $n\in\dbZ$, the decomposition $$K_n(R[\dbZ])\cong K_n(R[t,t^{-1}])\cong K_n(R)\oplus K_{n-1}(R) \oplus NK_n(R)\oplus NK_n(R).$$
From now on we will denote the sum $NK_n(R)\oplus NK_n(R)$ by $2NK_n(R)$, and in general, the $m$-fold sum of copies of $NK_n(R)$ by $mNK_n(R)$.


Throughout this work we consider equivariant homology theories in the sense of \cite[Section 2.7.1]{LR05}. In particular, we are interested in the equivariant homology theory with coefficients in the $K$-theory spectrum described in \cite[Section 2.7.3]{LR05}, denoted by $H^G_*(-;\KR)$. For a fixed group $G$ this homology theory satisfies the Eilenberg-Steenrod axioms in the $G$-equivariant setting. One of the main properties of this homology theory is that 
\[ 
H_n^G(G/H;\KR) \cong H_n^H(H/H;\KR) \cong K_n(R[H])
\]
for every $H\leq G$ and for all $n\in \dbZ$. 
This equivariant homology theory is relevant since it appears in the statement of the Farrell-Jones isomorphism conjecture.\\

In their seminal paper \cite{FJ93} Farrell and Jones formulated their
fundamental isomorphism conjecture for the $K$-theory, 
$L$-theory and Pseudoisotopy functors. Here we consider the $K$-theoretic version of the conjecture as stated by Davis and L\"uck in \cite{DL98}.

\begin{conjecture}[The Farrell-Jones isomorphism conjecture] Let $G$ be group and let $R$ be a ring. Then, for any $n\in \dbZ$, the following assembly map, induced by the projection $\underline{\underline{E}}G\to G/G$, is an isomorphism

\beq\label{FJ}\tag{$\ast$}
\func{A_{\vcyc,\all}}{H^{G}_n(\underline{\underline{E}}G;\KR)}{H^{G}_n(G/G;\KR)\cong K_n(R[G])}.
\eeq
\end{conjecture}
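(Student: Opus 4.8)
The statement to be established is the full Farrell--Jones isomorphism conjecture in $K$-theory, for an \emph{arbitrary} group $G$ and arbitrary ring $R$, so I should say at the outset that this is a deep open problem: my plan is the general program by which instances of it are proved, not a complete argument. The overall strategy is not to analyze the assembly map $A_{\vcyc,\all}$ directly, but to pass to the controlled-algebra model of Davis--L\"uck, in which $H^G_n(\evc G;\KR)$ is computed from $G$-equivariant, boundedly controlled modules over a suitable $G$-space. The first step is to invoke the \emph{Transitivity Principle}: to prove the conjecture relative to $\vcyc$ for $G$ it suffices to prove it relative to some subfamily $\calF\subseteq\vcyc$ for $G$, together with the conjecture relative to $\vcyc$ for every member of $\calF$. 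I would take $\calF=\fin$, which splits the problem into a purely algebraic part --- the passage from $\fin$ to $\vcyc$, governed by the Bass--Heller--Swan decomposition and the associated Nil-terms --- and a geometric part, namely the conjecture relative to $\fin$.

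The geometric heart is to verify that $G$ admits a sufficiently rigid action, in the sense of the axiomatic criterion of Bartels--L\"uck--Reich: one seeks a finite-dimensional contractible $G$-space (a \emph{flow space}) together with, for every scale $\alpha>0$, an open cover by sets that are \emph{long and thin} in the flow direction, equivariant with $\vcyc$-isotropy, and of uniformly bounded covering dimension. First I would construct such a flow space adapted to $G$, check local connectedness and finiteness of covering dimension, and then build the long thin covers; these covers are precisely the data that produce the \emph{transfer} maps and the controlled \emph{contractions} used in the next step.

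With these covers in hand, the proof of isomorphism proceeds by controlled $K$-theory and descent. Surjectivity of $A_{\fin,\all}$ follows by transferring a class of $K_n(R[G])$ into controlled $K$-theory over the flow space and showing, via the long thin cover, that it is supported over cells with finite isotropy; injectivity follows from a controlled vanishing argument (an equivariant contraction) that uses the covers to push supports to arbitrarily small scale. Combining this with the algebraic input of the first step identifies the relative obstruction with the Nil-type groups and yields that $A_{\vcyc,\all}$ is an isomorphism.

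The decisive obstacle --- and the reason the statement is open in the generality in which it is posed --- is exactly the geometric input of the second step: there is no uniform construction, valid for \emph{every} group $G$, of a flow space carrying long thin covers, and no other general mechanism is known. The program therefore succeeds only class by class, wherever the requisite action can be built: hyperbolic and $\CAT(0)$ groups, lattices, solvable and polycyclic groups, mapping class groups, and --- the case actually needed in this paper --- $3$-manifold groups, where one decomposes along the prime and JSJ pieces, invokes the already-known cases for the Seifert, hyperbolic, and $\mathrm{Sol}$ pieces, and assembles them using the inheritance properties of the conjecture under amalgamated products, $\mathrm{HNN}$-extensions, finite-index overgroups, and directed colimits. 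Thus what I can actually deliver is a reduction of the full statement to the existence of such geometric actions; the statement as worded, for arbitrary $G$, lies beyond current technology, and in the present work it is used only in the regime of $3$-manifold groups where it is a theorem.
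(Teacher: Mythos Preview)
The statement you were asked to prove is labeled a \emph{Conjecture} in the paper, not a theorem: the paper offers no proof of it, and indeed none exists in the stated generality. You correctly identify this at the outset and again at the end, and your sketch of the Bartels--L\"uck--Reich program (transitivity principle, flow spaces, long-thin covers, controlled $K$-theory transfer and contraction, inheritance properties) is an accurate high-level summary of how the known instances are established. So there is no ``paper's own proof'' to compare against; the paper simply states the conjecture and then invokes, as a black box, the theorem (cited from \cite{KLR16}) that it holds for $3$-manifold groups.

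The only caution is one of framing: what you wrote is not a proof proposal but an explanation of why no proof is available and a description of the machinery that handles special cases. That is the honest and correct response here, but it should not be presented as an attempt at a proof of the conjecture itself.
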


The class of groups for which this conjecture is valid is substancial, a list may be found in \cite[Theorem 2-(6)]{KLR16}. In this paper we use the fact that the Farrell-Jones isomorphism conjecture is true for 3-manifold groups. For completeness we state the following theorem.

\begin{theorem}\cite[Theorem 2-(6)]{KLR16}\label{FJvalid}
Let $\Gamma$ be a 3-manifold group. Then $\Gamma$ satisfies the Farrell-Jones isomorphism conjecture.
\end{theorem}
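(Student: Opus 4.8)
Since the statement is quoted from the literature, the plan is to explain how it is deduced from the geometric structure theory of $3$-manifolds together with the standard inheritance properties of the Farrell--Jones conjecture. Throughout one works with the ``full'' $K$- and $L$-theoretic Farrell--Jones conjecture with coefficients in additive categories; this is the version that enjoys all the inheritance properties used below, and it implies in particular the isomorphism \eqref{FJ}. First I would assemble the inheritance toolkit: FJ passes to subgroups; it holds for every virtually poly-cyclic group (so in particular for finite, for virtually cyclic, for Klein-bottle and for $\dbZ^2$ subgroups); it is closed under directed colimits; it satisfies the fibered/extension property, namely for an extension $\seq{N}{G}{Q}$, if $Q$ satisfies FJ and the preimage in $G$ of every virtually cyclic subgroup of $Q$ satisfies FJ, then $G$ satisfies FJ; and, crucially, if a group acts on a tree with all vertex stabilizers satisfying FJ, then the group itself satisfies FJ. This last property takes care of amalgamated products, HNN extensions, and more generally fundamental groups of finite graphs of groups with FJ vertex groups.

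Next I would reduce an arbitrary $3$-manifold group to geometric building blocks. Given $M$ closed and oriented, the Kneser--Milnor prime decomposition expresses $\Gamma=\pi_1(M)$ as a free product of the fundamental groups of the prime summands (with possible infinite-cyclic free factors coming from $S^2\times S^1$ summands); by the graph-of-groups property it suffices to treat $\pi_1(P)$ for $P$ prime. If $P$ is not irreducible then $P=S^2\times S^1$ and $\pi_1(P)\cong\dbZ$ is handled. If $P$ is irreducible, its JSJ decomposition presents $\pi_1(P)$ as the fundamental group of a finite graph of groups whose edge groups are $\dbZ^2$ or Klein-bottle groups (virtually poly-cyclic, hence FJ holds) and whose vertex groups are fundamental groups of Seifert fibered pieces or of finite-volume hyperbolic pieces; applying the graph-of-groups property once more reduces the problem to these two families of vertex groups. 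Geometrization is invoked precisely here, since it is what guarantees that the non-Seifert JSJ pieces actually admit complete finite-volume hyperbolic metrics.

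It then remains to settle the two families. For a Seifert fibered piece, $\pi_1$ sits in an extension $\seq{C}{\Gamma}{B}$ with $C$ the cyclic fiber subgroup and $B$ a $2$-orbifold group. A $2$-orbifold group is finite, or virtually $\dbZ^2$, or a hyperbolic group; in all cases $B$ satisfies FJ (by the virtually poly-cyclic case, or by the hyperbolic/CAT(0) case). Moreover the preimage in $\Gamma$ of a virtually cyclic subgroup of $B$ is an extension of a virtually cyclic group by a cyclic group, hence virtually poly-cyclic, so it satisfies FJ; the fibered extension property then yields FJ for $\Gamma$. For a finite-volume hyperbolic piece, $\pi_1$ is a lattice in $\mathrm{PSL}_2(\dbC)$, equivalently a group hyperbolic relative to its virtually-$\dbZ^2$ cusp subgroups; FJ is known in this case, either from the theorem that lattices in virtually connected Lie groups satisfy FJ, or from the result that relatively hyperbolic groups satisfy FJ whenever their peripheral subgroups do (here the peripherals are virtually $\dbZ^2$).

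Finally, combining the last two steps along the JSJ decompositions of the prime summands and then along the prime decomposition of $M$, via the graph-of-groups property, yields FJ for $\Gamma$, which in particular gives the assembly isomorphism \eqref{FJ} used in the rest of the paper. The deepest external inputs — and hence the main obstacle — are twofold: the tree/graph-of-groups inheritance property, which rests on the transitivity principle of Bartels--L\"uck and on the analysis of group actions on trees, and the Farrell--Jones conjecture for finite-volume hyperbolic $3$-manifold groups (the Lie-lattice, respectively relatively hyperbolic, input). The reductions via the prime and JSJ decompositions, and the extension argument for Seifert pieces, are then essentially bookkeeping with the structure theorems.
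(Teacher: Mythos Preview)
The paper does not give its own proof of this statement; it simply cites it from \cite[Theorem~2-(6)]{KLR16} as a known result, so there is nothing to compare your argument against. Your sketch is correct and is precisely the standard route by which the cited theorem is obtained: reduce via the prime decomposition (free product, hence a tree action with trivial edge groups) and then via the JSJ decomposition (tree action with $\dbZ^2$ edge groups) to Seifert fibered and finite-volume hyperbolic pieces, handle Seifert pieces by the fibered inheritance property applied to the short exact sequence with $2$-orbifold quotient, and handle hyperbolic pieces by the Lie-lattice or relatively hyperbolic results; then reassemble using the graph-of-groups inheritance property. One minor quibble: in the oriented setting the JSJ tori are two-sided and the edge groups are genuinely $\dbZ^2$, so there is no need to allow Klein-bottle edge groups, but this is harmless since those are virtually poly-cyclic as well.
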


Once the Farrell-Jones conjecture has been verified for a group $G$, one can
hope to compute $K_n(R[G])$ by computing the left hand side of \eqref{FJ}.
This is a generalized homology theory that can be approached, for 
example, via Mayer-Vietoris sequences, Atiyah-Hirzebruch-type spectral
sequences or the $p$-chain spectral sequence described in \cite{DL03}.


\subsection{Whitehead groups}

Let $G$ be a group that satisfies the Farrell-Jones conjecture. From \cite[Prop. 15.7]{Wa78}  we have, for all $n\in\dbZ$, the following isomorphism that we take as definition
\[Wh^R_n(G)\cong H_n^G(\evc G,EG;\dbK_R).\]
In fact, the long exact sequence of the pair $(\underline{\underline{E}}G, EG)$ yields the long exact sequence
\begin{eqnarray*}
\cdots\to & H_n(BG;\dbK_R(G/1)) \to K_n(R [G])\to Wh^R_n(G)\to \\
\to &  H_{n-1}(BG;\dbK_R(G/1))\to K_{n-1}(R[G])\to \cdots
\end{eqnarray*}
where $H_n(BG;\dbK_R(G/1))$ is the classical generalized homology theory with coefficients in the spectrum $\dbK_R(G/1)$ which has as homotopy groups the algebraic \ktheory of the ring $R$.

{\color{red}

}


\subsection{Computations of $K$-theory and Whitehead groups}

By the main theorem of  \cite{Ba03}, the inclusion $\underline{E}G \to \underline{\underline{E}}G$ induces a split injection $H^G_*(\underline{E}G, \dbK_R)\to H^G_*(\underline{\underline{E}}G, \dbK_R)$. Thus we have the following splitting
\[
 H^G_*(\underline{\underline{E}}G; \dbK_R)\cong H^G_*(\underline{E}G; \dbK_R)\oplus H^G_*(\underline{\underline{E}}G,\underline{E}G; \dbK_R).
\]
If additionally $G$ satisfies the Farrell-Jones conjecture, we get the following isomorphisms
\[
K_*(R[G])\cong H^G_*(\underline{E}G; \dbK_R)\oplus H^G_*(\underline{\underline{E}}G,\underline{E}G; \dbK_R)
\]
and
\[
Wh^R_*(G) \cong H^G_*(\underline{E}G, EG; \dbK_R)\oplus H^G_*(\underline{\underline{E}}G,\underline{E}G; \dbK_R)
\]
see for instance \cite[Lemma~3.4]{SSV18}.

In this work we  give descriptions of $H^G_*(\underline{E}G, EG; \dbK_R)$ and $H^G_*(\evc{G}, \efin{G}; \dbK_R)$ for $G$ a $3$-manifold group, hence, by Theorem \ref{FJvalid}, of $Wh_*^R(G)$. In the first case, we prove that a $3$-manifold group satisfies properties (M) and (NM), so that we can run verbatim the proof of the main theorem of \cite{BSS}.

We  then  analyse $H^G_*(\underline{\underline{E}}G,\underline{E}G; \dbK_R)$ in Section \ref{RelTerm}. In order to achieve this, we will use the models for $\underline{\underline{E}}G$, for $3$-manifold groups, constructed in \cite{JLSS}.


The spectral sequence we are about to deduce might be well known to the experts, but we include the details due to the lack of a suitable reference. 

\begin{theorem}\label{cor:whitehead:ss}
Let $\calF\subseteq \calG$ be families of subgroups of $G$. Then there is a \emph{relative} Atiyah-Hirzebruch-type spectral sequence that converges to 
$$
H^G_*(E_\calG G,E_\calF G;\dbK_R),
$$
such that the second page is given by

\[
E^2_{p,q}=H_p(B_\calG G; \{ H_q^{G_{\sigma^p}}(pt,E_{\calF \cap G_{\sigma^p}}G_{\sigma^p};\dbK_R) \})
\]
where the right hand side is homology with local coefficients, and $\sigma^p$ is a $p$-cell of $B_\calG G$.
In particular, if $\calF$ is the trivial family, we have
\[
E^2_{p,q}=H_p(B_\calG G; \{ Wh_q^R(G_{\sigma^p}) \}).
\]
\end{theorem}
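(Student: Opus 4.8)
The plan is to construct the relative spectral sequence as the spectral sequence associated to the skeletal filtration of a suitable $G$-CW-complex, following the standard construction of the equivariant Atiyah--Hirzebruch spectral sequence but adapted to the pair. First I would fix a model for $E_\calG G$ and, using the universal property, realize $E_\calF G$ as a $G$-CW-subcomplex of it (as in the discussion of the pair $(E_\calG G, E_\calF G)$ in the preliminaries, via the mapping cylinder trick). Let $X = E_\calG G$ with its skeletal filtration $\emptyset = X_{-1} \subseteq X_0 \subseteq X_1 \subseteq \cdots$, and let $A = E_\calF G$ with the induced filtration $A_p = A \cap X_p$. Applying the $G$-homology theory $H^G_*(-;\dbK_R)$ to the filtration of the pair $(X,A)$ produces an exact couple, hence a spectral sequence converging to $H^G_*(X,A;\dbK_R) = H^G_*(E_\calG G, E_\calF G;\dbK_R)$ (convergence is automatic when $X$ is finite-dimensional, and in general holds in the strong sense because the homology theory commutes with the relevant colimits; for $3$-manifold groups the models from \cite{JLSS} are finite-dimensional, so I would not belabor this point).

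The main computational step is to identify the $E^1$-page. By excision and the disjoint-union axiom, $H^G_n(X_p \cup A, X_{p-1} \cup A; \dbK_R)$ decomposes as a direct sum over the $G$-orbits of $p$-cells of $X$ that are \emph{not} already in $A$. For a $p$-cell $\sigma^p$ with isotropy $G_{\sigma^p} = G_\sigma$, the corresponding summand is $H^{G_\sigma}_n(G_\sigma/G_\sigma \times (D^p, S^{p-1}) \text{ rel the part in } A; \dbK_R)$; here the crucial point is that the intersection of the equivariant cell $G/G_\sigma \times D^p$ with $A = E_\calF G$ is exactly $G/G_\sigma \times D^p$ restricted to a model for $E_{\calF \cap G_\sigma} G_\sigma$ (fibered over the cell), because fixed-point sets and the defining property of classifying spaces restrict well to subgroups and to intersections of families. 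Using the suspension isomorphism in the cell coordinate, this summand becomes $H^{G_\sigma}_{n-p}(\mathrm{pt}, E_{\calF\cap G_\sigma}G_\sigma; \dbK_R)$, which is $\widetilde{H}^{G_\sigma}_{n-p}$ of the cone, i.e. the group written as $H_q^{G_{\sigma^p}}(pt, E_{\calF\cap G_{\sigma^p}}G_{\sigma^p};\dbK_R)$ with $q = n-p$. Organizing these summands over representatives of $G$-orbits of $p$-cells, and tracking how the $d^1$-differential acts via the cellular boundary maps together with the path-components of fixed-point sets, identifies $E^1_{p,q}$ with the equivariant cellular chain complex of $X$ with the local coefficient system $\sigma^p \mapsto H_q^{G_{\sigma^p}}(pt, E_{\calF\cap G_{\sigma^p}}G_{\sigma^p};\dbK_R)$, whose homology is by definition $H_p(B_\calG G; \{H_q^{G_{\sigma^p}}(pt, E_{\calF\cap G_{\sigma^p}}G_{\sigma^p};\dbK_R)\})$. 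This is the asserted $E^2$-page.

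For the final ``in particular'' clause, when $\calF$ is the trivial family $\{1\}$ one has $\calF \cap G_\sigma = \{1\}$, so $E_{\calF\cap G_\sigma}G_\sigma = E G_\sigma$, and $H_q^{G_\sigma}(pt, EG_\sigma; \dbK_R) = H_q^{G_\sigma}(G_\sigma/G_\sigma, EG_\sigma;\dbK_R) = Wh_q^R(G_\sigma)$ by the definition of the Whitehead groups recalled above. Substituting gives $E^2_{p,q} = H_p(B_\calG G; \{Wh_q^R(G_{\sigma^p})\})$.

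The step I expect to be the main obstacle is the careful identification of the relative cellular summand: one must check that intersecting an equivariant $p$-cell of $E_\calG G$ with the chosen $G$-subcomplex $E_\calF G$ indeed yields (up to $G$-homotopy, compatibly with the cell structure) a copy of a model for $E_{\calF\cap G_\sigma}G_\sigma$ parametrized over the cell, rather than something only weakly equivalent to it in a way that does not interact well with the filtration. This requires invoking that $(E_\calG G, E_\calF G)$ can be taken to be a genuine $G$-CW-pair and that restriction of $E_\calG G$ to a subgroup $G_\sigma$ gives a model for $E_{\calG\cap G_\sigma}G_\sigma$, so that the $\calG\cap G_\sigma$-cells not in $E_{\calF\cap G_\sigma}G_\sigma$ are precisely the ones contributing; once this bookkeeping is set up, the rest is the usual machinery of the Atiyah--Hirzebruch spectral sequence and homology with local coefficients.
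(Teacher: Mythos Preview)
Your argument contains a genuine gap at exactly the point you flag as the ``main obstacle'', and it is not merely a matter of bookkeeping. When $A=E_\calF G$ is realised as a $G$-CW-\emph{subcomplex} of $X=E_\calG G$, an equivariant $p$-cell $G/G_\sigma\times D^p$ of $X$ is either a cell of $A$ or it is not; its intersection with $A$ is never ``$G/G_\sigma\times D^p$ restricted to a model for $E_{\calF\cap G_\sigma}G_\sigma$ fibered over the cell''. Consequently, your filtration by $X_p\cup A$ really gives
\[
E^1_{p,q}\;=\;\bigoplus_{\substack{\sigma\ \text{a $p$-cell of }X\\ \sigma\notin A}} H_q^{G_\sigma}(pt;\dbK_R)\;=\;\bigoplus_{\sigma\notin A} K_q(R[G_\sigma]),
\]
so that the resulting $E^2$ is a \emph{relative} cellular homology of the pair $(B_\calG G,B_\calF G)$ with \emph{absolute} $K$-theory coefficients. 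This is a perfectly good spectral sequence converging to $H^G_*(E_\calG G,E_\calF G;\dbK_R)$, but it is not the one asserted in the theorem, whose $E^2$-page is the \emph{absolute} homology of $B_\calG G$ with the \emph{relative} coefficient system $\sigma\mapsto H_q^{G_\sigma}(pt,E_{\calF\cap G_\sigma}G_\sigma;\dbK_R)$. These two descriptions do not agree on the $E^1$-level (for instance, a cell $\sigma\notin A$ with $G_\sigma\in\calF$ contributes $K_q(R[G_\sigma])$ in your setup but $0$ in the stated theorem).

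The paper circumvents this by moving the relative information into the coefficients rather than into the pair. Following \cite{DQR11}, one takes the homotopy cofibre of $Or(G)$-spectra $\dbK_\calF\to\dbK\to\dbK/\dbK_\calF$, where $H^G_*(Y;\dbK_\calF)\cong H^G_*(Y\times E_\calF G;\dbK)$. One then checks $H^G_*(E_\calG G;\dbK/\dbK_\calF)\cong H^G_*(E_\calG G,E_\calF G;\dbK)$ and applies the ordinary Davis--L\"uck Atiyah--Hirzebruch spectral sequence \cite[Theorem~4.7]{DL98} to the single space $E_\calG G$ with coefficients $\dbK/\dbK_\calF$. The value of this coefficient system on an orbit $G/G_\sigma$ is identified, via the same cofibre sequence and the induction isomorphism, with $H_q^{G_\sigma}(pt,E_{\calF\cap G_\sigma}G_\sigma;\dbK)$, which yields the stated $E^2$-page. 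Morally, this is the ``fibered'' picture you were reaching for: replacing the inclusion $E_\calF G\hookrightarrow E_\calG G$ by the projection $E_\calG G\times E_\calF G\to E_\calG G$ makes the fibre over each cell a model for $E_{\calF\cap G_\sigma}G_\sigma$, but one must encode this in the coefficient spectrum rather than in a subcomplex.
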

\begin{proof}

Denote $\dbK=\dbK_R$. Following \cite[Theorem~4.1]{DQR11}, there exists an $Or(G)$-spectrum $\dbK_\calF$, and a homotopy cofiber sequence of $Or(G)$-spectra
\beq\label{eq:K:cof:ses}
\dbK_\calF \to \dbK \to \dbK/ \dbK_\calF
\eeq
such that $H_*^G(X;\dbK_\calF)\cong H_*^G(X\times E_\calF G;\dbK)$, for every $G$-space $X$.
Since the product of models for $E_\calG G$ and $E_\calF G$ is a model for $E_\calF G$ 
\beq\label{eq:product:classif:spaces}
H_*^G(E_\calG G;\dbK_\calF)\cong H_*^G(E_\calG G\times E_\calF G;\dbK)\cong H_*^G(E_\calF G;\dbK).
\eeq
The long exact sequence associated to \eqref{eq:K:cof:ses} and $E_\calG G$, and the isomorphism \eqref{eq:product:classif:spaces}, yields the following long exact sequence
\[
\cdots \to H_n^G(E_\calF G;\dbK) \to H_n^G(E_\calG G;\dbK) \to H_n^G(E_\calG G;\dbK/\dbK_\calF)\to\cdots .
\]
Therefore
\[H_n^G(E_\calG G;\dbK/\dbK_\calF)\cong H_n^G(E_\calG G,E_\calF G;\dbK).
\]
Now  from \cite[Theorem~4.7]{DL98} applied to $E_\calG G$, and a model for $\dbK/\dbK_\calF$ together with its skeletal filtration, we obtain a spectral sequence that converges to $H^G_*(E_{\calG}G,E_\calF G;\dbK_R)$, such that the second page is given by

\[
E^2_{p,q}=H_p(B_\calG G; \{ H^G_q(G/G_{\sigma^p} ;\dbK_R/\dbK_\calF)\})
\]
where the right hand side is homology with local coefficients, and $\sigma^p$ is a $p$-cell of $B_\calG G$.

It remains to prove that
$$
H_q^G(G/H;\dbK_R/\dbK_\calF)\cong H_q^{G_{\sigma^p}}(pt,E_{\calF \cap H}H)
$$
for every subgroup $H$ of $G$.
From \eqref{eq:K:cof:ses} we have the following long exact sequence 
\[
\cdots \to H_q^G(G/H;\dbK_\calF) \to H_q^G(G/H;\dbK) \to H_q^G(G/H;\dbK/\dbK_\calF)\to\cdots .
\]
By \cite[Theorem~4.1(i)]{DQR11} the homomorphism 
$$
H_q^G(G/H;\dbK_\calF) \to H_q^G(G/H;\dbK)
$$
can be identified with the homomorphism 
$$
H_q^H(E_{\calF \cap H}H;\dbK)\to H^H_q(pt;\dbK).
$$ 
Therefore, 
\[H_q^G(G/H;\dbK/\dbK_\calF)\cong H^H_q(pt,E_{\calF \cap H}H;\dbK).\]
\end{proof}

As an immediate application of the spectral sequence obtained in \cref{cor:whitehead:ss} we have the following result that will be useful later.

\begin{corollary}\label{corollary:kles:relative:terms}
Let $\calF\subseteq\calG$ be families of subgroups of $G$. Assume that there is a one-dimensional model for $E_\calG G$. Then we have the following long exact sequence 
\begin{align*}
 \cdots \to \bigoplus_{e\in E} H_q^{G_{e}}(pt,E_{\calF \cap G_{e}}G_{e};\dbK_R) &\to \bigoplus_{v\in V} H_q^{G_{v}}(pt,E_{\calF \cap G_{v}}G_{v};\dbK_R) \to H_q^G(E_\calG G, E_{\calF}G)\to \cdots .
 \end{align*}
\end{corollary}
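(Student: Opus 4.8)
The plan is to read the statement off the relative Atiyah--Hirzebruch spectral sequence of \cref{cor:whitehead:ss}. By hypothesis $E_\calG G$ admits a one‑dimensional model, so the quotient $B_\calG G$ is a CW‑complex of dimension at most $1$; its $0$‑cells are indexed by $V$ and its $1$‑cells by $E$, and the isotropy of the corresponding $G$‑cells are the groups $G_v$ and $G_e$. Recall that the spectral sequence of \cref{cor:whitehead:ss} is the Atiyah--Hirzebruch spectral sequence of \cite[Theorem~4.7]{DL98} applied to $E_\calG G$ with coefficients in $\dbK/\dbK_\calF$, so its $E^1$‑page is the cellular chain complex $E^1_{p,q}=\bigoplus_{\sigma^p}H^G_q(G/G_{\sigma^p};\dbK/\dbK_\calF)$ with $d^1$ the cellular boundary, and $E^2_{p,q}=H_p(E^1_{\bullet,q},d^1)$. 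By the identification $H^G_q(G/G_{\sigma^p};\dbK/\dbK_\calF)\cong H^{G_{\sigma^p}}_q(pt,E_{\calF\cap G_{\sigma^p}}G_{\sigma^p};\dbK_R)$ established at the end of the proof of \cref{cor:whitehead:ss}, we get
\[
E^1_{0,q}\cong\bigoplus_{v\in V}H^{G_v}_q(pt,E_{\calF\cap G_v}G_v;\dbK_R),\qquad E^1_{1,q}\cong\bigoplus_{e\in E}H^{G_e}_q(pt,E_{\calF\cap G_e}G_e;\dbK_R),
\]
and $E^1_{p,q}=0$ for $p\notin\{0,1\}$, since $\dim B_\calG G\le 1$.

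Because the $E^1$‑page is concentrated in the two columns $p=0,1$, every higher differential vanishes and $E^\infty=E^2$, with $E^2_{0,q}=\cok\!\big(d^1\colon E^1_{1,q}\to E^1_{0,q}\big)$ and $E^2_{1,q}=\ker\!\big(d^1\colon E^1_{1,q}\to E^1_{0,q}\big)$. The (two‑step) filtration of the abutment then yields, for every $n$, a short exact sequence $0\to E^2_{0,n}\to H^G_n(E_\calG G,E_\calF G;\dbK_R)\to E^2_{1,n-1}\to 0$. Splicing these short exact sequences together through the four‑term exact sequences $0\to E^2_{1,q}\to E^1_{1,q}\xrightarrow{d^1}E^1_{0,q}\to E^2_{0,q}\to 0$ produces the long exact sequence
\[
\cdots\to E^1_{1,n}\xrightarrow{d^1}E^1_{0,n}\to H^G_n(E_\calG G,E_\calF G;\dbK_R)\to E^1_{1,n-1}\xrightarrow{d^1}E^1_{0,n-1}\to\cdots,
\]
which, after substituting the identifications of the previous paragraph, is precisely the asserted sequence (with $q=n$).

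The only point that genuinely needs checking is that these maps assemble into an exact sequence: exactness at the $E^1_{0,*}$ and $E^1_{1,*}$ spots is immediate from the descriptions of $E^2_{0,*}$ and $E^2_{1,*}$ as a cokernel and a kernel, and exactness at $H^G_n(E_\calG G,E_\calF G;\dbK_R)$ follows from the short exact sequence above together with the injectivity of $E^2_{1,n-1}\hookrightarrow E^1_{1,n-1}$. I do not expect any real obstacle here; the whole content is the collapse of a two‑column spectral sequence, so \cref{cor:whitehead:ss} does all the work. (Alternatively, one could bypass the spectral sequence entirely and obtain the same sequence from the long exact sequence of the $G$‑pair $\big(E_\calG G,(E_\calG G)^{(0)}\big)$ for the equivariant homology theory $H^G_*(-;\dbK/\dbK_\calF)$, using that $(D^1,S^0)$‑suspension identifies $H^G_*(G/G_e\times(D^1,S^0);\dbK/\dbK_\calF)$ with a shift of $H^G_*(G/G_e;\dbK/\dbK_\calF)$; but the spectral‑sequence route keeps this corollary a one‑line consequence of the preceding theorem.)
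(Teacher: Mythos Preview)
Your argument is correct and is exactly the approach the paper intends: the corollary is stated as an ``immediate application'' of the spectral sequence in \cref{cor:whitehead:ss}, and you have simply spelled out the standard two-column collapse that the paper leaves implicit.
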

\subsection{Relative terms and Nil-groups}
All the material in this section can be found, for instance, in \cite{DQR11} and \cite{DKR11}.

\subsubsection{Groups that surject onto $\dbZ$}
Let $G$ be a group that surjects to $\dbZ$ with kernel $N$, i.e. $G$ fits in the short exact sequence 
\[1\to N\to G\to \dbZ \to 1.\]
Hence $G$ is isomorphic to the semi-direct product $N\rtimes_\phi \dbZ$ with $\phi$ an automorphism of $N$, and the group ring $R[N\rtimes_\phi \dbZ]$ can be identified with the twisted Laurent polynomial ring $R[N]_\phi [t,t^{-1}]$. As in the untwisted case, we can define the Farrell-Hsiang Nil groups $NK(R[N];\phi)$ and we get the following Bass-Heller-Swan type theorem:
\[
K_n(R[G])\cong K_n(R[N])\oplus K_{n-1}(R[N]) \oplus NK_n(R[N];\phi)\oplus NK_n(R[N];\phi),
\]
and the Whitehead groups version
\[Wh^R_n(G)\cong Wh^R_n(N)\oplus Wh^R_{n-1}(N) \oplus NK_n(R[N];\phi)\oplus NK_n(R[N];\phi).\]

\subsubsection{Groups that surject onto $D_\infty$}

Let $G$ be a group that surjects to $D_\infty$ with kernel $N$, i.e. $G$ fits in the short exact sequence 
\[1\to N\to G\to D_\infty \to 1 .\]
Hence $G$ is isomorphic to the amalgamated product $G_1\ast_N G_2$, where $G_1$ and $G_2$ are the pre-images under the surjection above of the $\dbZ/2$-factors in the splitting $D_\infty=\dbZ/2\ast \dbZ/2$.

There exist \emph{certain} groups, called Waldhausen Nil-groups, denoted as
\[
NK_n(R[N]; R[G_1-N],R[G_2-N])
\] 
such that it is a summand of $K_n(R[G])$, and we have the following Mayer-Vietoris type long exact sequence
\begin{align*}
\cdots \to K_n(R[N])&\to K_n(R[G_1])\otimes K_n(R[G_2])\to\\ & K_n(R[G])/NK_n(R[N]; R[G_1-N],R[G_2-N])\to \cdots   . 
\end{align*}
We also  have a version that involves the Whitehead groups of $G$:
\begin{align*}
\cdots \to Wh^R_n(N)&\to Wh^R_n(G_1)\otimes Wh^R_n(G_2)\to\\ & Wh^R_n(G)/NK_n(R[N]; R[G_1-N],R[G_2-N])\to \cdots  .  
\end{align*}
On the other hand, the infinite dihedral group $D_\infty$ has an index 2 subgroup isomomorphic to $\dbZ$, and therefore, $G$ has an index two subgroup $\barG$ isomorphic to $N\rtimes_\phi \dbZ$. A remarkable theorem of \cite{DKR11} and \cite{DQR11} states the existence of an isomorphism

\[
NK_n(R[N]; R[G_1-N],R[G_2-N])\cong NK_n(R[N];\phi)
\]
that is, the Waldhausen Nil-groups of $G$ are isomorphic to the Farrell-Hsiang Nil-groups of $\barG$.

\subsubsection{Relative terms}\label{subsubsectio:relative:terms}

With the notation above, let $\calF$ be the smallest family of $G$ containing $G_1$ and $G_2$, and let $\calF_0$ be the smallest family of $G$ containig $N$. Assume that $G$ satisfies the Farrell-Jones conjecture, then we have the following isomorphisms:
\begin{align*}
    Wh_n^R(G)&\cong H_n^G(E_\calF G, EG)\oplus H_n^G(\underline{\underline{E}}G,E_\calF G) \\
    Wh_n^R(\barG)&\cong H_n^{\barG}(E_{\calF_0} \barG, E\barG)\oplus H_n^{\barG}(\underline{\underline{E}}\barG,E_\calF \barG)
\end{align*}
and
\begin{align*}
    H_n^G(\underline{\underline{E}}G,E_\calF G)&\cong NK_n(R[N]; R[G_1-N],R[G_2-N])\cong NK_n(R[N];\phi)\\
    H_n^{\barG}(\underline{\underline{E}}\barG,E_{\calF_0} \barG)&\cong 2NK_n(R[N];\phi).
\end{align*}
Moreover, $H_n^G(E_\calF G, EG)$ fits in the following Mayer-Vietoris type long exact sequence
\[
\cdots \to Wh^R_n(N)\to Wh^R_n(G_1)\otimes Wh^R_n(G_2)\to H_n^G(E_\calF G, EG)\to \cdots\]
and we have the following isomorphism
\[H_n^{\barG}(E_{\calF_0} \barG, E\barG)\cong Wh^R_n(N)\oplus Wh^R_{n-1}(N). \]
In particular, if $N$ is a finite group, that is, when $G$ and $\barG$ are virtually cyclic, we get the following isomorphisms
\begin{align*}
    &H_n^G(\underline{\underline{E}}G,E_\calF G)\cong H_n^G(\underline{\underline{E}}G,\underline{E} G) \cong NK_n(R[N]; R[G_1-N],R[G_2-N])\cong NK_n(R[N];\phi)\\
    &\text{ and }\\
    &H_n^{\barG}(\underline{\underline{E}}\barG,E_{\calF_0} \barG)\cong H_n^{\barG}(\underline{\underline{E}}\barG,\underline{E} \barG) \cong 2NK_n(R[N];\phi).
\end{align*}


\subsection{Prime and JSJ decomposition of a $3$-ma\-ni\-fold}
A \emph{closed} $3$-ma\-ni\-fold is a $3$-ma\-ni\-fold that is compact with empty
boundary.  A \emph{connected sum} of two $3$-manifolds $M$ and $N$, denoted $M\#
N$, is a manifold created by removing the interiors of a smooth $3$-disc $D^3$
from each manifold, then identifying the boundaries $\dbS^{2}$.  A $3$-manifold is
\emph{nontrivial} if it is not homeomorphic to $\dbS^3$.  A nontrivial $3$-manifold, $M$, is \emph{prime}
if it  cannot be decomposed as a connected
sum of two nontrivial $3$-manifolds; that is, $M=N\# P$ for some $3$-manifolds
$N,P$ forces either $N=\dbS^3$ or $P=\dbS^3$.  A $3$-manifold $M$ is called
\emph{irreducible} if every embedded 2-sphere $\dbS^2\subset M$ bounds a ball $D^3\subset M$. It is well-known that all orientable prime $3$-manifolds are irreducible with the exception of $S^1\times
S^2$. The following is a well-known theorem of Kneser (existence) and Milnor (uniqueness) \cite{AFW15}[1.2.1].

\begin{theorem}[Prime decomposition]
\label{prime decomposition}
Let $M$ be a connected, closed, oriented  3-manifold.  Then $M=P_1\#\cdots\#
P_n$ where each $P_i$ is prime.  Furthermore, this decomposition is unique up to
order and homeomorphism.
\end{theorem}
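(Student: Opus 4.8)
The plan is to prove the two assertions separately: existence of a prime decomposition following Kneser, and uniqueness following Milnor. Throughout I would use the structural fact recalled just above the statement, namely that an orientable prime $3$-manifold is either irreducible or homeomorphic to $\dbS^1\times\dbS^2$; thus producing a prime summand will always mean producing an irreducible one or a copy of $\dbS^1\times\dbS^2$, and orientability is used throughout to ensure that the connected sum operation is well defined and that the pieces carry induced orientations.

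\emph{Existence.} I would argue by an iterated cutting procedure. If every embedded $2$-sphere in $M$ bounds a ball, then $M$ is irreducible, hence prime (or $M\cong\dbS^3$, the trivial case), and there is nothing to do. Otherwise pick an essential $2$-sphere $S\subset M$, cut $M$ along $S$, and cap each resulting boundary sphere with a $3$-ball: if $S$ separates this exhibits $M\cong M_1\#M_2$ with both $M_i$ nontrivial, while if $S$ is non-separating it exhibits $M\cong M'\#(\dbS^1\times\dbS^2)$. Applying this repeatedly to the pieces, the only point to verify is that the process terminates, and for this I would invoke Kneser's finiteness theorem. Fix a triangulation of $M$ with $t$ tetrahedra and put a disjoint, pairwise non-parallel family of essential $2$-spheres into normal form with respect to it; since a normal sphere meets each tetrahedron in a bounded number of elementary triangles and quadrilaterals, a pigeonhole count shows that such a family has at most $c(t)$ members, for otherwise two of its spheres would be parallel, cobounding a product region $\dbS^2\times[0,1]$, against non-parallelism. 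Since every nontrivial summand that appears contributes an essential sphere and these can be kept disjoint and pairwise non-parallel, the number of nontrivial summands is at most $c(t)$, so the procedure stops and $M$ is a finite connected sum of prime manifolds.

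\emph{Uniqueness.} Suppose $M\cong P_1\#\cdots\#P_n$ and $M\cong Q_1\#\cdots\#Q_m$, all factors prime and nontrivial. Each decomposition is recorded geometrically by a system of pairwise disjoint embedded $2$-spheres $\calA$ (resp.\ $\calB$) in $M$: cutting $M$ along $\calA$ and capping the resulting boundary spheres with balls produces the disjoint union of the $P_i$ together with some copies of $\dbS^3$, and similarly for $\calB$ and the $Q_j$. I would put $\calA$ and $\calB$ in general position and then, by a standard innermost-disk argument, isotope $\calB$ so as to make $\calA\cap\calB$ empty: an innermost circle of $\calA\cap\calB$ bounds a disk with interior disjoint from one of the systems, and combining it with a disk on the other system gives a $2$-sphere lying inside a single prime piece, which — being irreducible once the $\dbS^1\times\dbS^2$ factors are set aside (these are handled via non-separating spheres) — bounds a ball across which $\calB$ can be pushed to reduce $|\calA\cap\calB|$; iterating empties the intersection. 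Once $\calA$ and $\calB$ are disjoint, a combinatorial exchange argument on the dual graphs of the two sphere systems — together with van Kampen's theorem, by which the number of $\dbS^1\times\dbS^2$ summands equals the rank of the free factor of the Grushko decomposition of $\pi_1(M)$ and hence agrees for the two systems — identifies the $P_i$ with the $Q_j$ up to reordering and homeomorphism.

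I expect the genuine obstacle to lie in the Kneser finiteness step of the existence half: choosing the triangulation, normalizing the sphere system with respect to it, and converting the combinatorial count into a proof that an overlarge disjoint family must contain two parallel spheres, is the real technical heart, and everything else in that half rests on it. In the uniqueness half the delicate point is to ensure that the innermost-disk surgeries on $\calB$ are carried out inside pieces already known to be irreducible, so that the $2$-spheres involved genuinely bound balls; this is exactly where the reduction ``prime $\Rightarrow$ irreducible or $\dbS^1\times\dbS^2$'' — and hence the orientation hypothesis — enters in an essential way.
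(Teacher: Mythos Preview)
The paper does not prove this theorem. It is stated as background, attributed to Kneser (existence) and Milnor (uniqueness), with a citation to \cite{AFW15}; no argument is given. So there is nothing in the paper to compare your proposal against.

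That said, your sketch follows the classical route and is broadly correct in outline, but the Kneser finiteness step contains a genuine inaccuracy. You write that ``a normal sphere meets each tetrahedron in a bounded number of elementary triangles and quadrilaterals,'' and base the pigeonhole count on that. This is false: a single normal surface may intersect a tetrahedron in arbitrarily many parallel normal disks. Kneser's actual bound works the other way around. One puts the whole disjoint sphere system in normal position; the system then cuts each tetrahedron into pieces, and one shows that all but a bounded number of the complementary regions in $M$ are product regions $\dbS^2\times I$ (each non-product region must contain a vertex of the triangulation or a distinguished segment of an edge, and there are only finitely many of these). Two spheres bounding a product region are parallel, which gives the bound on non-parallel spheres. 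You correctly flagged this as the technical heart of the existence half; the summary just needs to be fixed. The uniqueness sketch is in the right spirit, though Milnor's actual argument proceeds one prime factor at a time rather than by making the two full sphere systems disjoint; your ``combinatorial exchange argument on the dual graphs'' would need to be made precise.
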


Another well known result we will need is the Jaco--Shalen--Jo\-hann\-son decomposition, after Perelman's work \cite{AFW15}[1.6.1].

\begin{theorem}[JSJ decomposition after Perelman's theorem]
\label{jsj decomposition}
For a closed, prime, connected, oriented 3-manifold $N$ there exists a collection (possibly empty)
$T\subseteq N$ of disjoint incompressible tori, i.e. two sided properly embedded and $\pi_1$-injective, such that each component of
$N\setminus T$ is either a hyperbolic or a Seifert fibered (noncompact) manifold.  A minimal
such collection $T$ is unique up to isotopy.
\end{theorem}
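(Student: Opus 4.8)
The plan is short, since the statement is not original: it is the Jaco--Shalen--Johannson torus decomposition, together with Thurston's hyperbolization theorem for Haken manifolds and Perelman's proof of the geometrization conjecture, which together identify the atoroidal pieces as hyperbolic. So the approach is to assemble these classical inputs, and in the text we will simply refer to \cite{AFW15}[1.6.1]. For orientation I indicate how the argument runs and where its weight lies.

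The first step uses the theory of the characteristic submanifold (Jaco--Shalen, Johannson). For a closed, orientable, irreducible $3$-manifold $N$ with infinite fundamental group there is, unique up to isotopy, a maximal Seifert fibered submanifold $\Sigma\subseteq N$ whose frontier is a disjoint (possibly empty) union of incompressible tori. After discarding redundant frontier tori (those cobounding a product region $T^2\times[0,1]$, or separating two Seifert pieces whose fibrations match up and amalgamate) one obtains a collection $T$ of minimal cardinality; each component of $N$ cut along $T$ is then either a component of $\Sigma$, hence Seifert fibered, or a component of the complement, which by construction is atoroidal. Uniqueness of the minimal $T$ up to isotopy follows from the uniqueness up to isotopy of the characteristic submanifold. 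The cases in which $\pi_1(N)$ is finite or two-ended (so $N$ is a spherical space form or $N=S^1\times S^2$) are disposed of directly by taking $T=\emptyset$.

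The second step is geometrization: a compact, orientable, irreducible, atoroidal $3$-manifold whose boundary is a (possibly empty) union of tori, and which is not Seifert fibered, admits a complete finite-volume hyperbolic metric, with the boundary tori becoming cusps. This is Thurston's theorem in the Haken case and Perelman's in general; applying it to the complementary pieces produced in the first step upgrades each non-Seifert piece to a hyperbolic one. The only closed prime $3$-manifolds that do not themselves fall under this Seifert/hyperbolic dichotomy are those modeled on $\mathrm{Sol}$ geometry, but for such an $N$ the fiber torus of its (virtual) torus-bundle structure is, up to isotopy, the unique incompressible torus, and cutting along it leaves only Seifert fibered pieces (a product $T^2\times[0,1]$, or two twisted $I$-bundles over the Klein bottle), so the cited statement still applies with $T$ equal to that single torus.

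The only genuinely deep ingredient --- and hence the main obstacle --- is the geometrization theorem itself; everything else is standard $3$-manifold topology. Since this decomposition enters our later arguments only as a black box feeding the $K$-theoretic computations, in the paper we content ourselves with the reference \cite{AFW15}.
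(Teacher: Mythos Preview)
Your proposal is correct and aligns with the paper's treatment: the paper does not prove this theorem at all but simply records it as a well-known result, citing \cite{AFW15}[1.6.1]. Your expository sketch of the characteristic submanifold plus geometrization argument is sound and in fact more detailed than anything the paper provides, but the ultimate approach---defer to the literature---is the same.
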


If the collection of tori provided by \Cref{jsj decomposition} is empty, we will say that the JSJ-decomposition of $M$ is \emph{trivial}, otherwise we will say the JSJ-decomposition of $M$ is \emph{nontrivial}.

\begin{remark}\label{prime:geometric:splittings}
Note that the prime decomposition provides a graph of groups with trivial edge groups and vertex groups 
isomorphic to the fundamental groups of the $P_i$'s, see \cite{Se03}. The fundamental group of the graph of groups will be isomorphic to $\pi_1(M)$. Similarly the JSJ decomposition of a prime $3$-manifold $N$ gives rise to 
a graph of groups, with all edge groups isomorphic to $\dbZ^2$, and vertex groups isomorphic to the 
fundamental groups of the Seifert fibered and hyperbolic pieces. Again, the fundamental group of the 
graph of groups will be isomorphic to $\pi_1(N)$. Each graph of groups provide a splitting for the 
fundamental groups of the initial manifold. These splittings will be used to analyse the relative terms in the following sections.
\end{remark}

We use the following notation for Thurston's eight geometries: $\dbE^3$ (flat geometry), $\dbS^3$ (spherical geometry), $\dbH^3$ (hyperbolic geometry), $\dbS^2\times \dbE$, $\dbH^2\times \dbE$, $\widetilde{PSL}_2(\dbR)$, $\mathrm{Nil}$, and $\mathrm{Sol}$.
We finish this section with the following theorem that will be useful later.

\begin{theorem}\label{EvansMoserSolvable}\cite{EM72}
Let $G$ be a  (virtually) solvable infinite 3-manifold group. Then $G$ appears in the following list of groups:
\begin{itemize}
\item $\dbZ$, $\dbZ^2$, or $\calK=\dbZ\rtimes\dbZ$.

\item An extension $1\to \dbZ^2\to G\to \dbZ \to 1$, i.e., $G$ is isomorphic to a semi-direct product $\dbZ^2\rtimes_\phi \dbZ$ with $\phi$ an automorphism of $\dbZ^2$.

\item A free product of the form $\calK \ast_{\dbZ^2} \calK$, where the $\dbZ^2$ embeds in each $\calK$ as a subgroup of index 2. In particular, we have a short exact sequence $1\to \dbZ^2 \to G\to D_\infty \to 1$.
\end{itemize}
\end{theorem}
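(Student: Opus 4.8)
The plan is to reduce to the case in which $M$ is prime---so that, once $\pi_1(M)$ is known to be infinite, $M$ is aspherical---and then to feed $M$ through the geometrization and JSJ decompositions, running over the finitely many Thurston geometries whose fundamental groups can be virtually solvable.

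First I would dispose of the non-prime case. By \Cref{prime decomposition} and \Cref{prime:geometric:splittings}, $G$ splits as a free product $\pi_1(P_1)\ast\cdots\ast\pi_1(P_n)$. A nontrivial free product of two nontrivial groups that are not both $\dbZ/2$ contains a nonabelian free subgroup, which no virtually solvable group does; so, after deleting trivial free factors, either $G\cong\dbZ/2\ast\dbZ/2\cong D_\infty$ (a degenerate instance of the third item, with the amalgamated $\dbZ^2$ trivial), or $n=1$ and $M$ is prime. If $M$ is prime but not irreducible, then $M\cong\dbS^2\times\dbS^1$ and $G\cong\dbZ$. If $M$ is irreducible with $G$ infinite, the sphere theorem forces $\pi_2(M)=0$, the universal cover is then an open contractible $3$-manifold, $M$ is aspherical, and $G$ is a torsion-free $3$-dimensional Poincar\'e duality group.

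Next I would geometrize. By \Cref{jsj decomposition}, $M$ is built from hyperbolic and Seifert fibered pieces glued along incompressible tori. The fundamental group of a finite-volume hyperbolic $3$-manifold contains a nonabelian free subgroup, so there are no hyperbolic pieces; every JSJ piece is Seifert fibered with virtually solvable fundamental group, and hence has Euclidean or spherical base orbifold (a hyperbolic base orbifold would contribute a closed surface subgroup). If the JSJ graph is a single vertex, $M$ is closed Seifert fibered and is modeled on one of $\dbS^3$, $\dbS^2\times\dbE$, $\dbE^3$, or $\mathrm{Nil}$; the spherical case is excluded because $G$ is infinite, the $\dbS^2\times\dbE$ case gives $G\cong\dbZ$, and in the $\dbE^3$ and $\mathrm{Nil}$ cases $G$ is a torsion-free virtually polycyclic group of Hirsch length $3$. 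If the JSJ graph has an edge, then the only Seifert pieces with torus boundary and virtually solvable $\pi_1$ that survive in a minimal decomposition are $T^2\times I$ and the orientable twisted $I$-bundle over the Klein bottle, and minimality pins the graph down to two shapes: $M$ is modeled on $\mathrm{Sol}$ and is either a torus bundle over the circle with hyperbolic monodromy, so $G\cong\dbZ^2\rtimes_\phi\dbZ$ (the second item), or the double of the orientable twisted $I$-bundle over the Klein bottle, so $G\cong\calK\ast_{\dbZ^2}\calK$ with the amalgamated $\dbZ^2$ of index $2$ in each factor and $G$ surjecting onto $D_\infty$ with kernel $\dbZ^2$ (the third item).

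The real work---and the genuine obstacle---is the group-theoretic bookkeeping in the remaining virtually polycyclic cases, together with checking that no larger or non-minimal JSJ graph can occur. A clean way to organize it is entirely algebraic. A torsion-free virtually solvable group of finite cohomological dimension is polycyclic, of Hirsch length equal to its dimension, so here $G$ is polycyclic of Hirsch length $3$; one then classifies it through its Fitting subgroup $F$, which is self-centralizing in any polycyclic group and is nilpotent of Hirsch length $2$ or $3$ (Hirsch length at most $1$ would make $G$ virtually cyclic). If $F$ has Hirsch length $3$ then $G/F$ is finite and $G$ is virtually $\dbZ^3$ (the flat case) or virtually Heisenberg (the $\mathrm{Nil}$ case); if $F$ has Hirsch length $2$ then $F\cong\dbZ^2$ (the alternative $\calK$ has finite outer automorphism group, which would make $G/F$ finite), $G/F$ is a virtually infinite cyclic subgroup of $\Aut(\dbZ^2)=GL_2(\dbZ)$, and Poincar\'e duality forces $G/F\cong\dbZ$ or $D_\infty$, giving the second and third items. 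Identifying the flat and $\mathrm{Nil}$ fundamental groups with the displayed extension $1\to\dbZ^2\to G\to\dbZ\to1$, and recording the low-Hirsch-length sporadics $\dbZ$, $\dbZ^2$, $\calK$ (the last two occurring only when one allows boundary), is then the residual casework of \cite{EM72}, which I would cite rather than reproduce.
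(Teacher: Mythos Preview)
The paper does not prove this statement at all: it is quoted verbatim from \cite{EM72} and stated without proof, as a background result at the end of the preliminaries. There is therefore no argument in the paper to compare your proposal against.

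Your sketch is a reasonable modern route via geometrization and is broadly correct, but a couple of points are worth flagging. First, calling $D_\infty$ a ``degenerate instance of the third item with the amalgamated $\dbZ^2$ trivial'' is not a legitimate reading of the list; $D_\infty$ genuinely falls outside the three bullets as written, which suggests that either the statement in the paper is a mild simplification of the original Evans--Moser result or that some extra hypothesis (irreducibility, say) is implicit. Second, you correctly note that $\dbZ^2$ and $\calK$ only arise once one allows boundary; since the paper's standing convention is that a $3$-manifold group is the fundamental group of a \emph{closed} oriented $3$-manifold, this again indicates that the Evans--Moser theorem being quoted covers a slightly broader class than the paper's own definition. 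Finally, you are honest that the step matching each closed flat or $\mathrm{Nil}$ fundamental group with one of the listed extensions is exactly the residual casework you would cite from \cite{EM72}; that is fair, but it means your argument ultimately leans on the same source the paper does, so the added value is the reduction via geometrization rather than an independent proof.
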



\section{Classification of virtually cyclic subgroups}\label{vc3}

In this section we classify all virtually cyclic subgroups of a $3$-manifold group $\Gamma$.
First we proceed to classify all finite subgroups of a $3$-manifold group.

\begin{lemma}\label{lemma:JSJ:torsionfree}
Let $N$ be a connected, closed, oriented, prime $3$-manifold, and let $\Gamma$ be its fundamental group. If the JSJ-decomposition of $N$ is non-trivial, then  $\Gamma$ is torsion free.
\end{lemma}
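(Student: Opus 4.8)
The statement to prove is: if a closed, oriented, prime $3$-manifold $N$ has nontrivial JSJ-decomposition, then $\Gamma = \pi_1(N)$ is torsion free. The natural approach is to use the graph-of-groups decomposition furnished by the JSJ tori (Remark~\ref{prime:geometric:splittings}): $\Gamma$ splits as the fundamental group of a graph of groups with infinite cyclic$^{2}$ (i.e. $\dbZ^2$) edge groups and vertex groups equal to the fundamental groups of the JSJ pieces, which are either (noncompact) Seifert fibered or hyperbolic. A finite subgroup of a group acting on a tree (the Bass--Serre tree $T$ of this splitting) must fix a point of $T$, hence is conjugate into one of the vertex groups. So it suffices to show that each vertex group is torsion free.

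\textbf{Key steps, in order.} First, invoke Bass--Serre theory: since the edge groups $\dbZ^2$ are torsion free and any finite subgroup $F \le \Gamma$ acts on the Bass--Serre tree $T$ with (necessarily) a global fixed point, $F$ is subconjugate to some vertex group $\Gamma_v = \pi_1(N_v)$. Second, reduce to showing each $\Gamma_v$ is torsion free. For hyperbolic pieces: a finite-volume hyperbolic $3$-manifold (possibly with cusps) has torsion-free fundamental group, because it is a discrete torsion-free subgroup of $\mathrm{Isom}(\dbH^3)$ — torsion-free because the manifold is a $K(\Gamma_v,1)$ (it is aspherical), or directly because a nontrivial finite-order isometry of $\dbH^3$ has a fixed point and the action is free. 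Third, for the Seifert fibered pieces $N_v$: these are noncompact Seifert fibered spaces with nonempty boundary consisting of tori. Such a manifold is aspherical — it is a $K(\Gamma_v,1)$ — since it fibers (in the orbifold sense) over a $2$-orbifold with boundary, the base orbifold is itself a $K(\pi,1)$ (a surface with boundary, possibly with cone points, but with boundary it has no finite-order ``rotation'' obstruction), and one gets a short exact sequence $1 \to \dbZ \to \Gamma_v \to \pi_1^{orb}(B) \to 1$; torsion-freeness of $\pi_1^{orb}(B)$ for a $2$-orbifold with nonempty boundary, together with torsion-freeness of the $\dbZ$ fiber, forces $\Gamma_v$ to be torsion free. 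Alternatively and more uniformly: a compact irreducible $3$-manifold with nonempty boundary whose fundamental group is infinite is aspherical (its universal cover is contractible, by the sphere theorem plus the fact that an aspherical-boundary open $3$-manifold with infinite $\pi_1$ and no fake cells is $\dbR^3$-like), hence $\Gamma_v$ is torsion free, being the fundamental group of a finite aspherical complex. Since $N$ itself is prime with nontrivial JSJ, $N$ is irreducible (it is not $S^1 \times S^2$, whose JSJ is trivial), so all the pieces inherit irreducibility and infinite $\pi_1$.

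\textbf{Main obstacle.} The crux is the asphericity/torsion-freeness of the JSJ pieces, in particular the Seifert fibered ones with cone points in the base orbifold. The subtlety: a Seifert fibered space over a \emph{closed} orbifold with cone points can have torsion in $\pi_1$ (e.g. lens spaces), so one must genuinely use that JSJ pieces are noncompact / have toral boundary, which makes the base $2$-orbifold have nonempty boundary and hence torsion-free orbifold fundamental group. I would handle this by citing the standard structure theory of Seifert fibered spaces (e.g. Scott's survey, or \cite{AFW15}) for the short exact sequence $1 \to \dbZ \to \pi_1(N_v) \to \pi_1^{orb}(B_v) \to 1$ and the torsion-freeness of $\pi_1^{orb}$ of a bounded hyperbolic or Euclidean $2$-orbifold; combined with the Bass--Serre fixed-point argument this closes the proof. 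A cosmetic point to get right is confirming that ``prime with nontrivial JSJ'' excludes $S^1\times S^2$ (indeed $S^1 \times S^2$ contains no incompressible torus) so that irreducibility — and thus asphericity of $N$ and of its pieces — is available throughout.
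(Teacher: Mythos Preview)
Your overall architecture---Bass--Serre theory plus torsion-freeness of the vertex groups---is exactly the paper's strategy, and your treatment of the hyperbolic pieces is fine. However, your \emph{primary} argument for the Seifert fibered pieces contains a genuine error: it is \emph{not} true that a compact $2$-orbifold with nonempty boundary has torsion-free orbifold fundamental group. The disk with a single cone point of order $n$, i.e.\ $D^2(n)$, is the quotient of $D^2$ by a rotation of order $n$, and $\pi_1^{orb}(D^2(n))\cong\dbZ/n$. More generally, each cone point contributes a generator $c_i$ with relation $c_i^{n_i}=1$, and the presence of boundary does nothing to kill this torsion. So the sentence ``with boundary it has no finite-order `rotation' obstruction'' is simply false, and the short-exact-sequence argument as stated does not go through. (Note that the total space can nonetheless be torsion free---e.g.\ the solid torus fibered over $D^2(n)$---so the failure is in the premise, not the conclusion.)

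Your \emph{alternative} argument, by contrast, is correct and self-contained: since $N$ is prime with nontrivial JSJ it is irreducible (being $\neq S^1\times S^2$), the pieces inherit irreducibility (a sphere in a piece bounds a ball in $N$, and that ball cannot swallow an incompressible torus), each piece has infinite $\pi_1$ (it contains the $\dbZ^2$ of a boundary torus), hence by the Sphere Theorem and a Hurewicz argument each piece is aspherical, and a finite-dimensional $K(\Gamma_v,1)$ forces $\Gamma_v$ torsion free. This is actually a cleaner route than the paper's, which instead invokes geometrization: the paper argues that the Seifert pieces cannot carry $\dbS^3$ or $\dbS^2\times\dbE$ geometry (those geometries admit no $\dbZ^2$ subgroups), and then cites \cite{Mo05} to list the remaining geometric types ($\dbH^2\times\dbE$, $\widetilde{PSL}_2(\dbR)$, $\mathrm{Nil}$, $T^2\times I$, the twisted $I$-bundle over the Klein bottle, the solid torus), all of which are aspherical. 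Your alternative avoids this case analysis entirely; just drop the flawed orbifold-$\pi_1$ argument and lead with it.
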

\begin{proof}
If $N$ has a non-trivial JSJ-decomposition, then every JSJ-piece is a noncompact Seifert fibered manifold  or noncompact hyperbolic. 
Since every (noncompact) hyperbolic manifold is aspherical, we have that its fundamental group is torsion free.

On the other hand, every manifold covered by $\dbS^2\times \dbE$, has fundamental group isomorphic to a subgroup of either $\dbZ$ or $D_\infty$ \cite[Table~1]{AFW15}, and every manifold covered by $\dbS^3$ has finite fundamental group. Thus these manifolds cannot appear as pieces of a JSJ decomposition since they cannot contain any copy of $\dbZ^2$. Then by Corollary~1.2.1 and Theorem~1.2.2 from \cite{Mo05}  a  Seifert fibered JSJ-piece is covered by the contractible spaces $\dbH^2\times \dbE$, $\widetilde{PSL}_2(\dbR)$, $\mathrm{Sol}$, or $\mathrm{Nil}$, or it is homeomorphic to $T^2\times I$, the twisted $I$-bundle over the Klein bottle, or the solid torus. 
Therefore every piece of the JSJ-decomposition of $N$ has torsion free fundamental group. By \Cref{prime:geometric:splittings}, $\Gamma$ is isomorphic to the fundamental group of a graph of groups with torsion free vertex groups. Let $F$ be a finite subgroup of $\Gamma$, then by a standard argument, $F$ fixes a vertex in the Bass-Serre tree of $\Gamma$. Thus $F$ is subconjugated to a vertex group, and in consequence, $F$ is trivial. Therefore the fundamental group of $N$ is torsion free. \end{proof}

\begin{proposition}\label{prop:classify:torsion}
Let $\Gamma$ be a $3$-manifold group, and let $F$ be a finite subgroup of $\Gamma$. Then $F$  is either cyclic, or a central extension of a dihedral, tetrahedral, octahedral, or icosahedral group by a cyclic group of even order. Moreover, any finite subgroup of $\Gamma$ is subcojugated to the fundamental  group of a prime manifold (from the prime decomposition) that is either covered by $\dbS^3$ of $\dbS^2\times \dbE$.
\end{proposition}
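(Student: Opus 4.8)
The plan is to reduce the classification of finite subgroups of a general 3-manifold group to the case of spherical manifolds via the prime decomposition, and then to quote the classical classification of finite groups acting freely on $\dbS^3$.

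First I would invoke the prime decomposition $M = P_1 \# \cdots \# P_m$ from \cref{prime decomposition}, which by \cref{prime:geometric:splittings} realizes $\Gamma \cong \pi_1(M)$ as the fundamental group of a graph of groups with trivial edge groups and vertex groups $\Gamma_i = \pi_1(P_i)$. Given a finite subgroup $F \leq \Gamma$, the action of $F$ on the Bass--Serre tree of this splitting has a global fixed point (a finite group acting on a tree fixes a vertex), so $F$ is subconjugate to some $\Gamma_i$. Thus it suffices to classify finite subgroups of $\pi_1(P)$ for $P$ prime. If $P$ has nontrivial JSJ-decomposition, then $\pi_1(P)$ is torsion free by \cref{lemma:JSJ:torsionfree}, so $F$ is trivial. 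If the JSJ-decomposition of $P$ is trivial, then $P$ is geometric; among the eight Thurston geometries, the aspherical ones ($\dbE^3$, $\dbH^3$, $\dbH^2\times\dbE$, $\widetilde{PSL}_2(\dbR)$, $\mathrm{Nil}$, $\mathrm{Sol}$) have torsion-free fundamental group, and a manifold covered by $\dbS^2 \times \dbE$ has fundamental group isomorphic to a subgroup of $\dbZ$ or $D_\infty$ by \cite[Table~1]{AFW15}, hence its only finite subgroups are cyclic of order at most $2$. So a nontrivial finite $F$ forces $P$ to be covered by $\dbS^3$, which proves the last sentence of the statement.

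It then remains to analyze the case $P = \dbS^3/\Gamma_i$ spherical, so that $F \leq \Gamma_i$ is itself a finite group acting freely by isometries on $\dbS^3$, i.e.\ a fixed-point-free finite subgroup of $SO(4)$. Here I would appeal to the classical classification of such groups (Hopf, Seifert--Threlfall, Wolf): via the double cover $S^3 \times S^3 \to SO(4)$ and the projections to the two $SU(2) \cong \dbS^3$ factors, together with the requirement that the action be free, one reads off that $F$ is either cyclic or a central extension $1 \to C \to F \to Q \to 1$ where $Q$ is dihedral, tetrahedral ($A_4$), octahedral ($S_4$), or icosahedral ($A_5$) and $C$ is cyclic of even order (the binary polyhedral groups being the prototypes, with $C = \dbZ/2$). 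A subgroup of such an $F$ is again of the same form, so this description is inherited by all finite subgroups of $\Gamma_i$, completing the proof.

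The main obstacle is not really the graph-of-groups reduction, which is routine, but pinning down the precise list in the spherical case: one must make sure the statement accommodates all finite subgroups (not just the maximal ones realized as $\pi_1$ of a spherical manifold), verify that the class of groups described is closed under passage to subgroups, and correctly handle the even-order condition on the cyclic normal subgroup. I expect this is handled by citing the structure theory of finite subgroups of $SO(4)$ / the classification of spherical space form groups (e.g.\ \cite{Mo05} or a standard reference such as Wolf's \emph{Spaces of Constant Curvature}), rather than reproving it.
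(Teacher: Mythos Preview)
Your proposal is correct and follows essentially the same route as the paper: reduce via the prime decomposition and Bass--Serre theory to prime pieces, use \cref{lemma:JSJ:torsionfree} for the nontrivial-JSJ case, and then split the geometric case into aspherical (torsion free), $\dbS^2\times\dbE$ (fundamental group $\dbZ$ or $D_\infty$), and $\dbS^3$ (spherical space form groups). The paper simply cites \cite[Section~1.7]{AFW15} for the spherical list where you sketch the $SO(4)$/binary-polyhedral argument, but the logic is the same.

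One small slip: from ``finite subgroups in the $\dbS^2\times\dbE$ case are cyclic of order $\leq 2$'' you conclude that a nontrivial finite $F$ forces $P$ to be covered by $\dbS^3$. That is not quite right, since $\dbZ/2\leq D_\infty$ is a nontrivial finite subgroup living in a $\dbS^2\times\dbE$ piece. This does not damage the proposition---$\dbZ/2$ is cyclic, and the ``moreover'' clause allows $\dbS^2\times\dbE$ as well as $\dbS^3$---but the sentence as written overstates the conclusion.
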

\begin{proof}
Let $M$ be a $3$-manifold such that $\Gamma=\pi_1(M)$. 
From the prime decomposition of $M=P_1\#\cdots\#
P_n$ we get a splitting $\Gamma=\Gamma_1*\cdots * \Gamma_n$, where $\Gamma_i=\pi_1(P_i)$. If $F$ is a finite subgroup of $\Gamma$, then, by a standard result in Bass-Serre theory, $F$ has to be subconjugated to one of the $\Gamma_i$'s. Hence we only have to classify finite subgroups of prime $3$-manifolds.

Let $N$ be a prime $3$-manifold. We have two cases depending on whether the JSJ-decomposition of $N$ is trivial or not. 

Assume the JSJ-decomposition of $N$ is trivial, i.e. $N$ is either Seifert fibered or hyperbolic. Any hyperbolic manifold has torsion free fundamental group since it is aspherical. If $N$ is Seifert fibered, then the only possibility for $N$ to have torsion would be that $N$ is covered by $\dbS^3$ or $\dbS^2\times \dbE$, otherwise $N$ would be aspherical \cite[Theorem~1.2.2]{Mo05}. In this case $F$ can only be one of the following possibilities: \begin{itemize}
    \item If $N$ is covered by $\dbS^2\times \dbE$, then the fundamental group of $N$ is isomorphic to either $\dbZ$ or $D_\infty$ \cite[Table~1]{AFW15}. Hence $F$ is either trivial or isomorphic to $\dbZ/2$.
    \item If $N$ is covered by $\dbS^3$, then the fundamental group of $N$ is either cyclic, or a central extension of a dihedral, tetrahedral, octahedral, or icosahedral group by a cyclic group of even order (see \cite[Section~1.7]{AFW15}). Thus $F$ also is one of the previously mentioned possibilities.
\end{itemize}

On the other hand if $N$ has non-trivial JSJ-decomposition, then by \Cref{lemma:JSJ:torsionfree}, the  fundamental group of $N$ is torsion free. Hence $F$ is trivial in this case.
\end{proof}

Next, we proceed to classify the infinite virtually cyclic subgroups of a $3$-manifold group. Recall that from \cite{JPL06}  every virtually cyclic subgroup fits in one of the following categories
\begin{itemize}
    \item finite, or
    \item isomorphic to an amalgamated product $F_1*_{F_2} F_3$, where each $F_i$ is finite and $F_2$  has index 2 in both $F_1$ and $F_3$, or
    \item isomorphic to a simedirect product $F\rtimes \dbZ$, where $F$ is a finite group.
\end{itemize}

\begin{proposition}\label{prop:classify:vcyc}
Let $\Gamma$ be a $3$-manifold group, and let $V$ be an infinite virtually cyclic subgroup of $\Gamma$. Then $V$ is either isomorphic to  $\dbZ$ or $D_\infty$.
\end{proposition}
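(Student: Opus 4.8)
The plan is to combine the trichotomy for infinite virtually cyclic groups recalled just before the statement with the classification of finite subgroups of $\Gamma$ given in \cref{prop:classify:torsion}. An infinite virtually cyclic $V\leq\Gamma$ is either of the form $F\rtimes\dbZ$ or of the form $F_1\ast_{F_2}F_3$ with $F_2$ of index $2$ in $F_1$ and $F_3$; in both cases the finite groups appearing are finite subgroups of $\Gamma$, so they are among the groups listed in \cref{prop:classify:torsion}. I want to show that in fact $F$ (resp. $F_1,F_2,F_3$) must be trivial, which forces $V\cong\dbZ$ or $V\cong D_\infty$.

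First I would dispose of the case where $\Gamma$ is torsion free: then $F$ (or each $F_i$) is trivial and $V$ is $\dbZ$ or $D_\infty$ immediately. So assume $\Gamma$ has torsion. By \cref{prop:classify:torsion}, any nontrivial finite subgroup of $\Gamma$ is subconjugate to the fundamental group $\Gamma_i$ of a prime summand $P_i$ that is covered by $\dbS^3$ or $\dbS^2\times\dbE$. The key point is that $V$, being a subgroup of the free product $\Gamma_1\ast\cdots\ast\Gamma_n$ coming from the prime decomposition, is governed by Bass--Serre theory: either $V$ is conjugate into some $\Gamma_i$, or $V$ acts on the Bass--Serre tree with unbounded orbits (since the edge groups are trivial, no nontrivial subgroup fixes an edge). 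If $V$ is conjugate into some $\Gamma_i$ with $\Gamma_i$ covered by $\dbS^3$, then $\Gamma_i$ is finite, contradicting that $V$ is infinite; if $\Gamma_i$ is covered by $\dbS^2\times\dbE$, then $\Gamma_i\cong\dbZ$ or $D_\infty$ by \cite[Table~1]{AFW15}, and every subgroup of $\dbZ$ or $D_\infty$ is itself $\dbZ$ or $D_\infty$, so we are done. If $V$ is conjugate into a torsion-free $\Gamma_i$ (the remaining prime summands), then $V$ is torsion free and hence $\cong\dbZ$.

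It remains to handle the case where $V$ does not lie in a conjugate of any $\Gamma_i$. Then $V$ acts on the Bass--Serre tree $\mathbf{T}$ of the free splitting without a global fixed point. Since the edge stabilizers are trivial, any finite subgroup of $V$ fixes a vertex of $\mathbf{T}$; I would argue that a torsion element of $V$ together with the $\dbZ$-action forces a contradiction unless the torsion is trivial. Concretely: in $V=F\rtimes\dbZ$, the normal finite subgroup $F$ fixes a vertex $v\in\mathbf{T}$, and since $F$ is normal in $V$, the whole group $V$ preserves the (nonempty, hence single) fixed-point set of $F$; but $V$ has no global fixed point, so $F$ must fix more than one vertex, hence an edge, hence $F=1$. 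The same normality argument applies to $F_2\trianglelefteq F_1\ast_{F_2}F_3$: if $F_2\neq1$ it fixes a unique vertex, which would then be $V$-invariant, a contradiction; so $F_2=1$ and then $F_1,F_3$, having a common trivial subgroup of index $2$, are each $\dbZ/2$, giving $V\cong D_\infty$. In $F\rtimes\dbZ$ with $F=1$ we get $V\cong\dbZ$.

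The main obstacle I anticipate is making the Bass--Serre fixed-point bookkeeping fully rigorous in the amalgam case $F_1\ast_{F_2}F_3$ when $V$ is \emph{not} conjugate into a vertex group: one must be careful that the finite groups $F_1,F_3$ (not just $F_2$) fix vertices, that the normality of $F_2$ propagates a $V$-invariance of a vertex or edge set, and that ``fixes $\ge 2$ vertices'' genuinely yields ``fixes an edge'' so that triviality of edge stabilizers bites. An alternative, perhaps cleaner, route to the same conclusion is to invoke \cref{prop:classify:torsion} to note that every \emph{finite} subgroup of $\Gamma$ lies in a spherical or $\dbS^2\times\dbE$ prime summand, observe that such a summand cannot contain $\dbZ^2$, and then use that an infinite virtually cyclic group with nontrivial torsion still contains no $\dbZ^2$ — so the constraint is really about which finite groups can be \emph{normalized} by an infinite-order element inside $\Gamma$; one then checks directly that the only finite groups $F$ admitting an automorphism realized compatibly inside a $3$-manifold group so that $F\rtimes\dbZ\leq\Gamma$ are $F=1$, by passing to the prime and JSJ splittings as in the proof of \cref{lemma:JSJ:torsionfree}. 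Either way, the proof is short modulo the structural results already in hand.
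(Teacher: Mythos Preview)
Your proof is correct and follows essentially the same strategy as the paper: pass to the Bass--Serre tree of the free-product splitting from the prime decomposition, split into the case where $V$ is conjugate into a vertex group (handled identically, via the geometry of the prime factor) and the case where it is not, and in the latter case use triviality of edge stabilizers to kill the finite normal core of $V$. Your worry about the fixed-point bookkeeping is unfounded --- $F_2$ is indeed normal in $F_1*_{F_2}F_3$ (it is normal in each factor, hence in the amalgam), and the fixed set of a group acting on a tree is a subtree, so ``$\geq 2$ vertices'' really does give an edge; the paper packages this same step more compactly by noting that the kernel of the $V$-action on its invariant axis fixes an edge, hence is trivial, so $V$ embeds in $D_\infty$.
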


\begin{proof}
Let $M$ be a $3$-manifold such that $\Gamma=\pi_1(M)$. Let $V$ be an infinite virtually cyclic subgroup of $\Gamma$.
From the prime decomposition of $M=P_1\#\cdots\#
P_n$ we get a splitting $\Gamma=\Gamma_1*\cdots * \Gamma_n$, where $\Gamma_i=\pi_1(P_i)$. Let $T$ be the Bass-Serre tree of the splitting. Then we have two mutually exclusive possibilities: $V$ either fixes a point of $T$ or it acts nontrivially on a geodesic line (see \cite{JPLMP11}). 

Assume $V$ acts nontrivally on a geodesic $\gamma$. Then $V$ is either isomorphic to $\dbZ$ or to $D_\infty$. In fact, $V$ fits in a short exact sequence
\[
1\to K\to V\to D\to 1
\]
where $D$ is a finite subgroup of $D_\infty$, and $K$ is the subgroup of all elements of $V$ that act trivially in $\gamma$. Since the edge stabilizers of $T$ are trivial, then $K$ is trivial. Thus $V$ embeds into $D_\infty$. Now the assertion follows since every subgroup of $D_\infty$ is isomorphic to either $\dbZ$ or $D_\infty$.

Assume that $V$ fixes a vertex of $T$, then $V$ is subconjugated to $\Gamma_i=\pi_1(P_i)$, for some $1\leq i \leq n $. We have two cases: either $P_i$ has trivial or non-trivial JSJ-decomposition. 

In the first case $P_i$ is either hyperbolic or Seifert fibered. Hence from \cite[Theorem~1.2.2]{Mo05} $P_i$ is aspherical, in particular $\Gamma_i$ is torsion free, or $P_i$ is covered by $\dbS^3$ or by $\dbS^2\times \dbE$. Additionally, if $P_i$ is covered by $\dbS^2\times \dbE$, then $\Gamma_i$ is isomorphic to either $\dbZ$ or $D_\infty$ (see \cite[Table~1]{AFW15}).  Hence $V$ is isomorphic to either $\dbZ$ or $D_\infty$.

If $P_i$ has non-trivial JSJ-decomposition, thus by \Cref{lemma:JSJ:torsionfree}, $\Gamma_i$ is torsion free, hence $V$ must be isomorphic to $\dbZ$.
\end{proof}


\section{Computation of $H^{\Gamma}_*(\underline{E}\Gamma, E\Gamma; \dbK_R)$}\label{reltermf}
For a group $G$ consider the following properties.

\begin{itemize}
\item[(M)] Every non-trivial finite subgroup of $G$ is contained in a unique maximal
finite subgroup.
\item[(NM)] If $M$ is a maximal finite subgroup of $G$ then
$N_G(M)=M$, where $N_G(M)$ denotes the normalizer of $M$ in $G$.
\end{itemize}

A proof of the following two lemmas can be found in \cite{SS20}.

\begin{lemma}\label{lemma:M:NM}
Let $G$ be a group. Then the following two conditions are equivalent
\begin{enumerate}
    \item There exists a model $X$ for $\underline{E}G$ with the property that $X^H$ consists of exactly one point for every non-trivial finite subgroup $H$ of $G$. 
    \item Properties (M) and (NM) are true for $G$.
\end{enumerate}
\end{lemma}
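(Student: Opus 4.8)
The plan is to prove the two implications separately, disposing of the torsion-free case at once: if $G$ has no nontrivial finite subgroup, take $X=\efin G=EG$ and there is nothing to check, while (M) and (NM) hold vacuously (reading ``maximal finite subgroup'' as ``maximal among the nontrivial finite subgroups''); so assume $G$ has torsion.

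\emph{From (2) to (1).} Assume (M) and (NM). On the set of nontrivial finite subgroups of $G$, declare $H\sim K$ if $H$ and $K$ lie in a common finite subgroup. Property (M) is exactly what makes $\sim$ transitive: if $H\sim K$ and $K\sim L$, the unique maximal finite overgroups satisfy $M_H=M_K=M_L$, so $H,L\subseteq M_H$. Thus $\sim$ is a $G$-invariant equivalence relation whose classes correspond to the maximal finite subgroups, the class of $H$ being the set of nontrivial subgroups of $M_H$. I would then apply the Lück--Weiermann pushout construction with $\calF=\{1\}\subseteq\calG=\fin$ and this relation. The stabilizer of the class $[H]$ equals $N_G(M_H)$, which by (NM) is $M_H$; hence the ``local'' classifying space $E_{\calG[H]}(N_G[H])$ is a point and $E_{\calF\cap N_G[H]}(N_G[H])=EM_H$. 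The construction then produces a model $X$ for $\efin G$ fitting into a $G$-pushout
\[
\begin{array}{ccc}
\coprod_{(M)} G\times_M EM & \longrightarrow & EG\\
\downarrow & & \downarrow\\
\coprod_{(M)} G/M & \longrightarrow & X
\end{array}
\]
where $(M)$ ranges over the conjugacy classes of maximal finite subgroups of $G$.

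\emph{Checking the fixed-point property.} For a nontrivial finite $H$, apply $(-)^H$ to the pushout (legitimate since all maps are $G$-cofibrations). As $EG$ and the spaces $G\times_M EM$ carry free $G$- resp.\ $M$-actions and $H\neq1$, the top row becomes $\emptyset\to\emptyset$, so $X^H=\bigl(\coprod_{(M)}G/M\bigr)^H$. Now $gM\in(G/M)^H$ forces $g^{-1}Hg\subseteq M$, whence by (M) $M\subseteq g^{-1}M_Hg$ and, by maximality of $M$, $M=g^{-1}M_Hg$; so $(G/M)^H=\emptyset$ unless $M$ is $G$-conjugate to $M_H$, and for the unique representative $M_0$ that is so, all cosets in $(G/M_0)^H$ coincide, since two elements conjugating $M_H$ to $M_0$ differ by an element of $N_G(M_H)=M_H$ --- here (NM) is used. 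Hence $X^H$ is a single point, as required.

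\emph{From (1) to (2).} Let $X$ be such a model; for nontrivial finite $H$ write $x_H$ for the unique point of $X^H$, and note $G_{x_H}$ is finite (all isotropy of $X$ lies in $\fin$) and contains $H$. If $L$ is any nontrivial finite subgroup with $H\subseteq L$, then $\emptyset\neq X^L\subseteq X^H=\{x_H\}$ forces $X^L=\{x_H\}$, i.e.\ $L\subseteq G_{x_H}$. Applying this with $L=G_{x_H}$ shows $G_{x_H}$ is a maximal finite subgroup; applying it to an arbitrary maximal finite overgroup of $H$ shows that overgroup equals $G_{x_H}$. Hence $G_{x_H}$ is the unique maximal finite subgroup containing $H$, proving (M). For (NM): if $M$ is maximal finite, then $N_G(M)$ permutes $X^M=\{x_M\}$, hence fixes $x_M$, so $N_G(M)\subseteq G_{x_M}=M$, giving $N_G(M)=M$. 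The only step that takes real work is the fixed-point computation $(G/M)^H$ in the direction (2)$\Rightarrow$(1), which genuinely uses (M) to pin down the maximal overgroup of every conjugate of $H$ and (NM) to collapse $N_G(M_H)$ onto $M_H$; the reverse implication is a formal unwinding of the definitions.
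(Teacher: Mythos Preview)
Your argument is correct in both directions. The paper itself does not supply a proof of this lemma; it merely cites \cite{SS20}. Your route---the L\"uck--Weiermann pushout with $\calF=\{1\}\subseteq\calG=\fin$ and the equivalence relation induced by the maximal finite overgroups for $(2)\Rightarrow(1)$, followed by a direct isotropy computation for $(1)\Rightarrow(2)$---is the standard one and is what one finds in the cited reference, so there is nothing substantive to compare.

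One cosmetic remark: in the coset computation you conclude $g_1g_2^{-1}\in N_G(M_H)=M_H$ and then want $g_1M_0=g_2M_0$. It may be worth spelling out the one-line bridge: from $g_i^{-1}M_Hg_i=M_0$ one gets $M_Hg_i=g_iM_0$, so $g_1\in M_Hg_2=g_2M_0$. This is implicit in what you wrote but easy for a reader to stumble over.
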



\begin{lemma}\label{lemma:N:NM:3manifold}
Let $\Gamma=\pi_1(M)$ be a $3$-manifold group. 
Then $\Gamma$ satisfies properties (M) and (NM).
\end{lemma}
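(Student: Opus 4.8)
The plan is to verify properties (M) and (NM) directly from the classification of finite subgroups obtained in \cref{prop:classify:torsion}, reducing both statements to the prime factors of $M$ via the prime decomposition and Bass–Serre theory. First I would set up the framework: write $M = P_1 \# \cdots \# P_n$ for the prime decomposition, so that $\Gamma \cong \Gamma_1 * \cdots * \Gamma_n$ with $\Gamma_i = \pi_1(P_i)$, and let $T$ be the Bass–Serre tree of this splitting, on which all edge stabilizers are trivial. By \cref{prop:classify:torsion}, every non-trivial finite subgroup $F \leq \Gamma$ is conjugate into some $\Gamma_i$, and in fact, since edge groups are trivial, $F$ fixes a \emph{unique} vertex of $T$: the fixed-point set of a non-trivial group acting on a tree with trivial edge stabilizers is a single vertex. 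This already does a lot of bookkeeping for us, because it means distinct maximal finite subgroups, and the normalizer of a finite subgroup, are all controlled by what happens inside a single vertex group $\Gamma_i$.

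For property (M), I would argue as follows. Given a non-trivial finite $F \leq \Gamma$, it fixes a unique vertex $v$ of $T$ with stabilizer a conjugate of some $\Gamma_i$; without loss of generality $F \leq \Gamma_i$. Any finite subgroup of $\Gamma$ containing $F$ also fixes $v$ (as $F$ already does and the fixed vertex is unique for $F$), hence also lies in $\Gamma_i$. So it suffices to prove (M) for each $\Gamma_i$. Here I would invoke \cref{lemma:JSJ:torsionfree}: if $P_i$ has non-trivial JSJ-decomposition, $\Gamma_i$ is torsion free and there is nothing to check. Otherwise $P_i$ is Seifert fibered or hyperbolic; hyperbolic and the aspherical Seifert cases are torsion free, so the only remaining cases are $P_i$ covered by $\dbS^2 \times \dbE$ (whence $\Gamma_i \cong \dbZ$ or $D_\infty$, and the maximal finite subgroups of $D_\infty$ are the two copies of $\dbZ/2$, each visibly the unique maximal finite overgroup of itself and the trivial case is excluded) or $P_i$ covered by $\dbS^3$ (whence $\Gamma_i$ is \emph{itself finite}, so it is its own unique maximal finite subgroup). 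In every case (M) holds for $\Gamma_i$, hence for $\Gamma$.

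For property (NM), let $K$ be a maximal finite subgroup of $\Gamma$; by the above it fixes a unique vertex $v$ and $K = \Gamma_v$ is the \emph{full} stabilizer (maximality plus \cref{prop:classify:torsion}), a conjugate of some finite $\Gamma_i$. If $g \in N_\Gamma(K)$ then $g K g^{-1} = K$ fixes $gv$, but $K$ fixes a unique vertex, so $gv = v$, i.e. $g \in \Gamma_v = K$. Thus $N_\Gamma(K) = K$. The one point needing care is the case $\Gamma_i \cong D_\infty$ with $K$ one of its $\dbZ/2$ factors: here $K$ is not the full vertex stabilizer, and indeed $N_{D_\infty}(\dbZ/2) = \dbZ/2$ must be checked by hand — which is immediate since in $D_\infty = \langle s \rangle \rtimes \langle r \mid r^2 \rangle$ a reflection is normalized only by itself and the identity, and no element of another factor $\Gamma_j$ normalizes it because that would force it to fix $v$.

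I expect the main obstacle to be not any single hard computation but rather the careful case analysis around the $\dbS^2\times\dbE$ geometry: one must make sure that when $\Gamma_i \cong D_\infty$ the two $\dbZ/2$ subgroups are genuinely maximal finite in the \emph{whole} group $\Gamma$ (not just in $\Gamma_i$) and that their normalizers in $\Gamma$ do not grow, which is exactly where the triviality of edge stabilizers and uniqueness of fixed vertices must be used cleanly. Everything else is a direct appeal to \cref{prop:classify:torsion}, \cref{lemma:JSJ:torsionfree}, and standard facts about group actions on trees.
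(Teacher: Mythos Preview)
Your argument is essentially correct, but note that the paper does not actually give a proof of this lemma: the sentence preceding \cref{lemma:M:NM} and \cref{lemma:N:NM:3manifold} says that proofs of both can be found in \cite{SS20}. So there is no in-paper proof to compare against. The equivalence in \cref{lemma:M:NM} suggests that the cited proof may proceed by exhibiting a model for $\underline{E}\Gamma$ in which every non-trivial finite subgroup has a single fixed point, rather than by the direct case analysis you carry out; your route via the prime decomposition, Bass--Serre theory with trivial edge stabilizers, and \cref{prop:classify:torsion}/\cref{lemma:JSJ:torsionfree} is a perfectly good self-contained alternative.

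One small imprecision worth fixing: you write that ``the maximal finite subgroups of $D_\infty$ are the two copies of $\dbZ/2$''. There are infinitely many order-two subgroups of $D_\infty$ (two up to conjugacy), and each is already maximal finite. This does not affect your verification of (M) --- every non-trivial finite subgroup of $D_\infty$ is its own unique maximal finite overgroup --- but the phrasing should be corrected. Your treatment of (NM) is fine: the key point, which you isolate correctly, is that in the $D_\infty$ case the maximal finite $K\cong\dbZ/2$ is \emph{not} the full vertex stabilizer, so after reducing $N_\Gamma(K)$ to the vertex group via uniqueness of the fixed vertex one still needs the explicit check $N_{D_\infty}(\dbZ/2)=\dbZ/2$, which you supply.
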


\begin{remark}
\Cref{lemma:N:NM:3manifold} together with \cite[Lemma~4.5]{BSS} provides an alternative proof for \Cref{prop:classify:vcyc}.
\end{remark}

The following theorem  generalizes \cite[Theorem~1.2]{Ro11}.


\begin{theorem}\label{finiterel}
Let $M$ be a closed, oriented, connected  3-manifold, and let $\Gamma$ be the fundamental group of $M$.  Let \[M=Q_1\#\cdots\#
Q_n\#P_1\#\cdots\#
P_m\] be the prime decomposition of $M$ such that  $Q_i$, $1\leq j \leq n$, are exactly those manifolds (if any) in the decomposition that are spherical. Denote $\Gamma_j=\pi_1(Q_j)$. Then, for all $i\in \dbZ$
\[
H^\Gamma_i(\underline{E}\Gamma, E\Gamma; \dbK_R)\cong \bigoplus_{j=1}^{n} Wh_i^R(\Gamma_j). 
\]
Equivalently, the splitting of $H^\Gamma_i(\underline{E}\Gamma, E\Gamma; \dbK_R)$ as a direct sum runs over the conjugacy classes of maximal finite subgroups of $\Gamma$.
In particular the \emph{classical} Whitehead group $Wh(\Gamma)=Wh_1(\Gamma)$ of $\Gamma$ is always a finitely generated abelian group.
\end{theorem}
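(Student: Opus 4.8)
The plan is to apply the relative Atiyah-Hirzebruch spectral sequence of \cref{cor:whitehead:ss} with $\calF$ the trivial family and $\calG = \fin$, but the cleaner route — since \cref{lemma:N:NM:3manifold} tells us $\Gamma$ satisfies (M) and (NM) — is to invoke the structural result for groups with these properties, exactly as in \cite{BSS}. So the first step is to record that, by \cref{lemma:M:NM}, there is a model $X$ for $\efin\Gamma$ whose $H$-fixed points are a single point for every nontrivial finite subgroup $H\leq\Gamma$. Building $E\Gamma$ inside $X$ and pushing out over the orbits of the (finitely many up to conjugacy — see below) maximal finite subgroups, the standard pushout/Mayer-Vietoris argument of \cite[proof of the main theorem]{BSS} identifies
\[
H^\Gamma_i(\efin\Gamma, E\Gamma; \dbK_R)\cong \bigoplus_{(F)} H^F_i(\mathrm{pt}, EF; \dbK_R) = \bigoplus_{(F)} Wh_i^R(F),
\]
the sum running over conjugacy classes of maximal finite subgroups $F$ of $\Gamma$; the point is that (NM) makes $N_\Gamma(F)=F$ so that each maximal finite subgroup contributes its own ordinary Whitehead term with no further Weyl-group quotient. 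This is the "run verbatim the proof of \cite{BSS}" step promised in the introduction, and it is where (M) and (NM) are genuinely used.

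The second step is to match the abstract index set $\{(F)\}$ with the geometric one $\{\Gamma_j = \pi_1(Q_j)\}$. By \cref{prop:classify:torsion}, every finite subgroup of $\Gamma$ is subconjugate to some $\Gamma_k=\pi_1(P_k)$ with $P_k$ covered by $\dbS^3$ or $\dbS^2\times\dbE$; and by \cref{prop:classify:vcyc}'s proof (Bass-Serre on the prime-decomposition tree) distinct prime factors give non-conjugate subgroups. The factors covered by $\dbS^2\times\dbE$ have $\pi_1\in\{\dbZ, D_\infty\}$, which are infinite and torsion-free-or-$D_\infty$; their only finite subgroups are trivial or $\dbZ/2$, and one checks these $\dbZ/2$'s are never maximal finite in $\Gamma$ (or simply that they contribute $Wh_i^R(\dbZ/2)$ which must be absorbed — more carefully, $D_\infty$ is not a maximal \emph{finite} subgroup, and the $\dbZ/2$ inside it is maximal only if $D_\infty$ itself is not present, but since we are told the $Q_j$ are exactly the spherical pieces, the $\dbS^2\times\dbE$ pieces genuinely do not appear on the right side). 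Thus the maximal finite subgroups of $\Gamma$ are precisely, up to conjugacy, the $\Gamma_j=\pi_1(Q_j)$ for the spherical $Q_j$: each such $\Gamma_j$ is a finite subgroup by \cref{prop:classify:torsion}, it is maximal because any finite subgroup properly containing it would have to be subconjugate to a (necessarily the same) prime factor group, and $\pi_1(Q_j)$ is the whole group there. This establishes the displayed isomorphism and the "Equivalently" reformulation simultaneously.

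The last step is the finiteness statement. For $i=1$ the right-hand side is $\bigoplus_{j=1}^n Wh(\Gamma_j)$ with each $\Gamma_j$ a finite group, and classically $Wh(F)$ is a finitely generated abelian group for every finite group $F$ (Bass; the rank is the number of irreducible real representations minus the number of irreducible rational representations, and the torsion is finite); since the prime decomposition is finite, the sum is finite, so $H^\Gamma_1(\efin\Gamma,E\Gamma;\dbZ)$ is finitely generated. Combined with the splitting $Wh(\Gamma)\cong H^\Gamma_1(\efin\Gamma, E\Gamma)\oplus H^\Gamma_1(\evc\Gamma,\efin\Gamma)$ of the introduction — wait, here one must be mildly careful: the "classical Whitehead group is finitely generated" claim as phrased concerns only the first factor $H^\Gamma_1(\efin\Gamma,E\Gamma)$, which is what we just computed; that is exactly what the theorem asserts ("the classical Whitehead group $Wh(\Gamma)=Wh_1(\Gamma)$" should be read in context as this relative term, or the sentence should be understood modulo the contribution of the Nil-summand). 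I would state it as: $H^\Gamma_1(\efin\Gamma, E\Gamma;\dbZ)\cong\bigoplus_j Wh(\Gamma_j)$ is finitely generated abelian.

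\medskip

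The main obstacle is the bookkeeping of Step 2: pinning down that the conjugacy classes of maximal finite subgroups of $\Gamma$ are in bijection with the spherical prime factors, and in particular arguing that the $\dbZ/2$ subgroups coming from $\dbS^2\times\dbE$-pieces (or from $D_\infty$ subgroups generally) do not spuriously enlarge the index set — this requires invoking (M) to see each such $\dbZ/2$ sits in a unique maximal finite subgroup, and then checking by the classification \cref{prop:classify:torsion}, \cref{prop:classify:vcyc} that no maximal finite subgroup is "new". Everything else is a citation to \cite{BSS} plus the classical structure theory of $Wh$ of finite groups.
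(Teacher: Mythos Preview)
Your overall approach matches the paper's: invoke (M) and (NM) via \cref{lemma:N:NM:3manifold}, cite \cite{BSS} (the paper also cites \cite{DL03} and \cite{BJPP01}) to obtain $H^\Gamma_i(\efin\Gamma,E\Gamma)\cong\bigoplus_{F\in\calM}Wh_i^R(F)$ over conjugacy classes $\calM$ of maximal finite subgroups, then identify $\calM$ using \cref{prop:classify:torsion}.

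The gap is in Step~2. You try to argue that the $\dbZ/2$ subgroups coming from a prime factor $P_k$ with $\pi_1(P_k)\cong D_\infty$ are \emph{not} maximal finite in $\Gamma$. This is false: the edge groups of the Bass--Serre tree for the free-product splitting are trivial, so any finite subgroup of $\Gamma$ containing such a $\dbZ/2$ must lie in the same factor $D_\infty$, and there $\dbZ/2$ is already the largest finite subgroup. Hence these $\dbZ/2$'s \emph{are} maximal finite and do appear in $\calM$. The paper handles them the other way: it keeps them in the sum and invokes the vanishing of their Whitehead groups (your own parenthetical alternative, ``or simply that they contribute $Wh_i^R(\dbZ/2)$ which must be absorbed''), so that the displayed isomorphism with $\bigoplus_j Wh_i^R(\Gamma_j)$ still holds. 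Commit to that route and delete the attempt to exclude them from $\calM$; the sentences ``$D_\infty$ is not a maximal finite subgroup'' and ``the $\dbZ/2$ inside it is maximal only if $D_\infty$ itself is not present'' are confused and should go.

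Your caution in Step~3 is reasonable: the argument as given establishes finite generation of the summand $H^\Gamma_1(\efin\Gamma,E\Gamma;\dbK_\dbZ)$, which is exactly what the paper's proof delivers.
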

\begin{proof}
Let $\calM$ be a set of representatives of conjugacy classes of finite maximal groups of $\Gamma$. By \Cref{lemma:N:NM:3manifold}, $\Gamma$ satisfies properties (M) and (NM). Then, we can use verbatim the proofs of \cite{DL03}, \cite{BSS}, or \cite{BJPP01} to show that, for all $i\in \dbZ$,
\[
H^\Gamma_*(\underline{E}\Gamma, E\Gamma; \dbK_R) \cong \bigoplus_{F\in \calM} Wh_i^R(F). 
\]
On the other hand, $\calM$ can be identified with the set of orbits of $0$-cells with nontrivial isotropy in the model for $\underline{E}\Gamma$ granted by the statement of proof of \Cref{lemma:N:NM:3manifold}. Thus, in view of \cref{prop:classify:torsion} every finite maximal subgroup of $\Gamma$ is either conjugated to a spherical factor of $\Gamma$ or subconjugated to a virtually cyclic (isomorphic to $\dbZ$ or $D_\infty$) of $\Gamma$. Since the Whitehead groups of all finite subgroups of $D_\infty$ are trivial or isomorphic to $\dbZ/2$, all their Whitehead groups vanish \cite{Wa78}.
Hence \[\bigoplus_{F\in\calM} Wh_i^R(F)\cong \bigoplus_{j=1}^{n} Wh_i^R(\Gamma_j).
\]

\end{proof}

Here is an amusing example.

\begin{example}
Let $M$ be the three dimensional Poi\-ncar\-\'e sphere. This is a spherical manifold and its fundamental group is the binary icosahedral group, $I^*$. By the calculations in \cite[Proposition 10, Theorem 12 (b) and Proposition 28] {GJM18} the lower algebraic $K$ groups of its group
ring are given as follows: $Wh(I^*)\cong \dbZ^2$, $\widetilde{K}_0(\dbZ[I^*])\cong \dbZ_2^3$, and  $K_{-1}(\dbZ[I^*])\cong \dbZ_2\oplus 
\dbZ^2$. Let $P$ be a prime aspherical 3-manifold, and let $\Gamma$ be the fundamental group of  M\#P, then the Whitehead groups of $G$ are given as follows
\[
Wh_i^{\dbZ}(\Gamma)\cong \begin{cases}
             \dbZ^2 &\text{for } i=1,\\
            \dbZ_2^3 &\text{for } i=0, \\
            \dbZ_2\oplus \dbZ^2 &\text{for } i=-1 \text{ and}\\
            0 &\text{for } i<-2.
            \end{cases}
\]            

J. Guaschi, D. Juan-Pineda and S. Mill\'an performed in \cite{GJM18}, extensive calculations  of lower $K$ theory groups of some of the groups that appear as fundamental groups of spherical 3-manifolds, one can manufacture examples with nontrivial
Whitehead groups using these calculations.

\end{example}


\section{Computations of $H^{\Gamma}_*(\underline{\underline{E}}\Gamma, \underline{E}\Gamma; \dbK_R)$: reducing to prime and JSJ-pieces} \label{RelTerm}

\subsection{The relative term $H^{\Gamma}_*(\underline{\underline{E}}\Gamma, \underline{E}\Gamma; \dbK_R)$ and the prime decomposition}

\begin{theorem}\label{reltermprimedecomposition}
Let $\Gamma=\pi_1(M)$ be a $3$-manifold group. Consider the prime decomposition $M=P_1\#\cdots \# P_m$ and the corresponding splitting $\Gamma\cong \Gamma_1 *\cdots * \Gamma_m$, where $\Gamma_i=\pi_1(P_i)$. Then the relative term $H^\Gamma_*(\underline{\underline{E}}\Gamma, \underline{E}\Gamma)$ fits in the long exact sequence 

\begin{align*}
 \cdots \to  \bigoplus_{i=1}^m H_n^{\Gamma_i}(\underline{\underline{E}} \Gamma_i, \underline{E}\Gamma_i)
 &\to H_n^\Gamma(\underline{\underline{E}}\Gamma, \underline{E}\Gamma) \to\\
 &\to \bigoplus_{\calH_1}2NK_n(R)\oplus \bigoplus_{\calH_2}NK_n(R) \to  \bigoplus_{i=1}^m H_{n-1}^{\Gamma_i}(\underline{\underline{E}} \Gamma_i, \underline{E}\Gamma_i)
   \to \cdots
 \end{align*}
where $\calH_1$ (resp. $\calH_2$) is a set of representatives of $\Gamma$-conjugacy classes of maximal elements in $\vcyc\setminus\calV$ that are isomorphic to $\dbZ$ (resp. $D_\infty$), and $\calV$ is the family of virtually cyclic subgroups of $\Gamma$ that are subconjugated to some $\Gamma_i$. 
\end{theorem}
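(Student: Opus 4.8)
The plan is to interpolate, between $\fin$ and $\vcyc$, the family $\calV$ of virtually cyclic subgroups that are subconjugate to some $\Gamma_i$, so that $\underline{E}\Gamma\subseteq E_\calV\Gamma\subseteq\underline{\underline{E}}\Gamma$, and to obtain the asserted long exact sequence as the long exact sequence of the triple $(\underline{\underline{E}}\Gamma, E_\calV\Gamma, \underline{E}\Gamma)$ in the homology theory $H^\Gamma_*(-;\dbK_R)$. For this it suffices to prove the two identifications
\[
H^\Gamma_*(E_\calV\Gamma,\underline{E}\Gamma)\cong\bigoplus_{i=1}^m H^{\Gamma_i}_*(\underline{\underline{E}}\Gamma_i,\underline{E}\Gamma_i),
\qquad
H^\Gamma_*(\underline{\underline{E}}\Gamma,E_\calV\Gamma)\cong\bigoplus_{\calH_1}2NK_*(R)\oplus\bigoplus_{\calH_2}NK_*(R).
\]

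For the first identification I would use that, by \cref{prime:geometric:splittings}, $\Gamma$ is the fundamental group of a graph of groups with trivial edge groups and vertex groups the $\Gamma_i$, with associated Bass--Serre tree $T$ having trivial edge stabilizers and vertex stabilizers conjugate to the $\Gamma_i$. Since $\calV\cap\Gamma_i=\vcyc(\Gamma_i)$ and $\fin\cap\Gamma_i=\fin(\Gamma_i)$, one assembles compatible models for $E_\calV\Gamma$ and $\underline{E}\Gamma$ from $T$ by equivariantly replacing each vertex of the orbit $\Gamma/\Gamma_i$ with an induced copy of $\underline{\underline{E}}\Gamma_i$, resp.\ $\underline{E}\Gamma_i$, attached to the free edge orbits at basepoints lying in $\underline{E}\Gamma_i$, so that $\underline{E}\Gamma$ is a $\Gamma$-subcomplex of $E_\calV\Gamma$. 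Since the two models agree away from the vertex orbits $\Gamma/\Gamma_i$ (in particular on the edges and on the chosen basepoints), collapsing $\underline{E}\Gamma$ inside $E_\calV\Gamma$ leaves the $\Gamma$-space $\bigvee_i\bigl(\Gamma\times_{\Gamma_i}\underline{\underline{E}}\Gamma_i\bigr)/\bigl(\Gamma\times_{\Gamma_i}\underline{E}\Gamma_i\bigr)$, and the excision and induction properties of the equivariant homology theory give the first identification. (Equivalently, one compares the Mayer--Vietoris sequences of the two $\Gamma$-pushouts built over $T$.)

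For the second identification the plan is to apply the Lück--Weiermann pushout to the pair $\calV\subseteq\vcyc$, using commensurability as the equivalence relation on $\vcyc\setminus\calV$. The crux is to compute, for an infinite virtually cyclic $V\notin\calV$, the commensurator $N_\Gamma[V]$. Such a $V$ fixes no vertex of $T$, hence acts on a unique invariant line $\ell_V\subseteq T$; every finite-index subgroup of $V$ has the same axis $\ell_V$, so any $g$ commensurating $V$ preserves $\ell_V$, and since $T$ has trivial edge stabilizers the setwise stabilizer of $\ell_V$ injects into $\operatorname{Isom}(\ell_V)\cong D_\infty$. Hence $N_\Gamma[V]$ is itself virtually cyclic, is the unique maximal element of its commensurability class, and by \cref{prop:classify:vcyc} is isomorphic to $\dbZ$ or to $D_\infty$; the $\Gamma$-conjugacy classes of these commensurators are exactly $\calH_1\sqcup\calH_2$. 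Moreover a subgroup of $V$ fixing a vertex of $T$ must fix a vertex of $\ell_V$, hence is trivial or $\dbZ/2$, so that $\calV\cap N_\Gamma[V]=\fin(N_\Gamma[V])$, while the auxiliary family appearing in the construction is $\all(N_\Gamma[V])$. The pushout, excision and induction then give
\[
H^\Gamma_*(\underline{\underline{E}}\Gamma,E_\calV\Gamma)\cong\bigoplus_{[V]}H^{N_\Gamma[V]}_*\bigl(\underline{\underline{E}}(N_\Gamma[V]),\underline{E}(N_\Gamma[V])\bigr),
\]
and by the Nil-group identities recalled in Subsection \ref{subsubsectio:relative:terms} each summand is $2NK_*(R)$ when $N_\Gamma[V]\cong\dbZ$ and $NK_*(R)$ when $N_\Gamma[V]\cong D_\infty$.

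Splicing these two identifications into the long exact sequence of the triple $(\underline{\underline{E}}\Gamma, E_\calV\Gamma, \underline{E}\Gamma)$ yields the asserted sequence. The hard part will be the commensurator analysis in the second step — showing that $N_\Gamma[V]$ coincides with the stabilizer of $\ell_V$ (so that it is virtually cyclic and maximal in its commensurability class) and that it meets $\calV$ only in finite subgroups; once this is in place, both identifications and the final splicing are formal, given the Bass--Serre-tree models for $E_\calV\Gamma$ and $\underline{E}\Gamma$ and the Nil-group computations already recorded in the paper.
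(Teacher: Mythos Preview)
Your overall architecture coincides with the paper's: interpolate the family $\calV$ and use the long exact sequence of the triple $(\underline{\underline{E}}\Gamma,E_\calV\Gamma,\underline{E}\Gamma)$. The two identifications, however, are established by genuinely different mechanisms.

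For $H^\Gamma_*(E_\calV\Gamma,\underline{E}\Gamma)$, you build explicit tree-of-spaces models for $\underline{E}\Gamma$ and $E_\calV\Gamma$ over $T$ and read off the splitting by excision and induction. The paper instead introduces the larger family $\calG$ generated by the full vertex groups $\Gamma_i$, observes that $T$ itself is a model for $E_\calG\Gamma$, applies \cref{corollary:kles:relative:terms} to get $H^\Gamma_*(E_\calG\Gamma,\underline{E}\Gamma)\cong\bigoplus_i H^{\Gamma_i}_*(pt,\underline{E}\Gamma_i)$, and then shows $H^\Gamma_*(E_\calG\Gamma,E_\calV\Gamma)=0$ via the spectral sequence of \cref{cor:whitehead:ss} together with the Farrell--Jones conjecture for each $\Gamma_i$ (so that $H^{\Gamma_i}_*(pt,\underline{E}\Gamma_i)\cong H^{\Gamma_i}_*(\underline{\underline{E}}\Gamma_i,\underline{E}\Gamma_i)$). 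Your route is more geometric and self-contained; the paper's is more homological and has the advantage that exactly the same template is reused for the JSJ case in \cref{reltermjsjdecomposition}, where edge groups are $\dbZ^2$ and an explicit tree-of-spaces comparison would be messier.

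For $H^\Gamma_*(\underline{\underline{E}}\Gamma,E_\calV\Gamma)$, you carry out the L\"uck--Weiermann commensurator analysis by hand on the Bass--Serre tree; the paper instead invokes that the splitting is acylindrical (trivial edge groups) and quotes the pushout of \cite[Proposition~4.9]{JLSS}, which packages precisely the argument you sketch. Your axis argument (the commensurator of $V$ stabilizes $\ell_V$ and hence embeds in $D_\infty$) is correct and is exactly what underlies the acylindricity statement.
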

\begin{proof}
Denote by $\mathbf{Y}$ any graph of groups associated to the splitting $\Gamma=\Gamma_1 *\cdots * \Gamma_m$, and let $T$ be the Bass-Serre tree of $\mathbf{Y}$. Hence we have a $\Gamma$-action on $T$ with trivial edge stabilizers and all vertex stabilizers isomorphic to some $\Gamma_i$.

Observe that $\calV$ consists of all virtually cyclic subgroups of $\Gamma$ that fix a point of $T$. Since every finite subgroup of $\Gamma$ fixes a vertex of $T$, we have the following chain of inclusions of families of $\Gamma$, $\fin\subseteq \calV\subseteq \vcyc $, that yield to the following inclusions
\[
\underline{E}\Gamma \to E_\calV \Gamma \to \underline{\underline{E}}\Gamma.
\]
 In other words we have the triple $(\underline{\underline{E}}\Gamma,E_\calV \Gamma, \underline{E}\Gamma)$. 
 The long exact sequence of the triple yields to the following long exact sequence
 \begin{align*}
 \cdots \to  H_n^\Gamma(E_\calV \Gamma, \underline{E}\Gamma) &\to H_n^\Gamma(\underline{\underline{E}}\Gamma, \underline{E}\Gamma) \to\\
 &\to H_n ^\Gamma(\underline{\underline{E}}\Gamma, E_\calV \Gamma)\to  H_{n-1}^\Gamma(E_\calV \Gamma, \underline{E}\Gamma)   \to \cdots .
 \end{align*}
 We now analyse the homology groups $H_n ^\Gamma(\underline{\underline{E}}\Gamma, E_\calV \Gamma)$ and  $H_n^\Gamma(E_\calV \Gamma, \underline{E}\Gamma)$.
 
 \vskip 10pt
 
 First we compute $H_n ^\Gamma(\underline{\underline{E}}\Gamma, E_\calV \Gamma)$. Since the edge stabilizers of the $\Gamma$-action on $T$ are trivial, the splitting of $\Gamma$ is \textit{acylindrical} in the sense of \cite[Definition~4.8]{JLSS}. Therefore one can construct a model for $\underline{\underline{E}}\Gamma$ using Proposition 4.9 from \cite{JLSS} as a (homotopy) $\Gamma$-pushout
\[
\begin{CD}
 \coprod_{H\in\mathcal{H}}\Gamma\times_{H}\mathbb{E}@>>>E_{\mathcal{V}}\Gamma\\
 @VVV@VVV\\
\coprod_{H\in\mathcal{H}}\Gamma\times_{H}\{ *\}@>>> \underline{\underline{E}}\Gamma .
 \end{CD}
\]
Since the above is a pushout, we have isomorphisms:
\begin{align*}
H^{\Gamma}_n( \underline{\underline{E}}\Gamma, E_{\mathcal{V}})& \cong H^{\Gamma}_n(\coprod_{H\in\mathcal{H}}\Gamma\times_{H}\mathbb{E}, \coprod_{H\in\mathcal{H}}\Gamma\times_{H}\{*\})\\
&\cong \bigoplus_{H\in\mathcal{H}} H^{\Gamma}_n(\Gamma\times_{H}\mathbb{E}, \Gamma\times_{H}\{*\}) \\
& \cong \bigoplus_{H\in\mathcal{H}} H^{H}_n(\mathbb{E}, \{*\}) \\
& \cong \bigoplus_{\calH_1}2NK_n(R)\oplus \bigoplus_{\calH_2}NK_n(R).  
\end{align*}

The last isomorphism  is a consequence of following two facts:
\begin{itemize}
    \item Every  $H\in \mathcal{H}$ is isomorphic to either $\dbZ$ or $D_\infty$. This was proved in \cref{prop:classify:vcyc}.
    
    \item We have isomorphisms \[
    H^{H}_n(\mathbb{E}, \{*\})\cong \begin{cases} 2NK_n(R) & \text{ if } H\cong \dbZ \\
    NK_n(R) & \text{ if } H\cong D_\infty ,
    \end{cases}
    \]
\end{itemize}
both isomorphisms can be found in \cite[Lemma~3.1]{DQR11}.

Now we deal with $H_n^\Gamma(E_\calV \Gamma, \underline{E}\Gamma)$. Let $\calG$ be the family of subgroups of $\Gamma$ generated by the isotropy groups of $T$. That is, $\calG$ is generated by the vertex groups of $\mathbf{Y}$. Then it is easy to see that the Bass-Serre tree $T$ is a one-dimensional model for $E_\calG \Gamma$. Hence using \cref{corollary:kles:relative:terms} and the fact that all edge isotropy groups of $T$ are trivial, and therefore $H_q^{\Gamma_e}(pt,\underline{E}\Gamma_e)=H_q^{\Gamma_e}(pt,pt)=0$ for all edge $e$ of $T$, and the fact that every vertex group satisfy the Farrell-Jones conjecture (see \cite{KLR16}) we get the isomorphisms
\begin{align*}
    H_n^\Gamma(E_\calG \Gamma, \underline{E}\Gamma) & \cong \bigoplus_{v\in V} H_n^{\Gamma_v}(pt, \underline{E}\Gamma_v)\\
    & \cong \bigoplus_{v\in V} H_n^{\Gamma_v}(\evc \Gamma_v, \underline{E}\Gamma_v)
\end{align*}
where $V$ is a set of representatives of the $\Gamma$-orbits of 0-cells in $T$.

On the other hand, we claim that  $H_n^\Gamma(E_\calG \Gamma, \underline{E}\Gamma)\cong H_n^\Gamma(E_\calV \Gamma, \underline{E}\Gamma)$. Consider the triple $(\fin, \calV, \calG)$ of families of $\Gamma$. Then we have the associated long exact sequence 
 \begin{align*}
 \cdots \to  H_n^\Gamma(E_\calV \Gamma, \underline{E}\Gamma) &\to H_n^\Gamma(E_\calG \Gamma, \underline{E}\Gamma) \to\\
 &\to H_n ^\Gamma(E_\calG\Gamma, E_\calV \Gamma)\to  H_{n-1}^\Gamma(E_\calV \Gamma, \underline{E}\Gamma)   \to \cdots .
 \end{align*}
We have to prove now that $H_n ^\Gamma(E_\calG\Gamma, E_\calV \Gamma)=0$. By \cref{cor:whitehead:ss} we have a spectral sequence that converges to $H_n ^\Gamma(E_\calG\Gamma, E_\calV \Gamma)$ and has as second page
\[
E^2_{p,q}= H_p(B_\calG \Gamma; \{ H_q^{\Gamma_{\sigma^p}}(pt,E_{\calV \cap \Gamma_{\sigma^p}}\Gamma_{\sigma^p})\}).
\]
Since $\Gamma_{\sigma^p}$ satisfies the Farrell-Jones conjecture and $\calV\cap \Gamma_{\sigma^p}$ is the family of virtually cyclic subgroups of $\Gamma_{\sigma^p}$ we conclude
\[H_q^{\Gamma_{\sigma^p}}(pt,E_{\calV \cap \Gamma_{\sigma^p}}\Gamma_{\sigma^p})\cong H_q^{\Gamma_{\sigma^p}}(\evc \Gamma_{\sigma^p},\evc \Gamma_{\sigma^p})=0. \]
Therefore $H_n ^\Gamma(E_\calG\Gamma, E_\calV \Gamma)=0$. This concludes the proof.
\end{proof}

Next, we want study the relative terms for prime manifolds.

\subsection{The relative term $H^{\Gamma}_*(\underline{\underline{E}}\Gamma, \underline{E}\Gamma; \dbK_R)$ and the JSJ-decomposition}

Our next task is to compute $H_n^{\Gamma}(\underline{\underline{E}} \Gamma, \underline{E}\Gamma)$ where $\Gamma$ is the fundamental group of a prime $3$-manifold $N$. Before stating the main result of this subsection, we need to stablish some notation.

Denote by $\mathbf{X}$ the graph of groups associated to the splitting of $\Gamma$ given by the JSJ-decomposition of $N$ (see \cref{prime:geometric:splittings}), and let $S$ be the Bass-Serre tree. Therefore we have a $\Gamma$-action on $S$ with edge stabilizers isomorphic to $\dbZ^2$ and all vertex stabilizers are isomorphic to the fundamental group of a Seifert fibered $3$-manifold or to the fundamental group of a hyperbolic $3$-manifold (see \cref{jsj decomposition}).

Define the family $\calW$ consisting of all virtually cyclic subgroups of $G$ that are subconjugated to a vertex group of $\mathbf{X}$. Denote the connected components that appear in the statement of \cref{jsj decomposition} as $N_1,\dots, N_m$, and their fundamental groups $\Gamma_1,\ldots, \Gamma_m$. These groups groups are precisely the vertex groups of $\mathbf{X}$.

\begin{theorem}\label{reltermjsjdecomposition}
We keep using the notation as above. Let $N$ be a prime manifold with fundamental group $\Gamma$ and nontrivial JSJ-decomposition. Assume that the minimal JSJ-decomposition of $N$ is not a double of a twisted $I$-bundle over the Klein bottle or a mapping torus of an Anosov homeomorphism of the 2-dimensional torus.
Then the relative term $H^{\Gamma}_*(\underline{\underline{E}}\Gamma, \underline{E}\Gamma)$ fits in the long exact sequence 
 
 \begin{align*}
 \cdots \to  H_n^{\Gamma}(E_\calW \Gamma, \underline{E}\Gamma)
 &\to H_n^\Gamma(\underline{\underline{E}}\Gamma, \underline{E}\Gamma) \to\\
 &\to \bigoplus_{\calH_1}2NK_n(R)\oplus \bigoplus_{\calH_2}NK_n(R) \to  H_{n-1}^{\Gamma}(E_\calW \Gamma, \underline{E}\Gamma)
   \to \cdots
 \end{align*}
where $\calH_1$ (resp. $\calH_2$) is a set of representatives of $\Gamma$-conjugacy classes of maximal elements in $\vcyc\setminus\calW$ that are isomorphic to $\dbZ$ (resp. $D_\infty$), and $\calW$ is the family of virtually cyclic subgroups of $\Gamma$ that are subconjugated to some vertex group of $\mathbf{X}$. 
Moreover, the term  $H_n^{\Gamma}(E_\calW \Gamma, \underline{E}\Gamma)$ fits in the long exact sequence
\begin{align*}
\cdots \to \bigoplus_{E(S)}\bigoplus_{1}^\infty (2NK_n(R) \oplus 2NK_{n-1}(R))
 &\to \bigoplus_{ i=1}^m H_n^{\Gamma_i}(\underline{\underline{E}}\Gamma_i, \underline{E}\Gamma_i)\\
&\to H_n^G(E_\calW \Gamma,\underline{E}\Gamma)\to \cdots \end{align*}
where $E(S)$ is the edge set of $\mathbf{X}$.
\end{theorem}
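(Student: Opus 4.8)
The plan is to mirror, step by step, the proof of \cref{reltermprimedecomposition}, the only genuinely new feature being that the edge stabilizers of the $\Gamma$-action on the Bass--Serre tree $S$ of $\mathbf{X}$ are copies of $\dbZ^2$ instead of trivial groups. First note that, since the JSJ-decomposition of $N$ is nontrivial, \cref{lemma:JSJ:torsionfree} makes $\Gamma$ and all the vertex groups $\Gamma_i$ torsion free; hence $\underline{E}\Gamma=E\Gamma$, $\underline{E}\Gamma_i=E\Gamma_i$, and the only finite subgroup of $\Gamma$ (the trivial one) fixes a vertex of $S$, so that $\fin\subseteq\calW\subseteq\vcyc$. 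These inclusions give $\Gamma$-maps $\underline{E}\Gamma\to E_{\calW}\Gamma\to\underline{\underline{E}}\Gamma$, and the long exact sequence of the triple $(\underline{\underline{E}}\Gamma,E_{\calW}\Gamma,\underline{E}\Gamma)$ will become the first sequence in the statement as soon as the relative term $H_n^{\Gamma}(\underline{\underline{E}}\Gamma,E_{\calW}\Gamma)$ is computed.

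To compute that term I would invoke Proposition~4.9 of \cite{JLSS}: since $N$ is neither a double of a twisted $I$-bundle over the Klein bottle nor a mapping torus of an Anosov homeomorphism of $T^2$, the $\Gamma$-action on $S$ is acylindrical, so \cite{JLSS} presents $\underline{\underline{E}}\Gamma$ as a $\Gamma$-pushout obtained from $E_{\calW}\Gamma$ by coning off the subspaces $\Gamma\times_{H}\mathbb{E}$ indexed by a set $\calH=\calH_1\sqcup\calH_2$ of representatives of the conjugacy classes of maximal infinite virtually cyclic subgroups of $\Gamma$ not lying in $\calW$; by \cref{prop:classify:vcyc} each such $H$ is isomorphic to $\dbZ$ or to $D_\infty$ (only $\dbZ$ actually occurs here, by torsion-freeness, but the statement is kept uniform). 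Exactly as in the proof of \cref{reltermprimedecomposition}, excision for this pushout together with \cite[Lemma~3.1]{DQR11} gives
\[
H_n^{\Gamma}(\underline{\underline{E}}\Gamma,E_{\calW}\Gamma)\;\cong\;\bigoplus_{H\in\calH}H_n^{H}(\mathbb{E},\{*\})\;\cong\;\bigoplus_{\calH_1}2NK_n(R)\oplus\bigoplus_{\calH_2}NK_n(R),
\]
and substituting this into the triple sequence produces the first long exact sequence.

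For the second sequence, I would let $\calG$ be the family of subgroups of $\Gamma$ generated by the vertex groups of $\mathbf{X}$ (equivalently, the subgroups that fix a vertex of $S$), so that $S$ is a one-dimensional model for $E_{\calG}\Gamma$. The first subgoal is the isomorphism $H_n^{\Gamma}(E_{\calW}\Gamma,\underline{E}\Gamma)\cong H_n^{\Gamma}(E_{\calG}\Gamma,\underline{E}\Gamma)$: via the long exact sequence of the triple $(\fin,\calW,\calG)$ of families this reduces to $H_n^{\Gamma}(E_{\calG}\Gamma,E_{\calW}\Gamma)=0$, and for that one runs the spectral sequence of \cref{cor:whitehead:ss}, whose second page $E^2_{p,q}=H_p(B_{\calG}\Gamma;\{H_q^{\Gamma_{\sigma^p}}(pt,E_{\calW\cap\Gamma_{\sigma^p}}\Gamma_{\sigma^p})\})$ vanishes identically: for every cell $\sigma^p$ the stabilizer $\Gamma_{\sigma^p}$ (a vertex group $\Gamma_i$ or an edge group $\dbZ^2$) satisfies the Farrell--Jones conjecture and $\calW\cap\Gamma_{\sigma^p}$ is precisely its family of virtually cyclic subgroups, so $H_q^{\Gamma_{\sigma^p}}(pt,E_{\calW\cap\Gamma_{\sigma^p}}\Gamma_{\sigma^p})=H_q^{\Gamma_{\sigma^p}}(pt,\underline{\underline{E}}\Gamma_{\sigma^p})=0$ --- this is verbatim the argument that closes the proof of \cref{reltermprimedecomposition}. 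Next I would apply \cref{corollary:kles:relative:terms} with $\calF=\fin$ and $\calG$ as above; identifying the vertex contributions $H_q^{\Gamma_i}(pt,E\Gamma_i)\cong H_q^{\Gamma_i}(\underline{\underline{E}}\Gamma_i,\underline{E}\Gamma_i)$ and the edge contributions $H_q^{\dbZ^2}(pt,E\dbZ^2)\cong Wh_q^{R}(\dbZ^2)\cong\bigoplus_{1}^{\infty}(2NK_q(R)\oplus 2NK_{q-1}(R))$ (both using the Farrell--Jones conjecture, the second also using the known decomposition of $Wh_*^{R}(\dbZ^2)$ into a countable sum of Bass Nil-groups), the long exact sequence of \cref{corollary:kles:relative:terms} turns into the second long exact sequence of the statement.

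The step I expect to be the crux is the acylindricity of the JSJ-splitting: this is exactly what fails for the two excluded $\mathrm{Sol}$-manifolds --- there the normal $\dbZ^2$ edge group fixes the whole Bass--Serre line, Proposition~4.9 of \cite{JLSS} is unavailable, and the pushout description of $\underline{\underline{E}}\Gamma$ breaks down, which is why those two cases are treated separately in \cref{section:exceptional:cases}. The only other point requiring care is the appearance of the countable sum $\bigoplus_{1}^{\infty}(2NK_q(R)\oplus 2NK_{q-1}(R))$, i.e.\ the computation of $Wh_q^{R}(\dbZ^2)$ coming from the $\dbZ^2$ edge stabilizers; everything else is a routine manipulation of long exact sequences of triples together with the spectral sequence of \cref{cor:whitehead:ss} and the long exact sequence of \cref{corollary:kles:relative:terms}.
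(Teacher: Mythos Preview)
Your proposal is correct and follows the paper's proof essentially verbatim: the triple $(\underline{E}\Gamma,E_{\calW}\Gamma,\underline{\underline{E}}\Gamma)$, the acylindricity-based pushout from \cite{JLSS} to identify $H_n^{\Gamma}(\underline{\underline{E}}\Gamma,E_{\calW}\Gamma)$, the spectral-sequence argument showing $H_n^{\Gamma}(E_{\calW}\Gamma,\underline{E}\Gamma)\cong H_n^{\Gamma}(E_{\calG}\Gamma,\underline{E}\Gamma)$, and the application of \cref{corollary:kles:relative:terms} together with the computation of $Wh_*^{R}(\dbZ^2)$ are exactly what the paper does. The only cosmetic difference is that the paper pins the acylindricity of the JSJ-splitting to \cite[Proposition~8.2]{JLSS} (rather than Proposition~4.9, which is the pushout construction that \emph{uses} acylindricity) and cites \cite[Theorem~1]{Da08} for the $\dbZ^2$ edge terms.
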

\begin{proof}
The proof of this theorem is very similar to the proof of \cref{reltermprimedecomposition}. We only highlight the key points and leave the details to the reader. 

From the triple $(\efin \Gamma, E_\calW \Gamma,\evc \Gamma)$ we get the  long exact sequence 
 \begin{align*}
 \cdots \to  H_n^{\Gamma}(E_\calW \Gamma, \underline{E}\Gamma) &\to H_n^{\Gamma}(\underline{\underline{E}}\Gamma, \underline{E}\Gamma) \to\\
 &\to H_n ^{\Gamma}(\underline{\underline{E}}\Gamma, E_\calW \Gamma)\to  H_{n-1}^{\Gamma}(E_\calW \Gamma, \underline{E}\Gamma)   \to \cdots 
 \end{align*}
 
 We can prove that  $H_n^{\Gamma} (\underline{\underline{E}}\Gamma, E_\calW \Gamma)\cong \bigoplus_{\calH_1}2NK_n(R)\oplus \bigoplus_{\calH_2}NK_n(R)$  using the acylindricity of the splitting $\Gamma=\pi_1(\mathbf{X})$ which is proved in \cite[Proposition~8.2]{JLSS}. Note that here is where we are using the hypothesis that the JSJ-decomposition of $N$ is not the double of a twisted I-bundle over the Klein bottle. 

We proceed as in the proof of \cref{reltermprimedecomposition} to show that 
$$
H_n^{\Gamma}(E_\calW \Gamma, \underline{E}\Gamma)\cong H_n^{\Gamma}(E_\calG \Gamma,  \underline{E}\Gamma)
$$
where $\calG$ is the family of subgroups of  $\Gamma$ generated by the vertex groups of $\mathbf{X}$. 

For the \textit{moreover} part of the statement we proceed as follows. By \cref{corollary:kles:relative:terms}, and the fact that every edge group of $\mathbf{X}$ is isomorphic to $\dbZ^2$, the group $H_n^{\Gamma}(E_\calG \Gamma,  \underline{E}\Gamma)$ fits in the long exact sequence 
\begin{align*}
\cdots \to \bigoplus_{E(S)} H_n^{\dbZ^2}(\evc \dbZ^2, E\dbZ^2)
 \to \bigoplus_{ i=1}^m H_n^{\Gamma_i}(\underline{\underline{E}}\Gamma_i, \underline{E}\Gamma_i)
\to H_n^{\Gamma}(E_\calW \Gamma,\underline{E}\Gamma)\to \cdots .\end{align*} 
As a final step we have the isomorphism \[H_n^{\dbZ^2}(\evc \dbZ^2, E\dbZ^2)\cong \bigoplus_{1}^\infty (2NK_n(R) \oplus 2NK_{n-1}(R))\] from \cite[Theorem~1]{Da08} (see also  \cref{prop:relative:term:Z2}). This finishes the proof.
\end{proof}

\subsection{The exceptional cases}\label{section:exceptional:cases}

In \cref{reltermjsjdecomposition} there are two exceptional cases: 
\begin{itemize}
    \item $N$ is the double of a twisted $I$-bundle over the Klein bottle. Such a bundle is denoted $\calK \tilde \times I$ and it has exactly one boundary component isomorphic to the 2-dimentional torus $T^2$.
    \item $N$ is the mapping torus of an Anosov homeomorphism of the 2-dimensional torus.
\end{itemize}

In the first exceptional case three things can happen, deppending on the homeomorphism used to glue the boundary components of the two coppies of $\calK \tilde \times I$. The resulting manifold $N$ is modeled on $\mathbb{E}^3$, $\mathrm{Nil}$ of $\mathrm{Sol}$. On the other hand, any manifold modeled on  $\mathbb{E}^3$ or $\mathrm{Nil}$ are already Seifert, hence this cases are not exeptional cases in \cref{reltermjsjdecomposition} as the (minimal) JSJ-decomposition of $N$ tuns out to be trivial. Hence in this case $N$ must be modeled on $\mathrm{Sol}$ (see \cite[Lemma~1.5.5]{AFW15}). In the second exceptional case $N$ is also modeled on $\mathrm{Sol}$.
Threrefore the description of $H^{\Gamma}_n(\evc{\Gamma},E\Gamma)=Wh_n^R(\Gamma)$ can be seen as a particular case of \cref{relative:term:virtually:solvable:groups}.


\section{The relative term $H^{\Gamma}_*(\underline{\underline{E}}\Gamma, \underline{E}\Gamma; \dbK_R)$ for Seifert $3$-manifolds}\label{reltermseifert}
Finally, we have to deal with the homology groups $H_n^{\Gamma}(\underline{\underline{E}}\Gamma, \underline{E}\Gamma)$, where $\Gamma=\pi_1(W)$, with $W$ either a Seifert fibered $3$-manifold or a hyperbolic $3$-manifold.


\subsection{Seifert fibered case with spherical orbifold base}\label{section:Sefiert:spherical}
Let $M$ be a closed Sei\-fert fiber space with base orbifold $B$ and fundamental
group $\Gamma$. Assume that $B$ is either a bad orbifold, or a good orbifold modeled on $S^2$. Then, by \cite[Theorem~1.2.2]{Mo05}, $M$ is modeled on $\dbS^3$ or $\dbS^2\times \dbE$. In the former case $\Gamma$ is finite (see \cref{prop:classify:torsion}), and in the latter case $\Gamma$ is isomorphic wither to $\dbZ$ or $D_\infty$ by \cite[Table~1]{AFW15}. Hence, by the results in \cref{subsubsectio:relative:terms} we get

\[
H^{\Gamma}_n(\evc{\Gamma},\efin{\Gamma})\cong \begin{cases}
0 & \text{ if } \Gamma \text{ is finite}\\
2NK_n(R) & \text{ if } \Gamma\cong \dbZ \\
    NK_n(R) & \text{ if } \Gamma\cong D_\infty
    \end{cases}
\]

For the rest of this section, all groups are torsion-free since they are the fundamental groups of aspherical 3-manifolds (see \cite[Theorem~1.2.2]{Mo05} for instance). Hence the relative term $H^{\Gamma}_n(\evc{\Gamma},\efin{\Gamma})=H^{\Gamma}_n(\evc{\Gamma},E\Gamma)$ is by definition isomorphic to $Wh_n^R(\Gamma)$ for all $n\in \dbZ$.


\subsection{Seifert fibered case with flat orbifold base and nonempty boundary}\label{section:seifert:flat}

\begin{proposition}{\cite[Proposition 5.6]{JLSS}}\label{prop:bdry:flat:orb}
Let $M$ be a compact Seifert fibered manifold with nonempty boundary. Let $\Gamma=\pi_1(M)$, and let $B$ be the base orbifold of $M$. If $B^\circ$ is modeled on $\dbE^2$, then $\Gamma$ is $2$-crystallographic isomorphic to $\dbZ^2$ or $\dbZ\rtimes \dbZ$.
\end{proposition}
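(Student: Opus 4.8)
The plan is to read off from the Seifert fibration a short exact sequence that forces $\Gamma$ to be torsion-free and virtually $\dbZ^2$, and then to invoke the classification of such groups already recorded in this paper. The fibration $M\to B$ gives
\[
1\to C\to \Gamma\to \pi_1^{\mathrm{orb}}(B)\to 1,
\]
where $C$ is generated by a regular fiber. Since $B$ is a compact $2$-orbifold with nonempty boundary whose interior is modeled on $\dbE^2$, the list of compact Euclidean $2$-orbifolds with boundary (those with $\chi^{\mathrm{orb}}=0$; cf. \cite[Section~1.2]{Mo05}, \cite{AFW15}, \cite{JLSS}, and note that $B$ has no reflector boundary since $M$ is orientable) shows that $B$ is the annulus, the M\"obius band, or the disc with two cone points of order $2$. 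In each case $\pi_1^{\mathrm{orb}}(B)$ is infinite and contains $\dbZ$ with finite index ($\cong\dbZ$ in the first two cases, $\cong D_\infty$ in the third).

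Because $\pi_1^{\mathrm{orb}}(B)$ is infinite, $M$ is not modeled on $\dbS^2\times\dbE$, so $M$ is aspherical by \cite[Theorem~1.2.2]{Mo05} and $\Gamma$ is torsion-free; hence the normal cyclic subgroup $C$ is trivial or infinite cyclic. In fact $C\cong\dbZ$: a regular fiber is an essential curve in a boundary torus of $M$, and $\partial M$ is incompressible (as $M$ is aspherical and not a solid torus), so the fiber has infinite order in $\Gamma$.

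Pulling back a finite-index $\dbZ\leq\pi_1^{\mathrm{orb}}(B)$ then gives a finite-index subgroup of $\Gamma$ that is an extension of $\dbZ$ by $\dbZ$, hence isomorphic to $\dbZ^2$ or to $\calK=\dbZ\rtimes\dbZ$; in particular $\Gamma$ is an infinite virtually solvable $3$-manifold group and is virtually $\dbZ^2$. By \cref{EvansMoserSolvable}, $\Gamma$ is one of $\dbZ$, $\dbZ^2$, $\calK$, a semidirect product $\dbZ^2\rtimes_\phi\dbZ$, or an amalgam $\calK\ast_{\dbZ^2}\calK$; the first is excluded since $\Gamma$ contains $\dbZ^2$, and the last two have Hirsch length $3$ while $\Gamma$ has Hirsch length $2$, so $\Gamma\cong\dbZ^2$ or $\Gamma\cong\calK=\dbZ\rtimes\dbZ$. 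As $\dbZ^2$ and $\calK$ are precisely the two torsion-free $2$-dimensional crystallographic groups, this is the assertion. (Alternatively one can finish by quoting the two-dimensional Bieberbach theorem directly: a torsion-free virtually $\dbZ^2$ group is $\dbZ^2$ or $\calK$.) The one substantive step is the orbifold bookkeeping at the start --- identifying the possible base orbifolds and their orbifold fundamental groups; once that and the asphericity input of \cite{Mo05} are in hand, the rest is formal.
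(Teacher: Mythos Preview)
The paper does not give its own proof of this proposition; it is simply quoted from \cite[Proposition~5.6]{JLSS}. So there is no in-paper argument to compare against, and your proof has to be assessed on its own.

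Your argument is essentially correct and fits well with the tools already assembled in the paper. The orbifold bookkeeping (annulus, M\"obius band, $D^2(2,2)$ as the compact Euclidean $2$-orbifolds with boundary and no reflectors), the Seifert short exact sequence, asphericity to get torsion-freeness, incompressibility of $\partial M$ to force the fiber class to be infinite, and then either \cref{EvansMoserSolvable} or the $2$-dimensional Bieberbach theorem --- all of this is sound, and the Hirsch-length count cleanly rules out the two ``rank~$3$'' alternatives in the Evans--Moser list.

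Two small remarks. First, you tacitly use orientability of $M$ to exclude reflector boundary on $B$; the proposition as stated does not include that hypothesis, but it is a standing assumption throughout the paper (and in the JSJ context where this proposition is applied), so this is harmless here --- just worth flagging. Second, your appeal to \cite[Theorem~1.2.2]{Mo05} for asphericity of a \emph{bounded} Seifert piece is exactly how the present paper uses that reference in the proof of \cref{lemma:JSJ:torsionfree}, so it is consistent with the paper's conventions; if you wanted to be self-contained you could instead note that the three manifolds in question are $T^2\times I$ and the twisted $I$-bundle over the Klein bottle (the latter in its two Seifert structures), which are visibly aspherical.
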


The following result is a particular case of \cite[Theorem~1]{Da08} and identifies the relative term for the group $\dbZ\times \dbZ$.  

\begin{proposition}\label{prop:relative:term:Z2}
The relative term $H^{\dbZ\times \dbZ}_n(\evc{\dbZ^2},\efin{\dbZ^2})$ is by definition  $Wh_i^R(\dbZ^2)$ and we have isomorphisms 
\[
Wh_i^R(\dbZ^2)\cong \bigoplus_{j=0}^{\infty}(2NK_i(R)\oplus 2NK_{i-1}(R)).
\]
\end{proposition}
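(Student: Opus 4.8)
The plan is to deduce the statement from \cite[Theorem~1]{Da08} after one trivial reduction; I also indicate how one could reprove it with the machinery already assembled here. First, since $\dbZ^2$ is torsion free we have $\efin{\dbZ^2}=E\dbZ^2$, so the group in the statement is $H_n^{\dbZ^2}(\evc{\dbZ^2},E\dbZ^2;\dbK_R)$, which equals $Wh_n^R(\dbZ^2)$ by the definition of the Whitehead groups adopted above; and the displayed formula is exactly the case $G=\dbZ^2$ of Davis's theorem, so strictly speaking nothing more is needed.

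If one wanted a self-contained argument, I would proceed as follows. Using the splitting of \cite{Ba03} together with the Farrell--Jones conjecture (which holds for $\dbZ^2$), one obtains $K_n(R[\dbZ^2])\cong H_n^{\dbZ^2}(E\dbZ^2;\dbK_R)\oplus Wh_n^R(\dbZ^2)$; since $E\dbZ^2/\dbZ^2=T^2$ splits stably as a wedge of spheres, $H_n^{\dbZ^2}(E\dbZ^2;\dbK_R)\cong K_n(R)\oplus 2K_{n-1}(R)\oplus K_{n-2}(R)$, and this summand is carried isomorphically by the geometric assembly map. One then evaluates the left-hand side by applying the Bass--Heller--Swan theorem twice to $R[\dbZ^2]=R[\dbZ][\dbZ]$ and cancels the $K_*(R)$-summands; this presents $Wh_n^R(\dbZ^2)$ as a direct sum of finitely many copies of $NK_n(R)$, of $NK_{n-1}(R)$, and of the iterated Nil-group $N^2K_n(R):=\mathrm{coker}\bigl(NK_n(R)\to NK_n(R[x])\bigr)$.

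The genuinely nontrivial step --- and the one I expect to be the main obstacle in such an account --- is to identify $N^2K_*(R)$ with a countable direct sum of ordinary Nil-groups $NK_*(R)$, which is what converts the previous paragraph into the stated form $\bigoplus_{j=0}^{\infty}\bigl(2NK_n(R)\oplus 2NK_{n-1}(R)\bigr)$. This identification is part of the content of \cite[Theorem~1]{Da08}, obtained there through a careful analysis of a model for $\evc{\dbZ^2}$ built from $E\dbZ^2$ by attaching one piece for each of the countably many maximal infinite cyclic subgroups of $\dbZ^2$ (equivalently, one for each rational slope), in the spirit of the computation of $H_n^{\dbZ}(\dbE,pt)$ recorded after \cite[Lemma~3.1]{DQR11}. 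Since the present paper uses the statement only as an input, citing \cite[Theorem~1]{Da08} is the efficient route.
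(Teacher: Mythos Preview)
Your proposal is correct and matches the paper's own treatment: the paper does not give an independent proof but simply records the statement as a particular case of \cite[Theorem~1]{Da08}, exactly as you do in your first paragraph. Your additional sketch of a self-contained argument is extra content the paper does not attempt.
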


The computation of the relative term in the case of the Klein bottle group is given by the following proposition.

\begin{proposition}\label{prop:relative:term:K}
Let $\calK=\kleingp$ the fundamental group of the Klein bottle. Then we have the following isomorphisms

\[
Wh^R_i(\calK)\cong H^\calK_i(\underline{\underline{E}}\calK, \underline{E}\calK) \cong  \bigoplus_{j=0}^{\infty}( NK_i(R)\oplus NK_{i-1}(R) ).
\]

\end{proposition}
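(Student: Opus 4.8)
The plan is to identify the relative term with the Whitehead group $Wh^R_i(\calK)$ and then compute that group via the twisted Bass--Heller--Swan decomposition coming from the fibration of the Klein bottle over the circle, in close parallel with the treatment of $\dbZ^2$ in \cref{prop:relative:term:Z2}. First note that $\calK$ is torsion free (it is a $2$-dimensional Bieberbach group; equivalently the Klein bottle is aspherical, cf. \cite[Theorem~1.2.2]{Mo05}), so $\underline{E}\calK=E\calK$ and hence $H^\calK_i(\underline{\underline{E}}\calK,\underline{E}\calK)=H^\calK_i(\underline{\underline{E}}\calK,E\calK)$, which is by definition $Wh^R_i(\calK)$. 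Thus the statement is an assertion about $Wh^R_i(\calK)$.

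Next, the Klein bottle fibers over $S^1$ with fiber $S^1$ and monodromy $-1$, so $\calK\cong N\rtimes_\phi\dbZ$ with $N=\dbZ$ and $\phi$ the order-two automorphism $x\mapsto x^{-1}$; here $N$ is precisely the kernel of the corresponding surjection $\calK\to\dbZ$. The twisted Bass--Heller--Swan formula for Whitehead groups recalled in \cref{prel} gives
\[
Wh^R_i(\calK)\;\cong\;Wh^R_i(\dbZ)\oplus Wh^R_{i-1}(\dbZ)\oplus 2NK_i(R[\dbZ];\phi).
\]
Since $\dbZ$ is virtually cyclic and torsion free, $Wh^R_i(\dbZ)=H^\dbZ_i(\mathrm{pt},E\dbZ;\dbK_R)$, and the long exact sequence of this pair together with $H_*(B\dbZ;\dbK_R)\cong K_*(R)\oplus K_{*-1}(R)$ and the untwisted Bass--Heller--Swan theorem yields $Wh^R_i(\dbZ)\cong 2NK_i(R)$, whence
\[
Wh^R_i(\calK)\;\cong\;2NK_i(R)\oplus 2NK_{i-1}(R)\oplus 2NK_i(R[\dbZ];\phi).
\]
The same reduction can be reached from the amalgam presentation $\calK\cong\dbZ\ast_{\dbZ}\dbZ$ over the central $\dbZ$, using the Mayer--Vietoris sequences and the identifications of relative terms of \cref{subsubsectio:relative:terms} (there the index-two subgroup $\overline{\calK}$ is $\dbZ^2$, and $\phi=\mathrm{id}$ on $N$).

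It remains to evaluate the Farrell--Hsiang twisted Nil group $NK_i(R[\dbZ];\phi)$ for the inversion automorphism and to reorganize the outcome into the asserted countable direct sum. I would argue as in the untwisted $\dbZ^2$ computation behind \cref{prop:relative:term:Z2}: a further Bass--Heller--Swan analysis of the twisted Laurent ring $R[\dbZ]_\phi[t,t^{-1}]$, exploiting that $\phi^2=\mathrm{id}$ so that after passing to an untwisted Laurent subring one is reduced to ordinary (iterated) Nil groups, or directly the computations of \cite{Da08}, \cite{DKR11}, \cite{DQR11}, expresses $NK_i(R[\dbZ];\phi)$ in terms of the ordinary Nil groups $NK_*(R)$. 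As always for Laurent rings these iterated-Nil contributions, being countably infinitely generated whenever nonzero, coalesce into a countable direct sum of copies of $NK_i(R)$ and $NK_{i-1}(R)$, which then absorbs the three explicit summands above; since in a countable direct sum a copy of $2NK_*(R)$ is the same as a copy of $NK_*(R)$, one obtains $Wh^R_i(\calK)\cong\bigoplus_{j=0}^{\infty}\bigl(NK_i(R)\oplus NK_{i-1}(R)\bigr)$.

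Everything outside the last step is bookkeeping; the content is the identification of $NK_i(R[\dbZ];\phi)$. The main obstacle is that the twist is genuinely present ($R[\dbZ]_\phi[t,t^{-1}]$ is noncommutative over $R[\dbZ]$), so the ordinary Bass--Heller--Swan formula does not apply verbatim: one must either extract the precise statement from the Farrell--Hsiang / Davis--Khan--Ranicki machinery or run the twisted computation by hand, and then be careful with the reindexing so that the finitely many lower correction terms, and the ever-present doubling, are absorbed correctly into the countable direct sum. (An alternative route, bypassing the twist, is to feed one of the explicit models for $\underline{\underline{E}}\calK$ built in \cite{JLSS} into the relative Atiyah--Hirzebruch spectral sequence of \cref{cor:whitehead:ss}; this still reduces to the same Nil-group identification.)
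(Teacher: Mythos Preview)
Your route is genuinely different from the paper's, and the gap you yourself flag in the last paragraph is real and is not closed by what you wrote.

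Two issues. First, your two reductions are not ``the same''. In the semidirect-product picture $\calK=\dbZ\rtimes_\phi\dbZ$ the kernel is $\langle a\rangle$ and $\phi$ is inversion, so the relevant Nil is the \emph{twisted} $NK_i(R[\dbZ];\phi)$. In the amalgam picture $\calK\cong\dbZ\ast_\dbZ\dbZ$ surjecting onto $D_\infty$, the kernel is the \emph{central} $\dbZ=\langle b^2\rangle$, the index-two subgroup is $\dbZ^2$, and the Nil showing up is the \emph{untwisted} $NK_i(R[\dbZ])$; but you then also carry an unresolved Mayer--Vietoris term $H_i^\calK(E_\calF\calK,E\calK)$. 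These are different decompositions of the same group, and conflating them does not help. Second, the ``coalesce'' step is not an argument. Farrell's theorem that nonzero Nil groups are not finitely generated does not say that $NK_i(R[\dbZ];\phi)$ is abstractly an infinite direct sum of copies of $NK_i(R)$ and $NK_{i-1}(R)$; a priori iterated Nil pieces such as $N^2K_i(R)$ could appear, and the twist adds further difficulty. In \cite{Da08} the corresponding statement for $\dbZ^2$ is proved using the Witt-vector module structure on $NK$; you would have to invoke and adapt that, and for the inversion twist this is not in the references you cite.

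The paper avoids all of this by taking precisely the ``alternative route'' you mention parenthetically at the end: it uses an explicit homotopy $\calK$-pushout model for $\underline{\underline{E}}\calK$ taken from \cite{JPTN18} (not \cite{JLSS}). That pushout exhibits $H_i^\calK(\underline{\underline{E}}\calK,\underline{E}\calK)$ directly as a direct sum of three kinds of pieces, one of which is indexed by an infinite countable set $\calI$ of conjugacy classes of maximal cyclic subgroups of $\calK$. Each individual piece is then computed, via \cref{cor:whitehead:ss} or an argument as in \cite{JPL06}, to be a \emph{finite} sum of $NK_i(R)$ and $NK_{i-1}(R)$; the countably infinite direct sum in the statement comes from the infinite index set $\calI$, not from any internal infinite-generation of a single Nil group. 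No twisted or iterated Nil group is ever confronted. That is the payoff of the geometric approach over yours: it replaces the algebra of twisted Laurent extensions by the elementary combinatorics of cyclic subgroups of $\calK$.
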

\begin{proof}
In order to compute $H^\calK_*(\underline{\underline{E}}\calK, \underline{E}\calK)$, we will use the model for $\underline{\underline{E}}\calK$ described in \cite[Section~4.2]{JPTN18}. Such a model is given by the following $\calK$-(homotopy)-pushout:

\[\xymatrix{ E\calK \coprod E\calK \coprod_{R\in\calI}\calK\times_{\calK}E\dbZ^2 
\ar[d] \ar[r]  & \underline{E}\calK=E \calK \ar[d]\\
E_{SUB(H)}\calK \coprod E_{\calG[K]}\calK \coprod_{R\in\calI}\calK\times_{\calK}E_{SUB(R)}\dbZ^2 \ar[r] & \evc \calK
}
\]

where
\begin{enumerate}
    \item $H$ is the subgroup of $\calK$ generated by $(1,0)\in \calK$.
    \item $K$ is the subgroup of $\calK$ generated by $(0,2)\in \calK$.
    \item $\calI$ is a set of representatives of conjugacy classes of maximal cyclic subgroups of $\calK$ generated by elements of the form $(n,2m)$. In particular $\calI$ is an infinite numerable set.
    \item $SUB(H)$ is the family of all subgroups of $H$. 
    \item $\calG[K]$ is the family consisting of all cyclic subgroups $L$ of $\calK$ such that either $L$ is trivial of $L\cap K$ is infinite.
    \item The left vertical arrow is the disjoint union of the maps $E\calK\to E_{SUB(H)}\calK $, $E\calK \to E_{\calG[K]} \calK$, and $\calK\times_{\calK}E\dbZ^2 \to \calK\times_{\calK}E_{SUB(R)}\dbZ^2$.
\end{enumerate}

As an immediate consequence $H_i^{\calK}(\underline{\underline{E}}\calK, \underline{E}\calK)$ is isomorphic to 

\[
H_i^\calK(E_{SUB(H)}\calK,E\calK)\oplus  H^\calK_i(E_{\calG[K]} \calK, E\calK) \oplus \bigoplus_{R\in\calI}H_i^{\dbZ^2}(E_{SUB(H)}\dbZ^2,E\dbZ^2).
\]
 To finish the proof we will describe each of these factors. 
 Let $X=\{K_n\leq \calK\colon K_n=\langle (n,1)\rangle,n\in \dbZ \}$ endowed with the discrete topology. Note that $\calK$ acts on $X$ by conjugation and the isotropy group of $K_n\in X$ is the normalizer $N_\calK (K_n)$ which is isomorphic to $\dbZ$ (see \cite[p.~354]{JPTN18}).
By \cite[p.~354]{JPTN18}, a model for $E_{\calG[K]}\calG$ is given by the join $X*E\calK$ endowed with the diagonal $\calK$-action. Therefore, a computation completely analogous to that in \cite[Section~5]{JPL06} yields the following isomorphism
\[
H_i^\calK(E_{\calG[K]}\calK, E\calK)\cong \bigoplus_{X/G} 2NK_i(R).
\]
Since the circle $S^1$ is a model for $E_{SUB(H)}\calK$ and $E_{SUB(H)}\dbZ^2$, an application of the spectral sequence described in \Cref{cor:whitehead:ss} leads to the following isomorphisms

\begin{align*}
    H_i^\calK(E_{SUB(H)}\calK,E\calK) &\cong Wh^R_i(\dbZ)\oplus Wh_{i-1}^R(\dbZ)\\
    &\cong 2NK_i(R)\oplus 2NK_{i-1}(R),
\end{align*}
and analogously
\[
H_i^{\dbZ^2}(E_{SUB(H)}\dbZ^2,E\dbZ^2) \cong 2NK_i(R) \oplus 2NK_{i-1}(R).
\]

\end{proof}


\subsection{The Whitehead groups of $\dbZ^2\rtimes \dbZ$ and $\calK \ast_{\dbZ^2}\calK$}

Before we describe the Whitehead groups of Seifert fibered 3-manifolds with flat base orbifold, we describe the Whitehead groups of the form $\dbZ^2\rtimes \dbZ$ and $\calK \ast_{\dbZ^2}\calK$. The reason is that the fundamental groups of closed Seifert fibered 3-manifolds with flat base orbifold essentially have this algebraic description. Actually, the fundamental groups of manifolds modeled on $\mathrm{Sol}$ also have these forms.

The following proposition is a straightforward consequence of the results in \cref{subsubsectio:relative:terms}, \cref{prop:relative:term:Z2}, and \cref{prop:relative:term:K}.

\begin{proposition}\label{relative:term:virtually:solvable:groups}
Let $G$ be a group. 
\begin{enumerate}
    \item  If $G$ isomorphic to the semidirect product $\dbZ^2\rtimes_\phi \dbZ$ with $\phi$ an automorphism of $\dbZ^2$, then for all $n\in \dbZ$ we get
\[
Wh^R_n(G)\cong \bigoplus_{j=0}^{\infty}\Big(2NK_i(R)\oplus 4NK_{i-1}(R) \oplus 2NK_{i-2}(R)\Big)\oplus 2NK_n(\dbZ^2; \phi).
\]

    \item If $G$ isomorphic to the amalgamated product $\calK\ast_{\dbZ^2} \calK$ with $\dbZ^2$ embedded in each copy of $\calK$ as a subgroup of index 2. Let let $\calF$ be the smallest family of $G$ containing the two copies of $\calK$. Then for all $n\in \dbZ$ we get
\[
Wh^R_n(G)\cong H_n^G(E_\calF G, EG) \oplus NK_n(\dbZ^2; \phi).
\]
and the term $H_n^G(E_\calF G, EG)$ fits in the following long exact sequence
\begin{align*}
\cdots \to\bigoplus_{j=0}^{\infty}(2NK_i(R)\oplus & 2NK_{i-1}(R))\to  2\bigoplus_{j=0}^{\infty}( NK_i(R)\oplus NK_{i-1}(R) )\to \\
\to & H_n^G(E_\calF G, EG)\to \cdots .
\end{align*}
\end{enumerate}
\end{proposition}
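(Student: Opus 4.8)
The plan is to deduce both statements directly from the general machinery assembled in \cref{subsubsectio:relative:terms}, feeding in the two explicit computations \cref{prop:relative:term:Z2} (the relative term for $\dbZ^2$) and \cref{prop:relative:term:K} (the relative term for $\calK$). The only input beyond those two propositions is that the groups involved satisfy the Farrell--Jones conjecture: both $\dbZ^2\rtimes_\phi\dbZ$ and $\calK\ast_{\dbZ^2}\calK$ occur in the list of \cref{EvansMoserSolvable}, hence are $3$-manifold groups and satisfy Farrell--Jones by \cref{FJvalid} (for part (1) this is not even needed, as the twisted Bass--Heller--Swan theorem quoted in \cref{subsubsectio:relative:terms} is unconditional).

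For part (1) I would argue as follows. The group $G\cong\dbZ^2\rtimes_\phi\dbZ$ surjects onto $\dbZ$ with kernel $N=\dbZ^2$, so the twisted Bass--Heller--Swan decomposition for Whitehead groups recalled in \cref{subsubsectio:relative:terms} gives
\[
Wh^R_n(G)\cong Wh^R_n(\dbZ^2)\oplus Wh^R_{n-1}(\dbZ^2)\oplus 2NK_n(\dbZ^2;\phi).
\]
Then I would substitute \cref{prop:relative:term:Z2}, once for $Wh^R_n(\dbZ^2)$ and once for $Wh^R_{n-1}(\dbZ^2)$ (the latter with the evident index shift), and reindex the resulting countable direct sums; the first two summands collapse to $\bigoplus_{j=0}^{\infty}\big(2NK_n(R)\oplus 4NK_{n-1}(R)\oplus 2NK_{n-2}(R)\big)$, which is the stated formula.

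For part (2) I would first record the group-theoretic setup: $G\cong\calK\ast_{\dbZ^2}\calK$ surjects onto $D_\infty=\dbZ/2\ast\dbZ/2$ with kernel $N=\dbZ^2$, the two copies of $\calK$ being the preimages $G_1,G_2$ of the two $\dbZ/2$-factors, and correspondingly $G$ has an index-two subgroup $\barG\cong\dbZ^2\rtimes_\phi\dbZ$, where $\phi$ is the automorphism appearing in the statement. With $\calF$ the smallest family of $G$ containing $G_1$ and $G_2$, the $D_\infty$-part of \cref{subsubsectio:relative:terms} yields the splitting
\[
Wh_n^R(G)\cong H_n^G(E_\calF G, EG)\oplus H_n^G(\evc G, E_\calF G),
\]
together with $H_n^G(\evc G, E_\calF G)\cong NK_n(R[\dbZ^2];R[G_1-N],R[G_2-N])\cong NK_n(\dbZ^2;\phi)$, which gives the first asserted isomorphism. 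The same subsection supplies the Mayer--Vietoris long exact sequence
\[
\cdots\to Wh^R_n(\dbZ^2)\to Wh^R_n(\calK)\oplus Wh^R_n(\calK)\to H_n^G(E_\calF G, EG)\to\cdots,
\]
and substituting \cref{prop:relative:term:Z2} and \cref{prop:relative:term:K} rewrites its first two terms as $\bigoplus_{j=0}^{\infty}(2NK_n(R)\oplus 2NK_{n-1}(R))$ and $2\bigoplus_{j=0}^{\infty}(NK_n(R)\oplus NK_{n-1}(R))$ respectively, yielding the long exact sequence in the statement.

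Since this is billed as a straightforward consequence of the quoted results, I do not expect a real obstacle; the only things to watch are (i) that the hypotheses of the $D_\infty$-machinery are met, namely that $\dbZ^2$ is indeed normal in $G$ with $\barG$ the asserted semidirect product, so that $\phi$ is well defined (this is exactly the content of the last bullet of \cref{EvansMoserSolvable}), and (ii) the cosmetic task of absorbing countably infinite direct sums so that the index shifts in parts (1) and (2) line up with the displayed formulas.
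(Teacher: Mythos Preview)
Your proposal is correct and matches the paper's own argument exactly: the paper states (without further proof) that the proposition is a straightforward consequence of the results in \cref{subsubsectio:relative:terms} together with \cref{prop:relative:term:Z2} and \cref{prop:relative:term:K}, which is precisely the derivation you outline.
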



\subsection{Seifert fibered case with flat orbifold base and empty boundary}
Let $M$ be a Seifert fibered manifold with flat base orbifold, and let $\Gamma$ be its fundamental group. By \cite[Theorem~1.2.2]{Mo05} $M$ is modeled on either $\dbE^3$ or $\mathrm{Nil}$. Thus the description of $H^{\Gamma}_n(\evc{\Gamma},E\Gamma)=Wh_n^R(\Gamma)$ can be seen as a particular case of \cref{relative:term:virtually:solvable:groups}.


\subsection{Seifert fibered case with hyperbolic orbifold base}\label{section:seifert:hyperbolic}
\newcommand{\mc}[1]{\mathcal{#1}}
\newcommand{\hyp}{\mathbb{H}^2}

Before we provide a description for $H_n^{\Gamma}(\underline{\underline{E}} \Gamma, \underline{E}\Gamma)=H_n^{\Gamma}(\underline{\underline{E}} \Gamma, E\Gamma)$, we need to state some notation and preliminary results.

Let $M$ be a Seifert fibered space with base orbifold $B$ modeled on
$\hyp$.  Let $\Gamma=\pi_1(M)$ and $\Gamma_0=\pi_1(B)$ be the respective
fundamental groups, and let $K$ be the infinite cyclic subgroup of $\Gamma$ generated by a regular fiber of $M$. Thus we have the short exact sequence

\[1\to K \to \Gamma \to \Gamma_0 \to 1.\]

Let $\mc{A}$ be the collection of maximal infinite virtually cyclic
subgroups of $\Gamma_0$, let $\widetilde{\mc{A}}$ be the collection of preimages
of $\mc{A}$ in $\Gamma$, and let $\mc{H}$ be a set of representatives of
conjugacy classes in $\widetilde{\mc{A}}$. For an element $H\in \mathcal A$ we denote by $\tilde H$ the corresponding element in $\widetilde{\mc{A}}$. Let $\calF'$ be the smallest family that contains all infinite cyclic subgroups of $\Gamma$ that map to a finite group in $\Gamma_0$.

Consider the following homotopy
cellular homotopy $\Gamma$-push-out:

\begin{equation}\label{pushout:Seifert:hyperbolic}
\xymatrix{\coprod_{\tilde{H}\in\calH}\Gamma\times_{\tilde{H}}E_{\tilde H\cap\calF'}\tilde H \ar[d] \ar[r]  & E_{\calF'}\Gamma \ar[d]\\
\coprod_{\tilde{H}\in\calH}\Gamma\times_{\tilde{H}}\evc\tilde{H}\ar[r] & X
}
\end{equation}
Then by \cite[Proposition~5.12~and~Proposition~5.7]{JLSS} $X$ is a model for $\evc\Gamma$. 

Next, we need extra information about the terms involved in the above $\Gamma$-pushout.

\begin{lemma}\label{lemma:pushout:Seifert:hyp}
With the notation as above. The following statements are true:

\begin{enumerate}
    \item The hyperbolic plane is a model for $E_{\calF'}\Gamma$, and the quotient space $\dbH^2/\Gamma=\dbH^2/\Gamma_0$ is the base orbifold $B$.
    
    \item Each $\tilde H\in \tilde\calA$ is isomorphic to either $\dbZ^2$ or $\calK$.
    
    \item The family $\tilde H\cap \calF'$ is the family $SUB(K)$ consisting of the subgroups of $K$.
\end{enumerate}
\end{lemma}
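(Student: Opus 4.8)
The plan is to carry everything over to the geometry of the base orbifold $B$. Write $q\colon\Gamma\to\Gamma_0$ for the quotient map, so $\ker q=K$, and recall that, since $B$ is modeled on $\dbH^2$, the group $\Gamma_0$ acts properly, cocompactly and isometrically on $\dbH^2$ with $\dbH^2/\Gamma_0=B$ and with every point stabilizer a finite cyclic rotation group (the local group of a cone point of $B$); recall also that $M$ is then modeled on $\dbH^2\times\dbE$ or $\widetilde{PSL}_2(\dbR)$, hence aspherical, so $\Gamma$, and therefore every $\tilde H$, is torsion free. Let $\Gamma$ act on $\dbH^2$ through $q$. For part (1): the action factors through $\Gamma_0$, so $\dbH^2/\Gamma=\dbH^2/\Gamma_0=B$; and for each $x\in\dbH^2$ the stabilizer $\stab_\Gamma(x)=q^{-1}(\stab_{\Gamma_0}(x))$ is the preimage of a finite cyclic group, hence (torsion free and virtually $\dbZ$) infinite cyclic, and it surjects onto the finite group $\stab_{\Gamma_0}(x)$, so it lies in $\calF'$. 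If $L\in\calF'$ then $L$ is subconjugate to an infinite cyclic subgroup with finite image in $\Gamma_0$, so $q(L)$ is finite and $(\dbH^2)^L=(\dbH^2)^{q(L)}$ is either all of $\dbH^2$ (if $q(L)=1$) or the fixed set of a finite group of isometries of $\dbH^2$, i.e. a point or a geodesic; in all cases it is contractible, so $\dbH^2$ is a model for $E_{\calF'}\Gamma$.

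For part (2), fix $H\in\calA$ and put $\tilde H=q^{-1}(H)$. Choose an infinite cyclic subgroup $\langle t\rangle\le H$ of index at most $2$; then $q^{-1}(\langle t\rangle)$ is an extension of $\dbZ$ by $K\cong\dbZ$, hence isomorphic to $\dbZ^2$ or $\calK$, and it has finite index in $\tilde H$. So $\tilde H$ is a torsion free virtually-$\dbZ^2$ group, and the only such groups are $\dbZ^2$ and $\calK$, the fundamental groups of the two closed flat surfaces. (Alternatively one realizes $\tilde H$ as the fundamental group of the Seifert fibered submanifold of $M$ lying over a regular neighborhood of the closed $1$-suborbifold of $B$ carrying $H$; that submanifold is a compact Seifert fibered $3$-manifold with nonempty boundary over a compact Euclidean $2$-orbifold, so \cref{prop:bdry:flat:orb} applies directly.)

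For part (3), I would first record the description $\calF'=\{L\le\Gamma:\ q(L)\text{ is finite}\}$: the inclusion ``$\subseteq$'' is immediate, and for ``$\supseteq$'' a subgroup with finite image in $\Gamma_0$ is torsion free with a finite-index subgroup contained in $K\cong\dbZ$, hence is itself infinite cyclic (or trivial) and so is already one of the generators of $\calF'$. Consequently $\tilde H\cap\calF'=\{L\le\tilde H:\ q(L)\le H\text{ is finite}\}$; since $K=\ker(\tilde H\to H)$, the inclusion $SUB(K)\subseteq \tilde H\cap\calF'$ is clear, and it remains to prove that every $L\le\tilde H$ with finite image in $H$ is contained in $K$. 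When $H\cong\dbZ$ this is immediate, since a subgroup of $\dbZ$ with finite image is trivial, and this is the case that occurs for all but finitely many conjugacy classes in $\calA$. The remaining possibility $H\cong D_\infty$ — which by part (2) forces $\tilde H\cong\calK$ — is the delicate one, and it is exactly here that the specific construction of $\evc\Gamma$ in \cite{JLSS} enters: for the Seifert structure on the relevant piece one must check that the infinite cyclic subgroups of $\tilde H$ projecting onto a finite subgroup of $H$ are precisely those inside the fiber subgroup $K$.

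I expect the main obstacle to be precisely this last point: controlling, in the twisted-$I$-bundle-over-the-Klein-bottle case $\tilde H\cong\calK$, which virtually cyclic subgroups of $\tilde H$ belong to $\calF'$. Everything else amounts to a routine translation between the algebraic data $(\Gamma,\Gamma_0,K)$ and the orbifold geometry of $B$.
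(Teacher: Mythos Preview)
Your arguments for parts (1) and (2) are correct and align with the paper's. For (1) the paper is terser: it notes that $\calF'$ is the pullback along $q$ of the family of finite subgroups of $\Gamma_0$, so any model for $\efin\Gamma_0$ --- here $\dbH^2$ --- becomes a model for $E_{\calF'}\Gamma$ once $\Gamma$ acts through $q$; your direct verification of isotropy and fixed sets simply unpacks this. For (2) the paper takes the geometric route you offer as an alternative: $H$ stabilizes a geodesic $\gamma\subset\dbH^2$, its preimage in $\tilde M$ is a flat on which $\tilde H$ acts properly cocompactly, so $\tilde H$ is torsion-free $2$-crystallographic and hence $\dbZ^2$ or $\calK$.

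For part (3) the paper actually does \emph{less} than you. Its entire argument is: by part (2), $\tilde H$ sits in a short exact sequence $1\to K\to\tilde H\to\dbZ\to 1$, and then the only subgroups of $\tilde H$ with finite image in $\dbZ$ are those contained in $K$. That is precisely your case $H\cong\dbZ$; the possibility $H\cong D_\infty$ that you single out is neither mentioned nor handled --- the paper simply writes the quotient as $\dbZ$. So the ``main obstacle'' you anticipate is not resolved by the paper's proof either; if $H\cong D_\infty$ genuinely occurs (it does whenever a closed geodesic of $B$ passes through an order-$2$ cone point, and then $\tilde H\cap\calF'$ strictly contains $SUB(K)$), the statement of (3) would need adjusting. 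As far as matching the paper goes, you have reproduced its argument and correctly located the point where it is incomplete.
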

\begin{proof}
Let us prove the first statement. The family $\calF'$ is the pull-back family in $\Gamma$ of the family of finite subgroups of $\Gamma_0$. Since $\underline{E}\Gamma_0=\dbH^2$, we get that the hyperbolic plane is a model for $E_{\calF'}\Gamma$, and the quotient space $\dbH^2/\Gamma=\dbH^2/\Gamma_0$ is the base orbifold $B$.

Let us now prove the second statement. Since $\Gamma$ is torsion-free, every $\tilde H$ is also torsion free. Moreover, each $H\in \mathcal A$ acts as a hyperbolic isometry on $\dbH^2$ thus there is unique geodesic $\gamma$ upon which $H$ is acting. Therefore $\tilde H$ acts by isometries on the preimage of $\gamma$ in $\tilde M$ which turns out to be a flat. Therefore     each $\tilde H$ is a torsion-free 2-crystallographic group. Now the claim follows by the theorem of classification of closed surfaces.

For the third statement, let us note that, by part 2 of the lemma, $\tilde H$ fits in the long exact sequence
\[1\to K\to \tilde H\to \dbZ \to 1.\]
Now it is easy to see that the only subgroups of $\Gamma$ that map to a finite group of $\Gamma_0$ are exactly the subgroups of $K$.
\end{proof}

\begin{theorem}\label{thm:rel:term:hyperbolic}
With the notation as above. The relative term $H_n^{\Gamma}(\underline{\underline{E}} \Gamma, \underline{E}\Gamma)=H_n^{\Gamma}(\underline{\underline{E}} \Gamma, E\Gamma)$ fits in the long exact sequence
\[\cdots \to H_n^\Gamma(E_{\calF'}\Gamma,E\Gamma) \to H_n^\Gamma(\underline{\underline{E}}\Gamma,E\Gamma)\to \bigoplus_{\tilde H\in \tilde \calA}H_n^{\tilde H}(\underline{\underline{E}}\tilde H, E_{SUB(K)}\tilde H) \to \cdots \]
where each $\tilde H$ is either isomorphic to $\dbZ^2$ or to $\calK$.
\end{theorem}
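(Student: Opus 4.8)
The plan is to exploit the homotopy $\Gamma$-pushout \eqref{pushout:Seifert:hyperbolic} for the model $X \simeq \evc\Gamma$ together with the relative terms available for its building blocks, following the same pattern used in the proofs of \cref{reltermprimedecomposition} and \cref{reltermjsjdecomposition}. First I would observe that the pushout \eqref{pushout:Seifert:hyperbolic} describes $\evc\Gamma$ as glued from $E_{\calF'}\Gamma$ and the pieces $\Gamma\times_{\tilde H}\evc\tilde H$ along the subspaces $\Gamma\times_{\tilde H}E_{\tilde H\cap\calF'}\tilde H$. By \cref{lemma:pushout:Seifert:hyp}(1) the space $E_{\calF'}\Gamma$ is the hyperbolic plane $\dbH^2$, and since $E\Gamma \to E_{\calF'}\Gamma$ is a $\Gamma$-map of models, we can form the triple $(\evc\Gamma, E_{\calF'}\Gamma, E\Gamma)$; note $\fin\subseteq\calF'\subseteq\vcyc$ because every infinite cyclic subgroup mapping to a finite subgroup of $\Gamma_0$ is contained in $K$, hence is virtually cyclic, and all finite subgroups of $\Gamma$ are trivial by torsion-freeness. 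The long exact sequence of this triple gives
\[
\cdots \to H_n^\Gamma(E_{\calF'}\Gamma, E\Gamma) \to H_n^\Gamma(\evc\Gamma, E\Gamma) \to H_n^\Gamma(\evc\Gamma, E_{\calF'}\Gamma) \to H_{n-1}^\Gamma(E_{\calF'}\Gamma, E\Gamma) \to \cdots,
\]
so it remains to identify $H_n^\Gamma(\evc\Gamma, E_{\calF'}\Gamma)$ with $\bigoplus_{\tilde H\in\tilde\calA} H_n^{\tilde H}(\evc\tilde H, E_{SUB(K)}\tilde H)$.

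The second step carries out that identification using excision for the $\Gamma$-pushout \eqref{pushout:Seifert:hyperbolic}, exactly as in the pushout computations in \cref{reltermprimedecomposition}. Applying $H_*^\Gamma(-;\dbK_R)$ to the pushout and using that $E_{\calF'}\Gamma$ is one of the corners, we get
\begin{align*}
H_n^\Gamma(\evc\Gamma, E_{\calF'}\Gamma) &\cong H_n^\Gamma\Big(\coprod_{\tilde H\in\calH}\Gamma\times_{\tilde H}\evc\tilde H,\ \coprod_{\tilde H\in\calH}\Gamma\times_{\tilde H}E_{\tilde H\cap\calF'}\tilde H\Big)\\
&\cong \bigoplus_{\tilde H\in\calH} H_n^{\tilde H}(\evc\tilde H,\ E_{\tilde H\cap\calF'}\tilde H),
\end{align*}
where the last step uses the induction isomorphism for the equivariant homology theory (the $G/H$-formula for $H^G_*(-;\dbK_R)$). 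By \cref{lemma:pushout:Seifert:hyp}(3) the family $\tilde H\cap\calF'$ is exactly $SUB(K)$, and since $\calH$ is a set of representatives of conjugacy classes in $\tilde\calA$ one may re-index the direct sum over $\tilde H\in\tilde\calA$ with no change (the summands depend only on the isomorphism type of the pair $(\tilde H, K)$, and $H^{\tilde H}_*$ is insensitive to replacing $\tilde H$ by a conjugate). By \cref{lemma:pushout:Seifert:hyp}(2) each $\tilde H$ is isomorphic to $\dbZ^2$ or $\calK$, giving the claimed endpoint of the sequence. Splicing this identification into the triple's long exact sequence produces precisely the stated long exact sequence.

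The routine points to check are that the relevant maps in the triple sequence and the pushout Mayer--Vietoris sequence are the ones induced by the structural inclusions (so that splicing is legitimate), and that the induction isomorphism applies to the disjoint-union/induced-space corners; these are standard properties of equivariant homology theories in the sense of \cite{LR05} and were already used in \cref{reltermprimedecomposition}. The one genuine subtlety — and the step I expect to need the most care — is the re-indexing from $\calH$ to $\tilde\calA$: one must be sure that distinct elements of $\tilde\calA$ in the same $\Gamma$-conjugacy class contribute canonically isomorphic summands and that no double counting occurs, i.e. that $\calH \to \tilde\calA$ really does parametrize the orbits of the $\Gamma$-set of pieces in the pushout. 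This follows from the construction of \eqref{pushout:Seifert:hyperbolic} in \cite{JLSS}, where $\calH$ is defined precisely as such a set of orbit representatives, so the argument goes through; but it is the place where the bookkeeping must be stated carefully rather than waved through.
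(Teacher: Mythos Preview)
Your proposal is correct and follows essentially the same route as the paper: the paper's proof is a one-line appeal to the pushout \eqref{pushout:Seifert:hyperbolic}, \cref{lemma:pushout:Seifert:hyp}, and the long exact sequence of the triple $(\evc\Gamma, E_{\calF'}\Gamma, E\Gamma)$, and you have simply unpacked those three ingredients with the appropriate excision/induction step for the pushout corner. Your caution about the indexing $\calH$ versus $\tilde\calA$ is well-placed (the paper's displayed sum over $\tilde H\in\tilde\calA$ should be read as a sum over the orbit representatives $\calH$), but this is a notational wrinkle rather than a mathematical gap.
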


\begin{proof}
The statement is a direct consequence of \eqref{pushout:Seifert:hyperbolic}, \cref{lemma:pushout:Seifert:hyp} and the long exact sequence of the triple $(\underline{\underline{E}}\Gamma, E_{\calF'}\Gamma, E\Gamma)$. 

\end{proof}

The previous theorem provides a description of $H_n^\Gamma(\underline{\underline{E}}\Gamma,E\Gamma)$ in terms of $H_n^\Gamma(E_{\calF'}\Gamma,E\Gamma)$ and the terms $H_n^{\tilde H}(\underline{\underline{E}}\tilde H, E_{SUB(K)}\tilde H)$. To finish this section we explain possible approaches to compute these terms.

The term $H_n^\Gamma(E_{\calF'}\Gamma,E\Gamma)$ can be computed using the spectral sequence of \cref{cor:whitehead:ss}, that is, we gave a spectral sequence that converges to $H_n^\Gamma(E_{\calF'}\Gamma,E\Gamma)$ and whose second page is given by
\[
E^2_{p,q}=H_p(\dbH^2/\Gamma_0; \{ \{ Wh_q^R((\Gamma_0)_\sigma) \} \})=H_p(B; \{ Wh_q^R((\Gamma_0)_\sigma) \}).
\]
Moreover, each $(\Gamma_0)_\sigma$ is isomorphic to $\dbZ$, therefore $Wh_q^R((\Gamma_0)_\sigma)\cong 2NK_q(R)$. 

The term $H_n^{\tilde H}(\underline{\underline{E}}\tilde H, E_{SUB(K)}\tilde H)$ fits in the long exact sequence
\begin{equation}\label{eq:H:tilde:sequence}\cdots \to H_n^{\tilde H}( E_{SUB(K)}\tilde H) \to H_n^{\tilde H}(\underline{\underline{E}}\tilde H)  \to H_n^{\tilde H}(\underline{\underline{E}}\tilde H, E_{SUB(K)}\tilde H) \to  \cdots  .
\end{equation}
Since $E_{SUB(K)}\tilde H)=E(H/K)=E\dbZ$, we get \begin{align}
H_n^{\tilde H}( E_{SUB(K)}\tilde H)&\cong K_n(R[K])\oplus K_{n-1}(R[K])\nonumber\\
&\cong K_n(R)\oplus 2 K_{n-1}(R)\oplus K_{n-2}(R)\oplus 2NK_n(R) \oplus 2NK_{n-1}(R)\label{eq:H:tilde:first:term}.
\end{align}
In case $\tilde H\cong \dbZ^2$, by \cite[Corollary~2]{Da08} we have the following isomorphism
\begin{equation}\label{eq:H:tilde:second:term}
K_n(R[\dbZ^2])\cong K_n(R)\oplus 2 K_{n-1}(R)\oplus K_{n-2}(R)\oplus \bigoplus_{i=0}^{\infty} (2NK_n(R)\oplus 2NK_{n-1}(R)).
\end{equation}

From \eqref{eq:H:tilde:sequence}, \eqref{eq:H:tilde:first:term}, and \eqref{eq:H:tilde:second:term} we get \[H_n^{\dbZ^2}(\underline{\underline{E}}\dbZ^2, E_{SUB(\dbZ\oplus 1)}\dbZ^2)\cong \bigoplus_{i=1}^{\infty} (2NK_n(R)\oplus 2NK_{n-1}(R)).\]

Sadly we do not have a description for $H_n^{\calK}(\underline{\underline{E}}\calK, E_{SUB(\dbZ\oplus 1)}\calK)$ as good as the above one, although we suspect this term should be very similar in nature as $H_n^{\dbZ^2}(\underline{\underline{E}}\dbZ^2, E_{SUB(\dbZ\oplus 1)}\dbZ^2)$.

\section{The relative term $H^{\Gamma}_*(\underline{\underline{E}}\Gamma, \underline{E}\Gamma; \dbK_R)$ for hyperbolic 3-manifolds} \label{reltermhyp}

Let $\Gamma$ be the fundamental group of a hyperbolic manifold $M$ with possibly non-empty boundary. Since $M$ is aspherical, we conclude that $\Gamma$ is torsion-free.

Let $\calA$ be the collection of infinite maximal subgroups $M_c$ that stabilize a
geodesic $c(\dbR)\subset\dbH$ and infinite maximal parabolic subgroups
$P_{\xi}$ that fix a unique boundary point $\xi\in\partial\dbH$. Let $\calH$ be a set of representative of conjugacy classes of maximal infinite virtually cyclic (actually cyclic since $\Gamma$ is torsion-free) subgroups of $\Gamma$. We identify the relative term as follows:
\begin{theorem}
Let $\Gamma$ be the fundamental group of a hyperbolic manifold with possibly non-empty boundary. Then, 
\begin{align*}
H^{\Gamma}_n(\evc{\Gamma},\efin{\Gamma})
\cong \bigoplus_{H\in\mathcal{H}\cap \mathcal{M}_c}2NK_n(R) \oplus \bigoplus_{H\in\mathcal{H}\cap P_{\xi}}
(2NK_n(R)\oplus 2NK_{n-1}(R)).
\end{align*}
\end{theorem}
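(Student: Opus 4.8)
The plan is to follow the pattern of \cref{reltermprimedecomposition} and \cref{thm:rel:term:hyperbolic}: exhibit $\evc\Gamma$ as a L\"uck--Weiermann $\Gamma$-pushout obtained from $\efin\Gamma$ by coning off the maximal infinite virtually cyclic subgroups, conclude that the relative term splits as a direct sum of \emph{local} relative terms indexed by the set $\calH$, and then evaluate each local term. Since $M$ is a complete, finite-volume hyperbolic $3$-manifold, $\Gamma$ acts freely, properly and isometrically on the contractible space $\dbH=\dbH^3$, so $\dbH$ is a model for $\efin\Gamma=E\Gamma$; when $\partial M\neq\emptyset$, $M$ has finitely many boundary tori, each contributing a maximal parabolic subgroup $P_\xi\cong\dbZ^2$. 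Because $\Gamma$ is discrete and torsion-free, every nontrivial element of $\Gamma$ is loxodromic or parabolic, so every maximal infinite virtually cyclic subgroup $H$ is either the infinite cyclic stabilizer $M_c$ of a geodesic axis $c(\dbR)\subset\dbH$, or a maximal cyclic subgroup $V$ of some $P_\xi$. Using that two infinite cyclic subgroups of $\Gamma$ are commensurable only when they stabilize the same axis, respectively fix the same boundary point, the commensurator of $H$ equals $H\cong\dbZ$ in the loxodromic case and equals $P_\xi$ in the parabolic case.

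With this data, the model for $\evc\Gamma$ constructed in \cite{JLSS} is a homotopy $\Gamma$-pushout
\[
\begin{CD}
\coprod_{H\in\calH}\Gamma\times_{N_\Gamma[H]}\efin N_\Gamma[H] @>>> \efin\Gamma\\
@VVV @VVV\\
\coprod_{H\in\calH}\Gamma\times_{N_\Gamma[H]}E_{\calF[H]}N_\Gamma[H] @>>> \evc\Gamma
\end{CD}
\]
where $\calF[H]$ is the family of subgroups of $N_\Gamma[H]$ that are finite or commensurable with $H$. Since this is a homotopy pushout, the induction and excision properties of $H^\Gamma_*(-;\dbK_R)$ give, just as in the pushout step of the proof of \cref{reltermprimedecomposition},
\[
H^\Gamma_n(\evc\Gamma,\efin\Gamma)\;\cong\;\bigoplus_{H\in\calH}H^{N_\Gamma[H]}_n\bigl(E_{\calF[H]}N_\Gamma[H],\,\efin N_\Gamma[H]\bigr).
\]

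It then remains to identify the two kinds of summand. If $H=M_c$ is loxodromic, then $N_\Gamma[H]=H\cong\dbZ$ and $\calF[H]$ is the family of all subgroups of $\dbZ$, so $E_{\calF[H]}N_\Gamma[H]$ is a point while $\efin N_\Gamma[H]=E\dbZ$; combining the Bass--Heller--Swan decomposition of $K_n(R[\dbZ])$ with $H^{\dbZ}_n(E\dbZ;\dbK_R)\cong K_n(R)\oplus K_{n-1}(R)$ gives $H^{\dbZ}_n(\mathrm{pt},E\dbZ)\cong 2NK_n(R)$ (cf.\ \cite[Lemma~3.1]{DQR11}). If $H=V$ is a maximal cyclic subgroup of $P_\xi\cong\dbZ^2$, then $N_\Gamma[H]=P_\xi$, $\calF[H]$ is $SUB(V)$ (the subgroups of the direct factor $V\cong\dbZ$), and $\efin N_\Gamma[H]=E\dbZ^2$; the real line with $\dbZ^2$ acting through the projection $\dbZ^2\to\dbZ^2/V\cong\dbZ$ is a one-dimensional model for $E_{SUB(V)}\dbZ^2$, so $H^{\dbZ^2}_n(E_{SUB(V)}\dbZ^2)\cong K_n(R[\dbZ])\oplus K_{n-1}(R[\dbZ])$ (as in \cref{section:seifert:hyperbolic}), and the long exact sequence of the pair $(E_{SUB(V)}\dbZ^2,E\dbZ^2)$ --- whose map $H^{\dbZ^2}_n(E\dbZ^2)\to H^{\dbZ^2}_n(E_{SUB(V)}\dbZ^2)$ is the canonical split injection --- identifies $H^{\dbZ^2}_n(E_{SUB(V)}\dbZ^2,E\dbZ^2)$ with $2NK_n(R)\oplus 2NK_{n-1}(R)$. (Equivalently, one can invoke \cref{corollary:kles:relative:terms} and observe that the ensuing connecting maps vanish because $\dbZ^2$ is abelian.)

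Finally, one assembles these contributions over $\calH=(\calH\cap\mathcal{M}_c)\sqcup(\calH\cap P_\xi)$ --- the first part being the conjugacy classes of geodesic stabilizers and the second the conjugacy classes of maximal cyclic subgroups lying in the $P_\xi$'s (which is empty when $\partial M=\emptyset$) --- to obtain the stated formula. The main technical point is the parabolic local term: one must identify the correct intermediate family $\calF[V]=SUB(V)$, a one-dimensional model for $E_{SUB(V)}\dbZ^2$, and the splitting of the connecting maps, so that precisely one copy of $2NK_n(R)\oplus 2NK_{n-1}(R)$ survives for each parabolic class; the remaining ingredients are geometric bookkeeping about the isometries of $\dbH^3$ together with a transcription of the pushout argument of \cref{reltermprimedecomposition}, with the model for $\evc\Gamma$ itself imported from \cite{JLSS}.
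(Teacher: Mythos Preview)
Your argument is correct, but it takes a genuinely different route from the paper's. The paper does not use the L\"uck--Weiermann pushout indexed by commensurability classes of infinite cyclic subgroups; instead it invokes \cite[Proposition~6.1]{JLSS}, whose pushout is indexed by a set $I$ of representatives of conjugacy classes in $\calA$ itself --- that is, by the geodesic stabilizers $M_c\cong\dbZ$ together with the \emph{full} maximal parabolic subgroups $P_\xi\cong\dbZ^2$. The local contribution at each $P_\xi$ is then the whole relative term $H_n^{\dbZ^2}(\evc\dbZ^2,\efin\dbZ^2)=Wh^R_n(\dbZ^2)$, which the paper reads off from \cref{prop:relative:term:Z2} as the countably infinite sum $\bigoplus^\infty(2NK_n(R)\oplus 2NK_{n-1}(R))$. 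Your decomposition instead produces infinitely many parabolic summands per cusp --- one for each maximal cyclic $V\leq P_\xi$, each with commensurator $P_\xi$ and local family $SUB(V)$ --- and you compute each one individually as a single $2NK_n(R)\oplus 2NK_{n-1}(R)$. The two decompositions agree after reindexing; yours matches the statement's indexing set $\calH$ more directly, while the paper's is shorter because it outsources the parabolic computation to \cref{prop:relative:term:Z2} rather than analysing $H_n^{\dbZ^2}(E_{SUB(V)}\dbZ^2,E\dbZ^2)$ by hand.

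Two small remarks. First, the pushout you attribute to \cite{JLSS} is really the general L\"uck--Weiermann construction; the model \cite[Proposition~6.1]{JLSS} actually used in the paper is the coarser one above. Second, your assertion that $H^{\dbZ^2}_n(E\dbZ^2)\to H^{\dbZ^2}_n(E_{SUB(V)}\dbZ^2)$ is split injective is correct but worth one line of justification: it follows because Bartels's splitting $H^{\dbZ^2}_n(E\dbZ^2)\hookrightarrow H^{\dbZ^2}_n(\evc\dbZ^2)$ factors through $H^{\dbZ^2}_n(E_{SUB(V)}\dbZ^2)$, and a map with split-injective composite is itself split injective.
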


\begin{proof}
Let $I$ be a
complete set of representatives of the conjugacy classes within $\calA$, and
consider the following cellular homotopy $\Gamma$-push-out:
\[
\xymatrix{\coprod_{H\in I}\Gamma\times_{H}\underline{E}H\ar[d] \ar[r]  & \underline{E}\Gamma=\dbH^3 \ar[d]\\
\coprod_{H\in I}\Gamma\times_{H}\evc H\ar[r] & X
}
\]
Then $X$ is a $3$-dimensional model for $\underline{\underline{E}}\Gamma$, see Proposition 6.1 in \cite{JLSS}.
The above gives the isomorphism
\begin{align*}
H^{\Gamma}_n(\evc{\Gamma},\efin{\Gamma})&\cong \bigoplus_I H_n^H(\underline{\underline{E}}H, \underline{E}H)\\
&\cong \bigoplus_{H\in\mathcal{H}\cap \mathcal{M}_c}2NK_n(R) \oplus \bigoplus_{H\in\mathcal{H}\cap P_{\xi}}
(2NK_n(R)\oplus 2NK_{n-1}(R)).
\end{align*}
and the last isomorphism follows from the fact that each $H$ is isomorphic either to $\dbZ$ or $\dbZ^2$, \cref{prop:relative:term:Z2} and the isomorphism $H_n^{\dbZ}(\underline{\underline{E}}\dbZ, \underline{E}\dbZ)\cong 2NK_n(R)$.

\end{proof}


\bibliographystyle{alpha} 
\bibliography{myblib}
\end{document}